\newtheorem{thm}[equation]{Theorem}
\newtheorem*{thm*}{Theorem}
\newtheorem{cor}[equation]{Corollary}
\newtheorem{lem}[equation]{Lemma}
\newtheorem{prop}[equation]{Proposition}
\newtheorem{conj}[equation]{Conjecture}
\newtheorem{hyp}[equation]{Hypothesis}
\theoremstyle{definition}
\newtheorem{rem}[equation]{Remark}
\newtheorem{defn}[equation]{Definition}
\newtheorem{defn-lem}[equation]{Definition-Lemma}
\newtheorem{rmk}[equation]{Remark}
\newtheorem*{rem*}{Remark}
\newtheorem{obsv}[equation]{Observation}
\newtheorem{sub}[equation]{ }
\DeclareMathOperator{\h}{H}
\def\R{\mathbb R}
\def\N{\mathbb N}
\def\Z{\mathbb Z}
\def\A{\mathbb A}
\def\Q{\mathbb Q}
\def\C{\mathbb C}
\def\CC{\mathbb CC}
\def\RR{\mathbb R}
\def\QQ{\mathbb Q}
\def\Qbar{\bar{\QQ}}
\def\CC{\mathbb C}
\def\Gm{{\mathbb G}_m}
\def\EE{\mathcal E}
\def\cF{\mathcal F}
\def\ira{\stackrel{\sim}{\longrightarrow}}
\def\hra{\hookrightarrow}
\def\ra{\rightarrow}
\def\g{\mathfrak g}
\def\q{\mathfrak q}
\def\O{\mathcal O}
\def\h{\mathfrak h}
\def\k{\mathfrak k}
\def\p{\mathfrak p}
\def\<{\langle}
\def\>{\rangle}
\def\W{\mathcal W}
\def\Acm{\mathbf{A}}
\newcommand{\Ss}{{\mathbb S}}
\def\Hom{{\rm Hom}}
\def\GL{{\rm GL}}
\def\CL{\mathcal{L}}
\def\CA{\mathcal{A}}
\def\CP{\mathcal{P}}
\def\CO{\mathcal{O}}
\def\cm{F}
\def\tr{F^{+}}
\def\Acm{\mathbb{A}_{F}}
\def\Atr{\mathbb{A}_{F^{+}}}
\def\Atrf{\mathbb{A}_{F^{+},f}}
\newcommand{\isoarrow}{{~\overset\sim\longrightarrow}}
\def\HC{A} 
\begin{document}

\title[]{Deligne's conjecture for automorphic motives over CM-fields}
\author{Harald Grobner, Michael Harris \& Jie Lin}
\thanks{H.G. is supported by START-prize Y-966 of the Austrian Science Fund (FWF) and the FWF Stand-alone research project P32333. M.H.'s research received funding from the European Research Council under the European Community's Seventh Framework Program (FP7/2007-2013) / ERC Grant agreement no. 290766 (AAMOT). M.H. was partially supported by NSF Grants DMS-1404769 and DMS-1701651.  This work was also supported by the National Science Foundation under Grant No. DMS-1440140 while M.H. was in residence at the Mathematical Sciences Research Institute in Berkeley, California, during the Spring 2019 semester. J.L. was supported by the European Research Council under the European Community's Seventh Framework Programme (FP7/2007-2013) / ERC Grant agreement no. 290766 (AAMOT)}
\subjclass[2010]{11F67 (Primary) 11F70, 11G18, 11R39, 22E55 (Secondary). }

\maketitle

\begin{abstract}
The present paper is devoted to the relations between Deligne's conjecture on critical values of motivic $L$-functions and the multiplicative relations between periods of arithmetically normalized automorphic forms on unitary groups.  In the first place, we combine the Ichino--Ikeda--Neal-Harris (IINH) formula -- which is now a theorem -- with an analysis of cup products of coherent cohomological automorphic forms on Shimura varieties to establish relations between certain automorphic periods and critical values of Rankin-Selberg and Asai $L$-functions of $\GL(n)\times\GL(m)$ over CM fields.  By reinterpreting these critical values in terms of automorphic periods of holomorphic automorphic forms on unitary groups, we show that the automorphic periods of holomorphic forms can be factored as products of coherent cohomological forms, compatibly with a motivic factorization predicted by the Tate conjecture.  All of these results are conditional on a conjecture on non-vanishing of twists of automorphic $L$-functions of $\GL(n)$ by anticyclotomic characters of finite order, and are stated under a certain regularity condition.
\end{abstract}

\setcounter{tocdepth}{1}
\tableofcontents

\section*{Introduction}\label{intro}

This  paper is devoted to the relations between two themes.  The first theme is Deligne's conjecture on critical values of motivic $L$-functions.  Our first main theorem is an expression of certain critical values -- those of the $L$-functions of tensor products of motives attached to cohomological automorphic forms on unitary groups -- in terms of  periods of arithmetically normalized automorphic forms on the Shimura varieties attached to these unitary groups.   We refer to these periods as {\it automorphic periods} for the remainder of the introduction.  The full result, like most of the results contained in this  paper, is conditional on a pair of conjectures that will be described below, as well as a relatively mild list of restrictions (see \ref{localhyp}) on the local components of the $L$-functions considered.  Let $F$ be a CM field.  Here is the statement of our {\bf automorphic} version  (Thm. \ref{automorphic Deligne general}) of Deligne's conjecture, over $F$:

\begin{thm} \label{automorphic Deligne general intro} 
Let $n,n'\geq 1$ be integers and let $\Pi$ (resp. $\Pi'$) be a cohomological conjugate self-dual cuspidal automorphic representation of $\GL(n,\Acm)$ (resp. $\GL(n',\Acm)$), which satisfies Hyp.\ \ref{descent}.
 If $n\equiv n' \mod 2$, we assume that $\Pi$ and $\Pi'$ satisfy the conditions of Thm.\ \ref{automorphic Deligne near central}, i.e., that the isobaric sum $(\Pi\eta^n)\boxplus (\Pi'^{c}\eta^n)$ is $2$-regular and that either $\Pi$ and $\Pi'$ are both $5$-regular or $\Pi$ and $\Pi'$ are both regular and satisfy Conj.\ \ref{nonvan}. Whereas if $n\nequiv n' \mod 2$, we assume that $\Pi$ and $\Pi'$ satisfy the conditions of Thm.\ \ref{automorphic Deligne central}, i.e., we assume Conj.\ \ref{nonvan} and suppose that $\Pi_\infty$ is $(n-1)$-regular and $\Pi'_\infty$ is $(n'-1)$-regular.
Then the automorphic version of Deligne's conjecture, cf.\ Conj.\ \ref{main conjecture}, is true:  If $s_0$ is a critical value of   $L(s,\Pi\times \Pi')$, then the value at $s_0$ of the partial $L$-function $L^S(s,\Pi\times \Pi')$ (for some appropriate finite set $S$), satisfies
\begin{equation*}
L^S(s_{0},\Pi\otimes \Pi') \sim_{E(\Pi)E(\Pi')} (2\pi i)^{nn's_{0}} \prod\limits_{\imath \in \Sigma}[\prod\limits_{0\leq i\leq n}P^{(i)}(\Pi,\imath)^{sp(i,\Pi;\Pi',\imath)}\prod\limits_{0\leq j\leq n'}P^{(j)}(\Pi',\imath)^{sp(j,\Pi';\Pi,\imath)}].
\end{equation*}
Here $\imath$ runs over complex embeddings of $F$ belonging to a fixed CM type $\Sigma$, $sp(i,\Pi;\Pi',\imath)$ are integers depending on the relative positions of the infinitesimal characters of $\Pi$ and $\Pi'$ at the place $\imath$, and  $P^{(i)}(\Pi,\imath)$ and  $P^{(j)}(\Pi',\imath)$ are period invariants attached to $\Pi$ and $\Pi'$ by quadratic base change from certain unitary groups (that depend on $\imath$ and the superscripts $(i), (j)$), and the symbol ``$\sim_{E(\Pi)E(\Pi')}$'' means that the left-hand side is the product of the right hand side by an element of a certain number field attached to $\Pi$ and $\Pi'$.
\end{thm}

In other words, the critical values of the Rankin-Selberg $L$-function $L(s,\Pi \times \Pi')$ can be expressed in terms of Petersson norms of certain
arithmetically normalized holomorphic automorphic forms.  We refer to the body of the paper for details and explanations.  Hyp. \ref{descent}, a mild local restriction at non-archimedean places, is only relevant when $nn'$ is even, and then amounts to the familiar fact that it is not always possible to construct even-dimensional hermitian spaces with arbitrary local invariants; it can be relaxed by a standard base change construction at the cost of introducing additional
quadratic irrationalities.\\

Deligne's conjecture, as stated in \cite{deligne}, asserts that the left-hand side of the equation in Thm.\ref{automorphic Deligne general intro}  is proportional (up to the coefficient field $E(M(\Pi))E(M(\Pi'))$) to the period invariant Deligne assigned to a motive $R_{F/\Q}(M(\Pi)\otimes M(\Pi'))$ whose $L$-function is given by $L(s,\Pi\otimes \Pi')$.   The second theme of this paper concerns the relation of the right-hand side of the equation in
 Thm. \ref{automorphic Deligne general intro} to Deligne's period invariant, denoted $c^+(s_0, R_{F/\Q}(M(\Pi)\otimes M(\Pi')))$.  Under the hypotheses of Thm.\ref{automorphic Deligne general intro}, motives $M(\Pi)$ and $M(\Pi')$ over $F$, of rank $n$ and $n'$ over their
 respective coefficient fields, can be constructed in the cohomology of Shimura varieties $Sh(V)$ and $Sh(V')$ attached to the unitary groups of hermitian spaces $V$ and $V'$ of rank $n$ and $n'$ respectively.   These Shimura varieties have the property that their connected components are arithmetic quotients
 of the unit ball in $\C^{n-1}$ and $\C^{n'-1}$, respectively.   Our second main theorem can be paraphrased as follows:
 \begin{thm}\label{cplusintro}   Let $F$, $\Pi$, and $\Pi'$ be as in Thm.\ref{automorphic Deligne general intro}.
Assume Conj.\ \ref{nonvan} (non-vanishing of certain central critical values) and Conj.\ \ref{lvarch} (rationality of certain archimedean integrals).  Then for any critical value $s_0$ of $L(s,\Pi\times \Pi')$, the Deligne period $c^+(s_0,R_{F/\Q}(M(\Pi)\otimes M(\Pi')))$ can be identified with the right-hand side of 
the equation in Thm.\ref{automorphic Deligne general intro}.
 \end{thm}
 
The content of Thm.\ref{cplusintro} is a relation between periods of automorphic forms on Shimura varieties attached to hermitian spaces with different
signatures.     Following an approach pioneered by Shimura over 40 years ago, we combine special cases of Deligne's conjecture with comparisons of distinct expressions for critical values of automorphic $L$-functions to relate automorphic periods on different groups.  These periods are attached to motives (for absolute Hodge cycles) that occur in the cohomology of the various Shimura varieties.  In view of Tate's conjecture on cycle classes in $\ell$-adic cohomology, the relations obtained are consistent with the determination of the representations of Galois groups of appropriate number fields on the $\ell$-adic  cohomology of the respective motives.  The paper \cite{linfactorization} used  arguments of this type to show how to factor automorphic periods on Shimura varieties attached to a CM field $F$ as products of automorphic periods of holomorphic modular forms, each attached to an  embedding $\imath = \imath_{v_0}:  F \hookrightarrow \C$.  Thm. \ref{auto-facto} (see \eqref{fac} below) leads to a factorization of the latter periods in terms of periods of coherent cohomology classes on Shimura varieties attached to the unitary group $H^{(0)}$ of a hermitian space over $F$ with signature $(n-1,1)$ at $\imath_{v_0}$ and definite at embeddings that are distinct from $\imath_{v_0}$ and its complex conjugate.  This factorization -- see Thm.\ \ref{main factorization} (and the explanations in \S\ref{sect:goals}), which is the precise statement of which Thm.\ \ref{cplusintro} is a paraphrase --  also depends on the conjectures and local restrictions mentioned above.  The archimedean components of the automorphic representations we consider are tempered and are {\it cohomological}, in the sense explained in \eqref{rlag} below.  The local restrictions at archimedean places take the form of regularity hypotheses on the infinitesimal characters of these finite-dimensional representations, or equivalently on the Hodge structures of the associated motives. \\

Taken together, our two main theorems, Thm.\ \ref{automorphic Deligne general} and Thm.\ \ref{main factorization} -- always assuming the two conjectures and local restrictions already mentioned -- provide a plausible version of Deligne's conjecture for the $L$-function of the tensor products of the motives $M(\Pi)$ and $M(\Pi')$ over $\imath(F)$ attached to $\Pi$ and $\Pi'$, respectively, with the formula for Deligne's period $c^+(s_0,R_{F/\QQ}(M(\Pi)\otimes M(\Pi')))$, computed as in Prop.\ \ref{Deligne period motivic}. We refer to our Thm.\ \ref{thm:clozel}, where the motives $M(\Pi)$ and $M(\Pi')$ are in fact constructed, verifying a conjecture of Clozel for those cuspidal automorphic representations $\Pi$ and $\Pi'$, respectively. \\
    
The main results are based on two kinds of expressions for automorphic $L$-functions.  The first derives from the Rankin-Selberg method for $\GL(n)\times\GL(n-1)$.  The paper \cite{grob-harr} applied this method over $F$, when $F$ is imaginary quadratic, to prove some cases of Deligne's conjecture in the form derived in \cite{harrisANT}.   A principal innovation of that paper was to take the automorphic representation on $\GL(n-1)$ to define an Eisenstein cohomology class.  This has subsequently been extended to general CM fields in \cite{jie-thesis}, \cite{grob-app} and  \cite{grob_lin}.  The basic structure of the argument is the same in all cases.  Let $G_r$ denote the algebraic group $\GL(r)$ for any $r \geq 1$, over the base CM-field $F$; let $K_{G_r,\infty}$ denote a maximal connected subgroup of $G_{r,\infty}:=\GL_r(F \otimes_\Q \R)$ which is compact modulo the center, and and let $\g_{r,\infty}$ denote the Lie algebra of $G_{r,\infty}$. Let $\Pi$ be a cuspidal automorphic representation of $G_n(\A_F)$ and let $\Pi'$ be an isobaric automorphic representation of $G_{n-1}(\A_F)$, 
$$
\label{sum}  \Pi' = \Pi'_1 \boxplus \Pi'_2 \boxplus \dots \boxplus \Pi'_r
$$
where each $\Pi'_i$ is a cuspidal automorphic representation of $\GL_{n_i}(\A_F)$ and $\sum_i n_i = n-1$. We assume that both $\Pi$ and $\Pi'$ are cohomological, in the sense that there exist finite-dimensional, algebraic representations $\EE$ and $\EE'$ of $G_{n,\infty}$ and $G_{n-1,\infty}$, respectively, such that the relative Lie algebra cohomology spaces
\begin{equation}\label{rlag}
H^*(\g_{n,\infty},K_{G_n,\infty},\Pi_\infty\otimes \EE) \neq 0,\quad\quad  H^*(\g_{n-1,\infty},K_{G_{n-1},\infty},\Pi'_\infty\otimes \EE') \neq 0.
\end{equation}
Here $\Pi_\infty$ and $\Pi'_\infty$ denote the archimedean components of $\Pi$ and $\Pi'$, respectively.  Although this is not strictly necessary at this stage 
we also assume 
\begin{hyp}\label{duality}  The cuspidal automorphic representations $\Pi'_i$ and $\Pi$ are all conjugate self-dual.  
\end{hyp}
Then it is known that $\Pi$ and all summands $\Pi'_i$ are tempered locally everywhere \cite{HT,Shin, car} because each $\Pi'_i$ and $\Pi$ is (up to a twist by a half-integral power of the norm, which we ignore for the purposes of this introduction) a cuspidal {\it cohomological} representation. 
Suppose there is a non-trivial $G_{n-1}(F\otimes_\Q\C)$-invariant pairing
\begin{equation}\label{piano} \EE \otimes \EE' \ra \C. \end{equation}
Then the central critical value of the Rankin-Selberg $L$-function $L(s,\Pi \times \Pi')$ can be expressed as a  cup product in the cohomology, with twisted coefficients, of the locally symmetric space attached to $G_{n-1}$. These cohomology spaces have natural rational structures over number fields, and the cup product preserves rationality.  From this observation we obtain the following relation for the {\it central} critical value $s = s_0$:   
\begin{equation}\label{whitt}  
L^S(s_0,\Pi \times \Pi') \sim p(s_{0},\Pi_{\infty} \ \Pi'_{\infty}) \ p(\Pi) \ p(\Pi')  
\end{equation}
where $p(\Pi)$ and $p(\Pi')$ are the {\it Whittaker periods} of $\Pi$ and $\Pi'$, respectively, and $p(s_{0},\Pi_{\infty},\Pi'_{\infty})$ is an archimedean factor depending only on $s_{0}$, $\Pi_{\infty}$ and $\Pi'_{\infty}$.  
The notation  $\sim$, here and below, means ``equal up to specified algebraic factors''; we will have more to say about this in  section \ref{notationsim} at the end of this introduction.  We point out that this archimedean period $p(s_{0},\Pi_{\infty},\Pi'_{\infty})$ can in fact be computed, up to algebraic factors, as a precise integral power of $2\pi i$, see \cite{grob_lin} Cor.\ 4.30. It turns out that this power is precisely the one predicted by Deligne's conjecture. Furthermore, Thm.\  2.5 of \cite{grob_lin} provides the expression
\begin{equation}\label{piprime} 
p(\Pi') \sim \prod_{i = 1}^r p({\Pi'_i}) \cdot \prod_{i < j}L^S(1,\Pi'_i \times (\Pi'_j)^{{\sf v}}).
\end{equation} 
 Finally, the cuspidal factors $p(\Pi)$ and $p(\Pi'_i)$ can be related to the critical $L$-values of the Asai $L$-functions 
 $L^S(1,\Pi,{\rm As}^{(-1)^{n}})$, $L^S(1,\Pi'_i,{\rm As}^{(-1)^{n_i}})$, as in \cite{grob-harr}, \cite{grob_harris_lapid}, \cite{jie-thesis}, and most generally in \cite{grob_lin}.
 
Under Hyp.\ \ref{duality}, it is shown in \cite{grob-harr} and \cite{jie-thesis} that all the terms in \eqref{piprime} can be expressed in terms of automorphic periods of arithmetically normalized holomorphic modular forms on Shimura varieties attached to unitary groups of various signatures.  
So far we have only considered the central critical value $s_0$, but variants of \eqref{piano}  allow us to treat other critical values of $L(s,\Pi \times \Pi')$ in the same way.  
Non-central critical values, when they exist, do not vanish, and in this way the expressions \eqref{whitt} and \eqref{piprime} give rise to non-trivial relations among these automorphic periods, including the factorizations proved in \cite{linfactorization}.    \\

Of course
$$L(s,\Pi \times \Pi') = \prod_{i = 1}^r L(s,\Pi \times \Pi'_i).$$
When $n_i = 1$, the critical values of $L(s,\Pi \times \Pi'_i)$ were studied in \cite{H97} and subsequent papers, especially \cite{guer, guer-lin}.  Thus, provided $n_1 = m \leq n-1$, it is possible to analyze the critical values of $L(s,\Pi \times \Pi'_1)$ using \eqref{whitt} and \eqref{piprime}, provided $\Pi'_1$ can be completed to an isobaric sum as above, with $n_i = 1$ for $i = 2, \dots, r$, such that \eqref{piano} is satisfied.   This argument is carried out in detail in \cite{lin15} and \cite{jie-thesis}. See also \cite{gro-sach}.\\

In the above discussion, we  need to assume that abelian twists $L(s,\Pi \times \Pi'_i)$, $i > 1$, have non-vanishing critical values; this can be arranged 
automatically under appropriate regularity hypotheses but requires a serious {\it non-vanishing hypothesis} in general -- we return to this point later.  For 
the moment, we still have to address the restriction on the method imposed by the requirement \eqref{piano}.  For this, it is convenient to divide critical 
values of $L(s,\Pi \times \Pi'_1)$, with $n_1 = m$ as above, into two cases.  We say the weight of the $L$-function $L(s,\Pi \times \Pi'_1)$ is {\it odd} 
(resp. {\it even}) if the integers $n$ and $m$ have  opposite (resp. equal) parity.   
In the case of even parity, the left-most critical value -- corresponding to $s = 1$ in the unitary normalization of the $L$-function -- was treated completely in Lin's thesis \cite{jie-thesis}.  
Here we treat the remaining critical values in the even parity case by applying a method introduced long ago by Harder, and extended recently by Harder and Raghuram \cite{harder-ragh} for totally real fields and Raghuram in \cite{ragh19} for totally imaginary fields, to compare successive critical values of a Rankin-Selberg $L$-function for $\GL(n) \times \GL(m)$. Under different assumptions, even more refined results for successive critical values have been established in the odd parity case in \cite{lin15}, \cite{jie-thesis}, \cite{grob_lin} and \cite{gro-sach}, extending \cite{harder-ragh} to CM-fields. This reduces the analysis of critical values in the odd case to the central critical value -- provided the latter does not vanish, which we  now assume.\\

In order to treat the central critical value when we cannot directly complete $\Pi'_1$ to satisfy \eqref{piano}, we need a second expression for automorphic $L$-functions:   the Ichino-Ikeda-N. Harris formula (henceforward:  the IINH formula) for central values of automorphic $L$-functions of $U(N)\times U(N-1)$, stated below as Thm.\ \ref{conjecture II}.  
Here the novelty is that we complete both $\Pi$ on $G_n(\A_F)$ and $\Pi'_1$ on $G_m(\A_F)$ to isobaric cohomological representations $\tilde{\Pi}$ and $\tilde{\Pi}'$ of $G_N(\A_F)$ and $G_{N-1}(\A_F)$, respectively, for sufficiently large $N$, adding $1$-dimensional representations $\chi_i$ and $\chi'_j$ in each case, so that the pair 
$(\tilde{\Pi}, \tilde{\Pi}')$  satisfies \eqref{piano}.   At present we have no way of interpreting the critical values of $L(s,\tilde{\Pi} \times \tilde{\Pi}')$ as cohomological cup products, for the simple reason that both $\tilde{\Pi}$ and $\tilde{\Pi}'$ are Eisenstein representations and the integral of a product of Eisenstein series is divergent.  However, we can replace the Rankin-Selberg integral by the IINH formula, provided we assume
\begin{hyp}\label{non-van-global}  For all $i, j$ we have
$$L(s_0,\Pi \times \chi'_j) \neq 0; ~ L(s_0,\chi_i \times \Pi'_1) \neq 0;  ~~ L(s_0, \chi_i \cdot \chi'_j) \neq 0.$$
Here $s_0$ denotes the central value in each case ($s_0 = \tfrac{1}{2}$ in the unitary normalization).  
\end{hyp}
We have already assumed that the central value of interest, namely $L(s_0,\Pi \times \Pi'_1)$, does not equal zero.   
Assuming Hyp.\ \ref{non-van-global}, an argument developed in \cite{harrisANT,grob-harr}, based on the IINH formula, allows us to express the latter central value in terms of automorphic periods of arithmetically normalized holomorphic automorphic forms on unitary groups.  
In order to relate the values in Hyp.\ \ref{non-van-global} to the IINH formula, which is a relation between periods and central values of $L$-functions of pairs of unitary groups, we apply Hyp.\ \ref{duality} and the theory of stable base change for unitary groups, as developed in sufficient generality in \cite{KMSW,shin}, to identify the $L$-functions in the IINH formula with automorphic $L$-functions on general linear groups.\\

This argument, which is  carried out completely in \S\ref{central value}, is one of the keys to the factorization of periods of a single arithmetically normalized holomorphic automorphic form $\omega_{(r_{\imath},s_{\imath})}(\Pi)$ on the Shimura variety attached to the unitary group $H$ of an $n$-dimensional hermitian space over $F$ with signature $(r_{\imath},s_{\imath})$ at the place 
$\imath = \imath_{v_0}$ mentioned above, and definite at embeddings that are distinct from $\imath$ and its complex conjugate.    
The notation indicates that $\omega_{(r_{\imath},s_{\imath})}(\Pi)$ belongs to an automorphic representation of $H$ whose base change to $G_n$ is our original cuspidal automorphic representation $\Pi$.  
The period in question, denoted $P^{(s_{\imath})}(\Pi,\imath)$, is essentially the normalized Petersson inner product of $\omega_{(r_{\imath},s_{\imath})}(\Pi)$ with itself. It was already explained in \cite{H97} that Tate's conjecture implies a relation of the following form:
\begin{equation}\label{fac}
P^{(s_{\imath})}(\Pi,\imath) \sim  \prod\limits_{0\leq i\leq s_{\imath}} P_{i}(\Pi,\imath)
\end{equation}
where $P_{i}(\Pi,\imath)$ is a normalized version of the Petersson norm of a form on the Shimura variety attached to the specific unitary group $H^{(0)}$. Our main result on factorization (Thm.\ \ref{main factorization}) is a version of \eqref{fac} and allows us to relate the local arithmetic automorphic periods $P^{(s_{\imath})}(\Pi,\imath)$ to the motivic periods $Q^{(s_{\imath})}(M(\Pi),\imath)$ appearing in the factorization of Deligne's periods.  Like most of the other theorems already mentioned, this one is conditional on two conjectures and local conditions, to which we now turn. 

\subsection*{Conjectures assumed in the proofs of the main theorems}  

Thm.\ \ref{auto-facto}, which is the basis for Thm.\ \ref{main factorization}, is conditional on the following two conjectures:

\begin{itemize}


\item[(a)] Conj.\ \ref{nonvan} (non-vanishing of certain twisted central critical values).
\item[(b)] Conj.\ \ref{lvarch} (rationality of certain archimedean integrals).
\end{itemize}

Conj.\ \ref{lvarch} can only be settled by a computation of the integrals in question.  The conjecture is natural because its failure would contradict  the Tate conjecture; it is also known to be true in the few cases where it can be checked.  Methods are known for computing these integrals but they are not simple.   In the absence of this conjecture, the methods of this paper provide a weaker statement:  the period relation in Thm.\ \ref{auto-facto} is true up to a product of factors  that depend only on the archimedean component of $\Pi$. Such a statement had already been proved in \cite{H07} using the theta correspondence, but the proof is much more complicated.\\

Everyone seems to believe Conj.\ \ref{nonvan}, but it is clearly very difficult.  In fact, the proof of general non-vanishing theorems for character twists of $L$-functions of $\GL(n)$, with $n > 2$, seemed completely out of reach until recently.  In the last few years, however, there has been significant progress in the cases $n = 3$ and $n = 4$, by two very different methods \cite{jz,blm}, and one can hope that there will be more progress in the future.\\


Although the main theorems are conditional on these conjectures, 
we still believe that the methods of this paper are of interest:  they establish clear relations between important directions in current research on automorphic forms and a  version of Deligne's conjecture in the most important cases accessible by automorphic methods.  Moreover, the most serious condition is the non-vanishing Conj. \ref{nonvan} above.  The proofs, however, remain valid whenever the non-vanishing can be verified for a given automorphic representation $\Pi$ and all the automorphic representations $\Pi'$ that intervene in the successive induction steps as in \S\ref{sect:proofThm61}.

\subsection*{About the proofs}  The first main theorem relates special values of $L$-functions to automorphic periods, and relies on the methods described above:  the analysis of Rankin-Selberg $L$-functions using cohomological cup products, in particular Eisenstein cohomology, and the results of \cite{grob-harr, jie-thesis, harder-ragh, grob_lin} on the one hand, and the IINH conjecture on the other.  
The second main theorem obtains the factorization of periods \eqref{fac} by applying the IINH conjecture to the results on special values, and by using a result on non-vanishing of cup products of coherent cohomology proved in \cite{H14}.  In fact, the case used here had already been treated in \cite{HLi}, assuming properties of stable base change from unitary groups to general linear groups that were recently proved in \cite{KMSW}: Some of the results of  \cite{KMSW} are still conditional, but what we need for our purposes can be found therein in sufficient generality in unconditional form. \\

The results of \cite{H14} are applied by induction on $n$, and each stage of the induction imposes an additional regularity condition.
This explains the regularity hypothesis in the statement of Thm.\ \ref{auto-facto}.  The factorization in the theorem must be true in general, but it is not clear to us whether the method based in the IINH conjecture can be adapted in the absence of the regularity hypothesis.

\subsection*{On using the IINH conjecture to solve for unknowns}
Although we have no sympathy with the general outlook of the politician Donald Rumsfeld, and we consider his role in recent history to be largely deleterious, in the formulation of the strategy for proving our main results we did find it helpful to meditate on his thoughts on knowledge, as expressed in the following quotation \cite{EM}:
\\
\\
{\it...as we know, there are known knowns; there are things we know we know. We also know there are known unknowns; that is to say we know there are some things we do not know. But there are also unknown unknowns -- the ones we don't know we don't know.   }
\\
\\
Rumsfeld neglected the {\bf unknown knowns}, such as the period invariants and critical values that are the main subject of this paper.
The formula of Ichino--Ikeda--Neal-Harris, in the inhomogenous form in which it is presented in Thm. \ref{conjecture II}, can be viewed as an identity involving three kinds of transcendental quantities:  critical values of Rankin-Selberg and Asai $L$-functions, Petersson norms of algebraically normalized coherent cohomology classes, and cup products between two such classes.   Here is a simplified version of the conjecture, which is now a theorem, with elementary terms indicated by $(*)$:
\begin{equation}\label{IIsimple}
\cfrac{|I^{can}(f,f')|^{2}}{\<f,f\>\ \<f',f'\>} = (*)\frac{L(\tfrac{1}{2},\Pi\otimes \Pi')}{L(1,\Pi,{\rm As}^{(-1)^n})L(1,\Pi',{\rm As}^{(-1)^{n-1}})}.
\end{equation}
From the Rumsfeld perspective, the denominator of the right-hand side of \eqref{IIsimple}, which is independent of the relative position of $\Pi$ and $\Pi'$,
was an {\bf unknown known} that became a {\bf known known} thanks to \cite{grob-harr} and subsequent generalizations.   The same paper,
as well as \cite{harrisANT}, turn the numerator of the right-hand side into a {\bf known known}, as long as the coefficients of the 
cohomology classes defined by $\Pi$ and $\Pi'$ satisfy the relation \eqref{piano}.  
Thm. \ref{automorphic Deligne general intro} leverages the result under \eqref{piano} and Conj. \ref{nonvan} to turn the numerator of the right-hand side into a {\bf known known} even when $\Pi$ and $\Pi'$ themselves do not satisfy \eqref{piano}.  

Thus the entire right-hand side of the formula can be considered a {\bf known known}.  As for the left-hand side, the periods in the denominator should at best be
viewed as {\bf known unknowns}, and then only when $f$ and $f'$ are holomorphic automorphic forms -- because the only thing we know about Petersson norms of (algebraically normalized) holomorphic automorphic forms is that they are uniquely determined real numbers that are probably transcendental.  That leaves the numerator of the left-hand side, and here we use the result of \cite{H14}, when it applies, to choose $f$ and $f'$ so that the numerator, as a cup product in coherent cohomology, belongs to a fixed algebraic number field.  In fact, the numerator can be taken to be $1$, which is a {\bf  known known}, if anything is.  

Finally, as the unitary groups vary most of the periods that appear in the numerator of the left-hand side of \eqref{IIsimple} have no cohomological interpretation. Thus these have to be viewed as {\bf unknown unknowns} in Rumsfeld's sense -- precisely because the identity \eqref{IIsimple} relates these periods to {\bf known knowns} and {\bf unknown knowns} (the latter when the periods in the denominator are attached to higher coherent cohomology classes on Shimura varieties for unitary groups with mixed signature, which we have not studied).

\begin{sub}{{\bf Local hypotheses.}}\label{localhyp}

The conclusion of Thm.\ \ref{auto-facto} is asserted for $\Pi$ that satisfy a list of conditions. One of them is our hypothesis that
\medskip

{\it $\Pi$ is a $(n-1)$-regular, cohomological conjugate self-dual cuspidal automorphic representation of $G_n(\A_F)$ such that for each possible $[F^+:\Q]$-tuple of signatures $I$, $\Pi$ descends to a tempered cohomological cuspidal representation of a unitary group $U_I(\A_{F^+})$ of signature $I$ at the archimedean places.}
\medskip

The conditions that $\Pi$ be conjugate self-dual and cohomological are necessary in order to make sense of the periods that appear in the statement of the theorem.  Assuming these two conditions, the theorem for cuspidal $\Pi$ implies the analogous statement for more general $\Pi$.  The other two conditions boil down to local hypotheses: The $(n-1)$-regularity condition is a hypothesis on the archimedean component of $\Pi$ that is used in some of the results used in the proof, notably in the repeated use of the results of \cite{jie-thesis}. 
The final condition about descent is automatic if $n$ is odd but requires only a local hypothesis at some non-archimedean place if $n$ is even.  
Using quadratic base change, as in work of Yoshida and others, one can probably obtain a weaker version of Thm.\ \ref{auto-facto} in the absence of this assumption, but we have not checked the details.
\end{sub}

\begin{sub}{{\bf About Galois equivariance and the notation ``$\sim$''.}}\label{notationsim}
Deligne's conjecture concerns two quantities $\alpha, \beta$ that are naturally elements of the algebra $E \otimes_\Q \CC$, where $E$ is a number field
(the {\it coefficient field}) and $\beta$ is invertible.  The assertion $\alpha \sim_E \beta$ means that there exists $\gamma \in E$, considered as an element of $E \otimes_\Q \CC$ through its  embedding in
the first factor, such that 
$$\alpha = \gamma\cdot \beta.$$
Suppose $L \subset E$ is naturally a subfield of $\CC$.  We write
$$\alpha \sim_{E\otimes_{\Q}L} \beta$$
to mean the weaker condition that there exists $\gamma \in E \otimes_\Q L \subset E \otimes_\Q \CC$ such that
$$\alpha = \gamma\cdot\beta$$ (in \cite[p. 82]{H97}, this is written as $\alpha \sim_{E;L} \beta$).
We consider the number field $E$ as a subfield of $\Qbar\subset\C$. We can naturally extend an element $\alpha\in E \otimes_\Q \CC\cong \prod\limits_{E\hookrightarrow \C}\C$ to a family $\underline{\alpha}=\{\alpha(\sigma)\}_{\sigma\in {\rm Gal}(\Qbar/\QQ)}$, putting $\alpha(\sigma)=\alpha_{\sigma|_{E}}$. If we look at relations of $ {\rm Gal}(\Qbar/\QQ)$-families, the relation ``$\sim_{E}$'' means equivariancy under the full Galois group ${\rm Gal}(\Qbar/\QQ)$, and the weaker relation ``$\sim_{E\otimes_{\Q}L}$'' means equivariancy only under ${\rm Gal}(\Qbar/L)$ (see Def.\ \ref{definition algebraic relation} and Rem.\ \ref{rem:Ealg} for details).\\\\
We will be working over a CM field $F$, and our coefficient field $E$ will always contain the Galois closure $F^{Gal}$ of $F$ in $\Qbar$, which
is canonically a subfield of $\CC$.   We had hoped to be able to state our main results on Deligne's conjecture and factorization of periods
using the notation $\sim_E$, but some of the intermediate results on which our theorems are based on the main theorems of \cite{guer-lin},
are stated in the weaker form $\sim_{E\otimes_{\Q}F^{Gal}}$. \\\\  
It is useful to review the rationality properties on which our main results are based, in order to explain why at the present stage we need to settle for the weaker relation $\sim_{E\otimes_{\Q}F^{Gal}}$.  There are two kinds of properties:  those derived from the topological cohomology of the locally symmetric spaces for $\GL(n)$, which are completely Galois-equivariant, and those based on the coherent cohomology of Shimura varieties, where the Galois equivariance depends on the formula for conjugation of Shimura varieties that was conjectured by Langlands \cite{langlands} and proved in general by Borovoi and Milne.  In greater detail, four classes of results are invoked in the course
of the rationality arguments:

\begin{itemize}
\item[(i)]   The relation between critical values of Rankin-Selberg $L$-functions for $\GL(n)\times \GL(n-1)$ and cup products in topological cohomology, as in
\cite{grob-harr,grob-app}, provided by the Jacquet-Piatetski-Shapiro-Shalika integral.  The action of ${\rm Gal}(\Qbar/\QQ)$ is on the coefficients and the results
are therefore completely Galois equivariant.  Moreover, the automorphic representations have models over their fields of rationality (see \ref{sect:EPi}) so there is no need to account for a Brauer obstruction.  However, the method of the papers cited leaves an archimedean Euler factor undetermined.  This Euler factor is
then identified, up to algebraic factors, in \cite{lin15} and \cite{grob_lin}, using a comparison with the method of (iii) below, which is not completely Galois equivariant unless we obtain a complete Galois equivariant version of \cite{guer-lin}.
\item[(ii)] 	The Ichino-Ikeda-N. Harris (IINH) conjecture for definite unitary groups, as in \cite{harrisANT, grob_lin}; see Proposition \ref{lem:piano} below.  This is again purely topological and ${\rm Gal}(\Qbar/\QQ)$-equivariant.  
\item[(iii)]   The doubling method to relate critical values of standard $L$-functions of unitary groups to cup products
of holomorphic and anti-holomorphic classes in coherent cohomology of Shimura varieties,  as in \cite{H97,guer-lin,H21}.  We obtain relations of rationality over the reflex field of the Shimura variety, and since we work with all $U(V)$ with $V$ of dimension $n$ over $F$,  in the applications we only obtain relations over the composite of these reflex fields, which is the source of the relation $\sim_{E\otimes_{\Q}F^{Gal}}$.  (In addition, the automorphic representations of unitary groups do not generally have models over their fields of rationality, which introduces an additional complication.)  
\item[(iv)] The IINH conjecture when the unitary groups are definite at all but one place, which allows us to relate central values of certain $L$-functions to cup products in coherent cohomology in arbitrary degree, and to make use of the result of \cite{H14}.  Again, relations are only obtained over the reflex fields of Shimura varieties.
\end{itemize}

Point (i) involves a number of separate steps, most of which make use of results on Eisenstein cohomology \cite{linfactorization, harder-ragh, gro-sach}, and in particular on Shahidi's formula for Whittaker coefficients of generic Eisenstein series.  Each of these steps is also ${\rm Gal}(\Qbar/\QQ)$-equivariant.  

To replace the relations $\sim_{E\otimes_{\Q}F^{Gal}}$ in points (iii) and (iv) with the more precise relation $\sim_{E}$, one needs to appeal to the results on ${\rm Gal}(\Qbar/\QQ)$-conjugation of Shimura varieties.  The period invariants introduced in \cite{H13} behave well with respect to Galois conjugation; this should make it possible to prove refined versions of our
main results, in the form predicted by Deligne.  It seems that the principal difficulty remaining is to determine the behavior of the archimedean $L$-factors in step (iii) under conjugation of Shimura varieties.   Even if this is resolved, however, the method of \cite{H13} is based on purely formal considerations
about conjugation of Shimura varieties, and the periods introduced there probably conceal some deeper arithmetic information.

\end{sub}

\subsubsection*{Acknowledgements}\small
We thank Sug Woo Shin for several very useful conversations.  We also thank Dipendra Prasad for help with the references for the proof of Prop.\ \ref{mult1}.\\
HG thanks also the late Ferdinand Johannes G\"odde $\dagger$ (called ``Jan Loh'') for memorable discussions (``zero times zero'') in Bonn.
\normalsize

\numberwithin{equation}{section}
\section{Preliminaries} 

\subsection{Number fields and associate characters} \label{sect:fields}

We let $\Qbar$ be the algebraic closure of $\Q$ in $\C$. All number fields are considered as subfields of $\Qbar$.
For $k$ a number field, we let $J_k$ be its set of complex field-embeddings $\imath: k\hra\C$. We will write $S_\infty(k)$ for its set of archimedean places, $\O_k$ for its ring of integers, $\A_k$ for its ring of adeles, and use $k^{Gal}$ for a fixed choice of a Galois closure of $k/\Q$ in $\overline\Q\subset\C$. If $\pi$ is an abstract representation of a non-archimedean group, we will write $\Q(\pi)$ for the field of rationality of $\pi$, as defined in \cite{waldsp}, I.1. In this paper, every rationality field will turn out to be a number field.\\\\
Throughout our paper, $\cm$ will be reserved in order to denote a CM-field of dimension $2d = \dim_\Q \cm$. The set of archimedean places of $F$ is abbreviated $S_\infty=S(\cm)_\infty$. We will chose a section $S_\infty \ra J_F$ and may hence identify a place $v\in S_\infty$ with an ordered pair of conjugate complex embeddings $(\imath_v,\bar\imath_v)$ of $\cm$, where we will drop the subscript ``$v$'' if it is clear from the context. This order in turn fixes a choice of a CM-type $\Sigma:=\{\imath_v : v\in S_\infty\}$. The maximal totally real subfield of $\cm$ is denoted $\tr$. Its set of archimedean places will be identified with $S_\infty$, identifying a place $v$ with its first component embedding $\imath_v\in\Sigma$ and we let Gal$(\cm/\tr)=\{1,c\}$. \\\\
We extend the quadratic Hecke character $\varepsilon=\varepsilon_{\cm/\tr}: (\tr)^\times\backslash \A^\times_{\tr}\ra\C^\times$, associated to $\cm/\tr$ via class field theory, to a conjugate self-dual Hecke character $\eta: \cm^\times\backslash\A_{\cm}^\times\ra\C^\times$. At $v\in S_\infty$, $z\in F_v\cong\C$, we have $\eta_v(z)=z^t \bar z^{-t}$, where $t\in\tfrac12+\Z$. For the scope of this paper, we may assume without loss of generality that $t=\tfrac12$, \cite[\S 6.9.2]{bel-chen}. We define $\psi:=\eta\|\cdot\|^{1/2}$, which is an algebraic Hecke character.\\\\
If $\chi$ is a Hecke character of $\cm$, we denote by $\widecheck{\chi}$ its conjugate inverse $(\chi^{c})^{-1}$.

\subsection{Algebraic groups and real Lie groups} \label{sect:alggrp}
We abbreviate $G_n:=\GL_n/{\cm}$. Let $(V_n,\<\cdot,\cdot\>)$ be an $n$-dimensional non-degenerate $c$-hermitian space over $\cm$, $n\geq 1$, we denote the corresponding unitary group over $F^+$ by $H:=H_n:=U(V_n)$. For each $v \in S_\infty$ we let $(r_v,s_v)$ denote the signature of the hermitian form induced by $\<\cdot,\cdot\>$ on the complex vector space $V_v := V\otimes_{F,\imath_v} \CC.$    \\\\ 
Whenever one has fixed an embedding $V_k\subseteq V_n$, we may view the attached unitary group $U(V_k)$ as a natural $\tr$-subgroup of $U(V_n)$. If $n=1$, the algebraic group $U(V_1)$ is isomorphic to the kernel of the norm map $N_{F/F^+}: R_{F/F^+}((\Gm)_F) \to (\Gm)_{F^+}$, where $R_{F/F^+}$ stands for the Weil-restriction of scalars from $F/F^+$, and is thus independent of $V_1$.\\\\ 
Let $\sigma\in{\rm Aut}(\C)$ and let $V_n$ be as above. Then there is a unique $c$-Hermitian space $^\sigma V_n$ over $F$, whose local invariants at the non-archimedean places of $F$ are the same as of $V_n$ and whose signatures satisfy $({}^\sigma r_v, {}^\sigma s_v)=(r_{\sigma^{-1}\circ \imath_v},s_{\sigma^{-1}\circ \imath_v})$ at all $v\in S_\infty$, cf.\ \cite{Lan}. We let ${}^\sigma H:= U({}^\sigma V_n)$ be the attached unitary group over $F^+$. By definition, ${}^\sigma H(\A_{f})\cong H(\A_{f})$ and ${}^\sigma H_\infty\cong \prod_{v\in S_\infty} H(F_{\sigma^{-1}\circ v})$.
\\\\
If $G$ is any reductive algebraic group over a number field $k$, we write $Z_G/k$ for its center, $G_\infty:=R_{k/\Q}(G)(\R)$ for the real Lie group of $\R$-points of the Weil-restriction of scalars from $k/\Q$ and denote by $K_{G,\infty}\subseteq G_\infty$ the product of $(Z_G)_\infty$ and a fixed choice of a maximal compact subgroup of $G_\infty$. Hence, we have $K_{G_{n,\infty}}\cong \prod_{v\in S_\infty}K_{G_n,v}$, each factor being isomorphic to $K_{G_n,v}\cong \R_+ U(n)$; $K_{H,\infty}\cong \prod_{v\in S_\infty}K_{H,v}$, with $K_{H,v}\cong U(r_v)\times U(s_v)$; and $K_{{}^\sigma H,\infty}\cong \prod_{v\in S_\infty}K_{H, \sigma^{-1}\circ v}$. Here, for any $m$, we denote by $U(m)$ the compact real unitary group of rank $m$.\\\\
Lower case gothic letters denote the Lie algebra of the corresponding real Lie group (e.g., $\g_{n,v}=Lie(G_n(\cm_v))$, $\k_{H,v}=Lie(K_{H,v})$, $\h_v=Lie(H(\tr_v))$, etc. ...). 

\subsection{Highest weight modules and cohomological automorphic representations}

\subsubsection{Finite-dimensional representations}  \label{sect:finitereps}

We let $\EE_\mu$ be an irreducible finite-dimensional representation of the real Lie group $G_{n,\infty}$ on a complex vector-space, given by its highest weight $\mu=(\mu_v)_{v\in S_\infty}$. 
Throughout this paper such a representation will be assumed to be algebraic: In terms of the standard choice of a maximal torus and a basis of its complexified Lie algebra, consisting of the functionals which extract the diagonal entries, this means that the highest weight of $\EE_\mu$ has integer coordinates, $\mu_v=(\mu_{\imath_v},\mu_{\bar\imath_v})\in\Z^n\times \Z^n$ for all $v\in S_\infty$. 
We say that $\EE_\mu$ is {\it m-regular}, if $\mu_{\imath_v,i}-\mu_{\imath_v,i+1}\geq m$ and $\mu_{\bar\imath_v,i}-\mu_{\bar\imath_v,i+1}\geq m$ for all $v\in S_\infty$ and $1\leq i\leq n-1$. 
Hence, $\mu$ is regular in the usual sense (i.e., inside the open positive Weyl chamber) if and only if it is $1$-regular.\\\\
Similarly, given a unitary group $H=U(V_n)$ we let $\cF_\lambda$ be an irreducible finite-dimensional representation of the real Lie group $H_\infty$ on a complex vector-space, given by its highest weight $\lambda=(\lambda_v)_{v\in S_\infty}$, $\lambda_v\in\Z^n$. Any such $\lambda$ may also be interpreted as the highest weight of an irreducible representation of $K_{H,\infty}$. In general, we will denote by $\Lambda=(\Lambda_v)_{v\in S_\infty}$ a highest weight for $K_{H,\infty}$ and we will write $\W_\Lambda$ for the corresponding irreducible representation.

\subsubsection{Cohomological representations} \label{sect:coh}

A representation $\Pi_\infty$ of $G_{n,\infty}$ is said to be {\it cohomological} if there is a highest weight module $\EE_\mu$ as above such that $H^*(\g_{n,\infty},K_{G_n,\infty},\Pi_\infty\otimes \EE_\mu)\neq 0$. In this case, $\EE_\mu$ is uniquely determined by this property and we say $\Pi_{\infty}$ is $m$-regular if $\EE_\mu$ is. \\\\
Analogously, a representation $\pi_\infty$ of $H_\infty$ is said to be {\it cohomological} if there is a highest weight module $\cF_\lambda$ as above such that $H^*(\h_\infty,K_{H,\infty},\pi_\infty\otimes \cF_\lambda)$ is non-zero. See \cite{bowa},\S I, for details.\\\\
It can be shown that an irreducible unitary generic representation $\Pi_\infty$ of $G_{n,\infty}$ is cohomological with respect to $\EE_\mu$ if and only if at each $v\in S_\infty$ it is of the form
\begin{equation}\label{eq:ind}
\Pi_v\cong {\rm Ind}_{B(\C)}^{G(\C)}[z_1^{a_{v,1}}\bar z^{-a_{v,1}}_1\otimes ...\otimes z_n^{a_{v,n}}\bar z^{-a_{v,n}}_n],
\end{equation}
where
\begin{equation}\label{hw and it}
a_{v,j}:=a(\mu_{\imath_v},j):=-\mu_{\imath_v,n-j+1}+\tfrac{n+1}{2}-j
\end{equation}
and induction from the standard Borel subgroup $B=TN$ is unitary, cf.\ \cite[Thm.\ 6.1]{enright} (See also \cite[\S 5.5]{grob-ragh} for a detailed exposition). The set $\{z^{a_{v,i}} \bar{z}^{-a_{v,i}}\}_{1\leq i\leq n}$ is called the {\it infinity type} of $\Pi_v$. For each $v$, the numbers $a_{v,i}\in \Z+\frac{n-1}{2}$ are all different and may be assumed to be in a strictly decreasing order, i.e. $a_{v,1}>a_{v,2}>\cdots>a_{v,n}$.\\\\
If $\pi_\infty$ is an irreducible tempered representation of $H_\infty$, which is cohomological with respect to $\cF_\lambda^{\sf v}$ (the presence of the contragredient will become clear in \S \ref{construction of motive}), then each of its archimedean component-representations $\pi_v$ of $H_v\cong U(r_v,s_v)$ is isomorphic to one of the $d_v:={n \choose r_v}$ inequivalent discrete series representations denoted $\pi_{\lambda,q}$, $0\leq q< d_v$, having infinitesimal character $\chi_{\lambda_v+\rho_v}$, \cite{vozu}. As it is well-known, \cite{bowa}, II Thm.\ 5.4, the cohomology of each $\pi_{\lambda,q}$ is centered in the middle-degree
$$H^p(\h_v,K_{H,v},\pi_{\lambda,q}\otimes\cF^{\sf v}_{\lambda_v})\cong\left\{\begin{array}{ll}
 \C & \textrm{if $p=r_vs_v$} \\
 0 & \textrm{else}
\end{array}
\right.$$
We thus obtain an $S_\infty$-tuple of Harish-Chandra parameters $(\HC_v)_{v \in S_\infty}$, and $\pi_\infty \cong \otimes_{v\in S_\infty} \pi_{\HC_v}$ where $\pi_{\HC_v}$ denotes the discrete series representation of $H_v$ with parameter $\HC_v$.

\subsubsection{Global base change and $L$-packets}\label{bc}

Let $\pi$ be a cohomological square-integrable automorphic\footnote{As usual, we will for convenience not distinguish between a square-integrable automorphic representation, its smooth limit-Fr\'echet-space completion or its (non-smooth) Hilbert space completion in the $L^2$-spectrum, cf.\ \cite{grob_zun} and \cite{grob_book} for a detailed account. Moreover, unless otherwise stated, an automorphic representation is always assumed to be irreducible.} representation of $H(\A_{F^+})$. It was first  proved by Labesse \cite{lab} (see also \cite{harris-labesse, kim-krish04, kim-krish05, morel,shin}) that $\pi$ admits a base change\footnote{Referring to \cite{shin}, the very careful reader may want to assume in addition to our standing assumptions on the field $F$ that $F=\mathcal K F^+$, where $\mathcal K$ is an imaginary quadratic field. This assumption, however, will become superfluous, once the results of \cite{KMSW} are completed, i.e., established also for non-generic global Arthur parameters. } $BC(\pi) = \Pi$ to $G_n(\A_F)$: The resulting representation $\Pi$ is an isobaric sum $\Pi=\Pi_1\boxplus...\boxplus\Pi_k$ of conjugate self-dual square-integrable automorphic representations $\Pi_i$ of $G_{n_i}(\A_F)$, uniquely determined by the following: for every non-archimedean place $v$ of $F^+$, which splits in $F$ and where $\pi_v$ is unramified, the Satake parameter of $\Pi_v$ is obtained from that of $\pi_v$ by the formula for local base change, see for example \cite{ming}. 

It is then easy to see that at such places $v$, the local base change $\Pi_v$ is tempered if and only if $\pi_v$ is. The assumption that $\pi_{\infty}$ is cohomological implies moreover that $\Pi_{\infty}$ is cohomological: This was proved in \cite{lab} \S 5.1 for discrete series representations $\pi_\infty$ but follows in complete generality recalling that $\Pi_{\infty}$ has regular dominant integral infinitesimal character and hence is necessarily cohomological  by combining \cite{enright}, Thm.\ 6.1 and \cite{bowa}, III.3.3 
It is then a consequence of the just mentioned \cite{bowa}, III.3.3 and the results in \cite{clozel, HT, Shin} -- here in particular \cite{car}, Thm.\ 1.2 -- that, if all isobaric summands $\Pi_i$ of $\Pi=BC(\pi)$ are cuspidal, all of their local components $\Pi_{i,v}$ are tempered. Here we also used the well-known fact that as the $\Pi_i$ are unitary, $\Pi$ is fully induced from its isobaric summands.\\\\
We define the global $L$-packet $\prod(H,\Pi)$ attached to such a representation $\Pi$ to be the set of cohomological tempered square-integrable automorphic representations $\pi$ of $H(\A_{F^+})$ such that $BC(\pi) = \Pi$. This is consistent with the formalism in \cite{mok, KMSW}, in which (as in Arthur's earlier work \cite{arthur}) the representation $\Pi$ plays the role of the global Arthur-parameter for the square-integrable automorphic representation $\pi$ of $H(\A_{F^+})$. We recall that temperedness together with square-integrability imply that $\pi$ is necessarily cuspidal, \cite{clozel2}, Prop.\ 4.10, \cite{wallach}, Thm.\ 4.3. Moreover, for each each $\pi\in \prod(H_I,\Pi)$, $\pi_\infty$ is in the discrete series, cf.\ \cite{vozu}. See also \cite{clozelihes}, Lem.\ 3.8 and Lem.\ 3.9. 

\begin{rmk}\label{rmk:temp}
It should be noted that for any cohomological cuspidal automorphic representation $\pi$ of $H(\A_{F^+})$, such that $\Pi=BC(\pi)$ is an isobaric sum $\Pi=\Pi_1\boxplus...\boxplus\Pi_k$ of conjugate self-dual cuspidal automorphic representations, $\pi_v$ is tempered at every place $v$ of $F^+$, i.e., in $\prod(H,\Pi)$. Indeed, in order to see this, recall that $\Pi$ serves as a generic, elliptic global Arthur-parameter $\phi$  in the sense of \cite{KMSW}, \S 1.3.4 (Observe that as $\Pi$ is cohomological, the isobaric summands must be all different.) Its localization $\phi_v$ at any place $v$ of $F^+$ (cf.\ \cite{KMSW}, Prop.\ 1.3.3), is bounded, because so is the local Langlands-parameter attached to the tempered representation $\Pi_{v}$ by the LLC, \cite{HT, henniart}. Hence, (the unconditional item (5) of) Thm.\ 1.6.1 of \cite{KMSW} implies that each square-integrable automorphic representation $\pi$ of $H(\A_{F^+})$ attached to $\phi$ by \cite{KMSW}, Thm.\ 5.0.5 (see also the paragraph preceding this result in {\it loc.\ cit.}, making this assignment unconditional) is tempered at all places. In particular, so is $\pi$.
\end{rmk}

\subsubsection{$\sigma$-twisted representations}
Let $\sigma\in{\rm Aut}(\C)$ and let $\Pi$ be a cohomological cuspidal automorphic representation of $G_n(\A_F)$. Then it is well-known that there exists a unique cohomological cuspidal automorphic representation ${}^\sigma\Pi$ of $G_n(\A_F)$, with the property that $({}^\sigma\Pi)_f\cong {}^\sigma(\Pi_f):=\Pi_f\otimes_{\sigma^{-1}}\C$, cf.\ \cite{clozel}, Thm.\ 3.13. Likewise, if $\pi$ is a cohomological cuspidal automorphic representation of $H(\A_{F^+})$, then there is a square-integrable automorphic representation ${}^\sigma\!\pi$ of ${}^\sigma H(\A_{F^+})$, such that $({}^\sigma\!\pi)_f\cong {}^\sigma(\pi_f):=\pi_f\otimes_{\sigma^{-1}}\C$: Recalling, \cite{gro-seb}, Thm.\ A.1 and \cite{milne-suh}, Thm.\ 1.3, this can be argued as in the second paragraph of \cite{BHR}, p.\ 665. In Lem.\ \ref{lem:unique} below we will provide conditions under which ${}^\sigma\!\pi$ is cuspidal and unique.

\subsection{Critical automorphic $L$-values and relations of rationality} \label{Galois equivariance}
 
 \subsubsection{Critical points of Rankin--Selberg $L$-functions}\label{sect:critRS}
Let $\Pi=\Pi_n\otimes\Pi_{n'}$ be the tensor product of two automorphic representations of $\GL_{n}(\A_F)\times\GL_{n'}(\A_F)$. We recall that a complex number $s_0\in \tfrac{n-{n'}}{2}+\Z$ is called {\it critical} for $L(s,\Pi_{n}\times\Pi_{n'})$ if both $L(s,\Pi_{{n},\infty}\times\Pi_{{n'},\infty})$ and $L^{S}(1-s,\Pi_{{n},\infty}^{\sf v}\times\Pi_{{n'},\infty}^{\sf v})$ are holomorphic at $s=s_0$. In particular, this defines the notion of critical points for standard $L$-functions $L(s,\Pi)$ and hence Hecke $L$-functions $L(s,\chi)$.\\\\
Let now $\Pi$ (resp. $\Pi'$) be a generic cohomological conjugate self-dual automorphic representation of $G_{n}(\Acm)$ (resp.\ $G_{n'}(\Acm)$) with infinity type $\{z^{a_{v,i}} \bar{z}^{-a_{v,i}}\}_{1\leq i\leq n}$ (resp.\ $\{z^{b_{v,j}} \bar{z}^{-b_{v,j}}\}_{1\leq j\leq n'}$) at $v\in S_\infty$. Then, the $L$-function $L(s,\Pi\times \Pi')$ has critical points if and only if $a_{v,i}+b_{v,j}\neq 0$ for all $v$, $i$ and $j$, cf.\ \S5.2 of \cite{jie-thesis}. In this case, the set of critical points of $L(s,\Pi\times\Pi')$ can be described explicitly as the set of numbers $s_0\in \tfrac{n-{n'}}{2}+\Z$ which satisfy
\begin{equation}
-\min |a_{v,i}+b_{v,j}| <s_{0}\leq \min |a_{v,i}+b_{v,j}|,
\end{equation}
the minimum being taken over all $1\leq i\leq n, 1\leq j\leq n'$, and $v\in S_{\infty}$. In particular, if $n\nequiv n' \mod 2$ then $s_0=\tfrac{1}{2}$ is always among these numbers. 

 \subsubsection{Relations of rationality and Galois equivariance}
 
 \begin{defn}[i]\label{definition algebraic relation}
Let  $E,L\subset\C$ be subfields and let $x, y\in E\otimes_\Q\C$. We write 
$$x\sim_{E\otimes_{\Q} L} y,$$ if either $y=0$, or, if $y$ is invertible and there is an $\ell\in E\otimes_\Q L$ such that $x=\ell y$ (multiplication being in terms of $\Q$-algebras). If the field $L$ equals $\Q$, it will be omitted in notation. \\
(ii) Let $E,L\subset\C$ be again subfields. Let $\underline x=\{x(\sigma)\}_{\sigma\in {\rm Aut}(\C)}$ and $\underline y=\{y(\sigma)\}_{\sigma\in {\rm Aut}(\C)}$ be two families of complex numbers. We write
$$\underline x\sim_{E} \underline y$$ 
and say that this relation {\it is equivariant under} Aut$(\C/L)$, if either $y(\sigma)=0$ for all $\sigma\in {\rm Aut}(\C)$, or if $y(\sigma)$ is invertible for all $\sigma\in {\rm Aut}(\C)$ and the following two conditions are verified:
\begin{enumerate}
\item $\cfrac{x(\sigma)}{y(\sigma)}\in \sigma(E)$ for all $\sigma$.
\item $\varrho\left(\cfrac{x(\sigma)}{y(\sigma)}\right)=\cfrac{x(\varrho\sigma)}{y(\varrho\sigma)} $ for all $\varrho\in {\rm Aut}(\C/L)$ and all $\sigma\in {\rm Aut}(\C)$.
\end{enumerate}
\end{defn}
Obviously, one may replace the first condition by requiring it only for all $\varrho$ running through representatives of ${\rm Aut}(\C)/{\rm Aut}(\C/L)$. In particular, if $L=\Q$, one only needs to verify it for the identity $id\in {\rm Aut}(\C)$. If $E$ and $L$ are furthermore number fields, one can define analogous relations for ${\rm Gal}(\Qbar/\QQ)$-families by replacing ${\rm Aut}(\C)$ by ${\rm Gal}(\Qbar/\QQ)$ and ${\rm Aut}(\C/L)$ by ${\rm Gal}(\Qbar/L)$. Note that a ${\rm Gal}(\Qbar/\QQ)$-family can be lifted to an ${\rm Aut}(\C)$-family via the natural projection ${\rm Aut}(\C)\rightarrow {\rm Gal}(\Qbar/\QQ)$, and two ${\rm Gal}(\Qbar/\QQ)$-families are equivalent if and only if their liftings are equivalent.

\begin{rem}[Aut$(\C)$-families vs. $\C^{|J_{E}|}$-tuples]\label{rem:Ealg}
Let $\underline x=\{x(\sigma)\}_{\sigma\in {\rm Aut}(\C)}$ and $\underline y=\{y(\sigma)\}_{\sigma\in {\rm Aut}(\C)}$ be two ${\rm Aut}(\C)$-families and assume we are given two number fields $E,L\subset \C$. If the individual numbers $x(\sigma)$, $y(\sigma)$ only depend on the restriction of $\sigma$ to $E$, then we may identify $\underline x$ and $\underline y$ canonically with elements $x,y\in\C^{|J_{E}|}\cong E\otimes_\Q\C$. The assertion that $\underline x\sim_{E} \underline y$, equivariant under Aut$(\C/L)$ implies that $x\sim_{E\otimes_{\Q} L} y$. 

Conversely, any element $x\in E\otimes_\Q\C \cong \C^{|J_{E}|}$ can be extended to a ${\rm Aut}(\C)$-family $\underline x=\{x(\sigma)\}_{\sigma\in {\rm Aut}(\C)}$, putting $x(\sigma):=x_{\sigma|_E}$. If we assume moreover that $E$ contains $L^{Gal}$, then for $x,y\in  E\otimes_\Q\C \cong \C^{|J_{E}|}$, the assertion $x\sim_{E\otimes_{\Q} L} y$ implies that $\underline x\sim_{E} \underline y$, equivariant under Aut$(\C/L)$.

In this paper, it will be convenient to have both points of view at hand. In fact, we prove assertions of the second type, which is generally a little bit stronger than the first one. But as we are always in the situation that $E$ contains $L^{Gal}$, the two assertions are equivalent and we will jump between them without further mention.  
\end{rem}

\subsection{Interlude: A brief review of motives and Deligne's conjecture}\label{general Deligne}

\subsubsection{Motives, periods over $\Q$ and Deligne's conjecture} We now quickly recall Deligne's conjecture about motivic $L$-functions, in order to put our main results into a precisely formulated framework and to fix notation. We follow Deligne, \cite{deligne}, \S 0.12, in adopting the following (common) pragmatic point of view through realizations:

\begin{defn}\label{definitionmotive}
A {\it motive} $M$ over a number field $k$ with {\it coefficients} in a number field $E(M)$ is a tuple
$$M=(M_{B,\imath}, M_{dR}, M_{\textrm{{\it \'et}}}; F_{B,\imath}, I_{\infty,\imath}, I_{\textrm{{\it \'et}},\imath}),$$
where $\imath\in J_k$ runs through the embeddings $k\hra\C$ and such that there exists an $n\geq 1$, where
\begin{enumerate}

\item[(B)] $M_{B,\imath}$ is an $n$-dimensional $E(M)$-vector space, together with a Hodge-bigraduation 
$$M_{B,\imath}\otimes_\Q \C = \bigoplus_{p,q} M^{p,q}_{B,\imath}$$
as a module over $E(M)\otimes_\Q\C$. 

\item[(dR)] $M_{dR}$ is a free $E(M)\otimes_\Q k$-module of rank $n$, equipped with a decreasing filtration $\{F^i_{dR}(M)\}_{i\in\Z}$ of $E(M)\otimes_\Q k$-submodules.

\item[(\'et)] $M_{\textrm{{\it \'et}}} = \{M_\ell\}_\ell$ is a strictly compatible system, cf.\ \cite{serrec} p.\ 11, of $\ell$-adic Gal$(\overline k/k)$-representations 
$$\rho_{M,\ell}: {Gal}(\overline k/k) \ra \GL(M_\ell)$$
on $n$-dimensional $E(M)_\ell$-vector spaces $M_\ell$, $\ell$ running through the set of finite places of $E(M)$,
\end{enumerate}
to be called ``realizations of $M$'', together with
\begin{enumerate}
\item[(i)] an $E(M)$-linear isomorphism 
$$F_{B,\imath}: M_{B,\imath} \ira M_{B,\overline\imath},$$
which satisfies $F_{B,\imath}^{-1} = F_{B,\overline\imath}$ and commutes with complex conjugation on the Hodge-bigraduation from (B), i.e., $\overline{F_{B,\imath}(M^{p,q}_{B,\imath})} \subseteq M^{p,q}_{B,\overline\imath}$,
\item[(ii)] an isomorphism of $E(M)\otimes_\Q \C$-modules
$$I_{\infty,\imath}: M_{B,\imath}\otimes_\Q\C \ira M_{dR}\otimes_{k,\imath}\C,$$
compatible with the Hodge-bigraduation from (B) and the decreasing filtration from (dR) above, i.e., $I_{\infty,\imath}(\bigoplus_{p\geq i} M^{p,q}_{B,\imath})=F^i_{dR}(M)\otimes_{k,\imath}\C$, and also compatible with $F_{B,\imath}$ and complex conjugation, i.e., $\overline{I_{\infty,\imath}} = I_{\infty,\overline\imath}\circ\overline{F_{B,\imath}}$, and  
\item[(iii)] a family $I_{\textrm{{\it \'et}},\imath} = \{I_{\imath,\ell}\}_\ell$ of isomorphisms of $E(M)_\ell$-vector spaces
$$I_{\imath,\ell}: M_{B,\imath}\otimes_{E(M)} E(M)_\ell \ira M_\ell,$$
$\ell$ running through the set of finite places of $E(M)$, where, if $\imath\in J_k$ is real, then $I_{\imath,\ell}\circ F_{B,\imath} = \rho_{M,\ell}(\gamma_\imath)\cdot I_{\imath,\ell}$, where $\gamma_\imath$ denotes complex conjugation of $\C$ attached to any extension to $\overline k$ of the embedding $\imath: k\hra \C$.
\end{enumerate}
to be called ``comparison isomorphisms''. The common rank $n$ of each realization as a free module is called the {\it rank} of $M$. If $n\geq 1$ and if there is an integer $w$ sucht that $M^{p,q}_{B,\imath}=\{0\}$ whenever $p+q\neq w$, then $M$ is called {\it pure of weight} $w$.
\end{defn}

The \'etale realization allows one to define the $E(M)\otimes_\Q\C$-valued $L$-{\it function} $L(s,M)$ of $M$ as the usual Euler product over the prime ideals $\p\lhd\O_k$,
$$L(s,M):=\left(\prod_{\p} L_\p(s,M)^\jmath\right)_{\jmath\in J_{E(M)}},$$
where $L_\p(s,M):=\det(id - N(\p)^{-s}\cdot \rho_{M,\ell}(Fr^{-1}_\p)| M^{I_\p}_{\ell})^{-1}$, and $Fr_\p$ denotes the geometric Frobenius locally at $\p$ (modulo conjugation) and $I_\p$ is the inertia subgroup in the decomposition group of an(y) extension of $\p$ to $\overline k$. Consequently, viewing $L_\p(s,M)$ as a rational function in the variable $X=N(\p)^{-s}$, the action of $\jmath\in J_{E(M)}$ on $L_\p(s,M)$ is defined by application to its coefficients: Here, we have to adopt the usual hypothesis, cf.\ \cite{deligne}, \S 1.2.1 \& \S 2.2, that at the finitely many ideals $\p$, where $\rho_{M,\ell}$ ramifies, the coefficients of $L_\p(s,M)$, viewed as a rational function in this way, belong to $E(M)$ and that they are independent of $\ell$ not dividing $N(\p)$, in order to obtain a well-defined element of $E(M)\otimes_\Q\C\cong \prod_{\jmath}\C$ (i.e., to make sense of the action of $\jmath$). It is well-known that $L(s,M)$ is absolutely convergent for $Re(s)\gg 0$ and it is tacitly assumed that $L(s,M)$ admits a meromorphic continuation to all $s\in\C$ as well as the usual functional equation with respect to the dual motive $M^{\sf v}$ (whose system of $\ell$-adic representations is contragredient to that of $M$), cf.\ \cite{deligne}, \S 2.2. An integer $m$ is then called {\it critical} for $L(s,M)$, if the archimedean $L$-functions on both sides of the functional equation are holomorphic at $s=m$. We refer to \cite{deligne}, \S 5.2 for the construction of the archimedean $L$-functions attached to $M$ and its dual. \\\\
Let now be $M$ a pure motive of weight $w$. By considering the motive $R_{k/\Q}(M)$, which is obtained from $M$ by applying restriction of scalars (i.e., whose system of $\ell$-adic representations is obtained by inducing the one attached to $M$ from Gal$(\overline k/k)$ to Gal$(\overline \Q/\Q)$) we may always reduce ourselves to the case, where $M$ is defined over $\Q$, which is the framework in which Deligne's conjecture is stated. As we are then left with only one embedding $\imath=id$, we will drag it along in order to lighten the burden of notation. \\\\
So, let $F_\infty=F_{B,id}: M_{B} \ira M_{B}$ be the only infinite Frobenius. If $w=2p$ is even, we suppose that $F_\infty$ acts by multiplication by $\pm 1$ on $M^{p,p}_B$. We then denote by $n^\pm=n^\pm(M)$ the dimension of the $+1$- (resp.\ $-1$-eigenspace) $M^\pm_B$ of $M_B$ of the involution $F_\infty$. Let $F_{dR}^\pm$ be $E(M)$-subspaces of $M_{dR}$, given by the filtration $\{F^i_{dR}(M)\}_{i\in\Z}$, such that the rank of $M^\pm_{dR}:=(M_{dR}/F^\mp_{dR})$ equals $n^\pm$ and such that $I_\infty$ induces isomorphisms of $E(M)\otimes_\Q\C$-modules
$$I^\pm_\infty: M^\pm_B\otimes_\Q\C \ira M^\pm_{dR}\otimes_\Q\C.$$
Following Delgine, we define two {\it periods}
$$c^\pm(M):=(\det(I^\pm_\infty)_\jmath)_{\jmath\in J_{E(M)}}\in (E(M)\otimes_\Q\C)^\times,$$
and
$$\delta(M):= (\det(I_\infty)_\jmath)_{\jmath\in J_{E(M)}}\in (E(M)\otimes_\Q\C)^\times.$$
Here, each determinant is computed with respect to a fixed choice of $E(M)$-rational bases of source and target spaces. Up to multiplication by an invertible element in the $\Q$-algebra $E(M)$, both periods hence depend only on $M$. 

\begin{conj}[Deligne, \cite{deligne}, Conj.\ 2.8]\label{conj:Deligne}
Let $M$ be a pure motive of weight $w$ over $\Q$ and let $m$ be a critical point for $L(s,M)$. Then 
$$L^{S}(m,M)\sim_{E(M)} (2\pi i)^{n^{(-1)^m}\cdot m} \ c^{(-1)^m}(M)$$
\end{conj}

\subsubsection{Factorizing periods} 
Switching back to our general number field $k$, we choose and fix a section $S_\infty(k)\ra J_k$ and let $\Sigma_k$ be its image in $J_k$. If $w=2p$ is even, we assume, similar to the case $k=\Q$, that $R_{k/\Q}(\bigoplus_{\imath\in\Sigma_k} F_{B,\imath})$ acts by a scalar on $R_{k/\Q}(M)^{p,p}_B$. For $\imath\in \Sigma_k$ complex, this implies that $M^{p,p}_{B,\imath}=\{0\}$. Next, one may analogously define $\pm 1$--eigenspaces of $F_{B,\imath}$, which now obviously have to depend of the nature of the embedding $\imath\in J_k$: If $\imath$ is real, then our definition of $M_{B,\imath}^\pm$ is verbatim the one of the case $k=\Q$ from above, whereas if $\imath$ is complex, then we obtain eigenspaces $(M_{B,\imath}\oplus M_{B,\overline\imath})^\pm$ of the direct sum $M_{B,\imath}\oplus M_{B,\overline\imath}$. We also may analogously  define spaces $F^\pm_{dR}$ attached to the Hodge-filtration $\{F^i_{dR}(M)\}_{i\in\Z}$, cf.\ \cite{yoshida}, pp.\ 149--150, and we set $M^\pm_{dR}:=(M_{dR}/F^\mp_{dR})$. For $\imath\in\Sigma_k$, the maps $I_{\infty,\imath}$ induce canonical isomorphisms of $E(M)\otimes_\Q\C$-modules
$$I^\pm_{\infty,\imath}:M^\pm_{B,\imath}\otimes_\Q\C \ira M^\pm_{dR}\otimes_{k,\imath}\C,$$
if $\imath$ is real and 
$$I^\pm_{\infty,\imath}:(M_{B,\imath}\oplus M_{B,\overline\imath})^\pm\otimes_\Q\C \ira (M^\pm_{dR}\otimes_{k,\imath}\C)\oplus(M^\pm_{dR}\otimes_{k,\overline\imath}\C)$$
if $\imath$ is complex. 
We define
$$c^\pm(M,\imath):=(\det(I^\pm_{\infty,\imath})_\jmath)_{\jmath\in J_{E(M)}}\in (E(M)\otimes_\Q\C)^\times$$
and
$$\delta(M,\imath):= (\det(I_{\infty,\imath})_\jmath)_{\jmath\in J_{E(M)}} \in (E(M)\otimes_\Q\C)^\times.$$
Up to multiplication by an invertible element in $E(M)\otimes_\Q\imath(k)$, they only depend on $M$. Finally, let $n^\pm$ be the rank of the free $E(M)\otimes_\Q k$-module $M^\pm_{dR}$, if $k$ has a real place (respectively, if $k$ is totally imaginary, let $2n^\pm$ be the rank of the free $E(M)\otimes_\Q k$-module $M^\pm_{dR}\oplus \overline{M^\pm_{dR}}$, where $\overline{M^\pm_{dR}}$ is the $E(M)\otimes_\Q k$-module $M^\pm_{dR}$, but with complex conjugated scalar-mulitplication by $k$: $x\star v:=\overline x \cdot v$, $x\in k$, $v\in M^\pm_{dR}$.) Then, the two perspectives of Deligne's periods are linked by the following relations as elements of $\Q$-algebras:
$$c^\pm(R_{k/\Q}(M)) \sim_{E(M) K} D_k^{n^\pm/2}\prod_{\imath\in \Sigma_k} c^\pm(M,\imath)$$
$$\delta(R_{k/\Q}(M)) \sim_{E(M) K} D_k^{n/2} \prod_{\imath\in J_k} \delta(M,\imath),$$
where $K$ (resp.\ $D_k$) denotes the normal closure (resp.\ discriminant) of $k/\Q$, the latter identified with $1\otimes D_k$ in $E(M)\otimes_\Q \C$, cf.\ \cite{yoshida}, Prop.\ 2.2. We also refer to Prop.\ 2.11 of \cite{harris-lin} for a finer decomposition over $E(M)$.\\\\
Finally, we also recall the notion of regularity: To this end, we assume that we are given a motive $M$ with coefficients in $E(M)$. Since $E(M)\otimes_\Q\C\cong\C^{|J_{E(M)|}}$, for each $\imath\in J_k$,  there is a decomposition of $\C$-vector spaces
$$M^{p,q}_{B,\imath}=\bigoplus_{\jmath\in J_{E(M)}} M^{p,q}_{B,\imath}(\jmath).$$ 
We say that $M$ is {\it regular}, if $\dim M^{p,q}_{B,\imath}(\jmath)\leq 1$ for all $p,q\in\Z$, $\imath\in J_k$ and $\jmath\in J_{E(M)}$. For a fixed pair $(\imath,\jmath)$ as above, the set of pairs $(p,q)$, such that $M^{p,q}_{B,\imath}(\jmath)\neq 0$ is then called the {\it Hodge-type of $M$ at $(\imath,\jmath)$} and $(p,q)$ a {\it Hodge weight}. The Hodge-type is particularly useful, to give an explicit description of the critical points of $L(s,M)$. Indeed, if $M$ is regular and pure of even weight $w$, assume that $(\frac{w}{2},\frac{w}{2})$ is not a Hodge weight at any pair of embeddings. Then an integer $m$ is critical for $L(s,M)$ if and only if 
\begin{equation}
-\min_{(p,q)}\{ |p-\tfrac{w}{2}|\}+\tfrac{w}{2}< m\leq \min_{(p,q)}\{ |p-\tfrac{w}{2}|\}+\tfrac{w}{2}
\end{equation}
where $(p,q)$ runs over the Hodge weights for all pair $(\imath,\jmath)$.

\subsection{Motivic split indices}
Let $M$ and $M'$ be regular pure motives over $F$ with coefficients in a number field $E(M)=E(M')$ of weight $w$ and $w'$, respectively. We write $n$ for the rank of $M$ and $n'$ for the rank of $M'$. We write the Hodge-type of $M$ (resp.\ $M'$) at $(\imath,\jmath)$ as $(p_i,w-p_i)_{1\leq i\leq n}$, with $p_1>...>p_n$ (resp.\  $(q_j,w'-q_j)_{1\leq j\leq n'}$, with $q_1>...>q_{n'}$). Consider the tensor product $M\otimes M'$ (over $F$), whose system of $\ell$-adic representations is simply the system of tensor products $M_\ell\otimes M'_\ell$. We assume that $(M\otimes M')^{p,q}_{B,\imath}$ vanishes at $p=q=\tfrac{w+w'}{2}$, i.e., that $(\tfrac{w+w'}{2},\tfrac{w+w'}{2})$ is not a Hodge weight, i.e., $p_i+q_j\neq \tfrac{w+w'}{2}$ for all $i,j$. We put $p_{0}:=+\infty$ and $p_{n+1}:=-\infty$, and define:

$$sp(i,M;M',\imath,\jmath):=\#\{1\leq j\leq n'\mid p_{i}-\tfrac{w+w'}{2}>-q_{j}>p_{i+1}-\tfrac{w+w'}{2}\}.$$
We call $sp(i,M;M',\imath,\jmath)$ a {\it (motivic) split index}, reflecting the fact that the sequence of inequalities $-q_{n'}>...>-q_1$ splits into exactly $n+1$ parts, when merged with $p_1-\tfrac{w+w'}{2}>...>p_n-\tfrac{w+w'}{2}$, where the length of the $i$-th part in this splitting is $sp(i,M;M',\imath,\jmath)$. This gives rise to the following

\begin{defn}\label{split motivic}
For $0\leq i\leq n$ and $\imath\in\Sigma$, we define the {\it (motivic) split indices} (cf.\ \cite{harris-lin}, Def.\ 3.2)
$$sp(i,M;M',\imath):=(sp(i,M;M',\imath,\jmath))_{\jmath\in J_{E(M)}}\in\N^{J_{E(M)}},$$
and, mutatis mutandis, 
$$sp(j,M';M,\imath):=(sp(j,M';M,\imath,\jmath))_{\jmath\in J_{E(M')}}\in\N^{J_{E(M')}},$$
\end{defn}

\subsection{Motivic periods}\label{motivic periods}

Let $M$ be a regular pure motive over $\cm$ of rank $n$ and weight $w$ with coefficients in a number field $E\supset F^{Gal}$. For $1\leq i\leq n$ and $\imath \in \Sigma$, we have defined {\it motivic periods} $Q_{i}(M,\imath)$ in \cite{harrisANT} (see \cite{harris-lin}, Def.\ 3.1 for details). They are elements in $E\otimes_\Q \C$, well-defined up to multiplication by elements in $E\otimes_\Q \imath(\cm)$. If $M$ is moreover polarised, i.e., if $M^{{\sf v}}\cong M^{c}$, the period $Q_{i}(M,\imath)$ is equivalent to the inner product of a vector in $M_{B,\imath}$, the Betti realisation of $M$ at $\imath$, whose image via the comparison isomorphism is inside $i$-th bottom degree of the Hodge filtration for $M$. We have furthermore defined
\begin{equation}\label{eq:Qperiods}
Q^{(i)}(M,\imath):=Q_{0}(M,\imath)Q_{1}(M,\imath)\cdots Q_{i}(M,\imath), 
\end{equation}
where $Q_{0}(M,\imath):=\delta(M,\imath)(2\pi i)^{n(n-1)/2}$. Then, Deligne's periods can be interpreted interpreted in terms of the above motivic periods:

\begin{prop}(cf.\ \cite{harris-lin}, Prop.\ 2.11 and 3.13)\label{Deligne period motivic}
Let $M$ be a regular pure motive over $\cm$ of rank $n$ and weight $w$ with coefficients in a number field $E\supset F^{Gal}$ and let $M'$ be a regular pure motive over $\cm$ of rank $n'$ and weight $w'$ with coefficients in a number field $E'\supset F^{Gal}$. We assume that $(\tfrac{w+w'}{2},\tfrac{w+w'}{2})$ is not a Hodge weight for the motive $M\otimes M'$ with coefficients in $EE'$. Then, the Deligne periods satisfy
\begin{eqnarray}
&c^{\pm}(R_{\cm/\Q}(M\otimes M'))&\\ \nonumber
 &\sim_{EE'\otimes_\Q F^{Gal}}  (2\pi i)^{-\frac{nn'd(n+n'-2)}{2}} \prod\limits_{\imath\in \Sigma} [\prod\limits_{j=0}^{n}Q^{(j)}(M,\imath)^{sp(j,M;M',\imath)}\prod\limits_{k=0}^{n'}Q^{(k)}(M',\imath)^{sp(k,M';M,\imath)}].
\end{eqnarray} 
\end{prop}

\section{Translating Deligne's conjecture into an automorphic context}\label{auto Deligne}

\subsection{CM-periods and special values of Hecke characters}\label{CM-periods}
\subsubsection{Special Simura data and CM-periods}\label{sect:CM}
Let $(T,h)$ be a Shimura datum where $T$ is a torus defined over $\Q$ and $h:R_{\C/\R}(\mathbb{G}_{m,\C})\rightarrow T_{\R}$ a homomorphism satisfying the axioms defining a Shimura variety, cf.\ \cite{milne}, II. Such pair is called a {\it special} Shimura datum. Let $Sh(T,h)$ be the associated Shimura variety and let $E(T,h)$ be its reflex field.\\\\
For $\chi$ an algebraic Hecke character of $T(\A_\Q)$, we let $E_T(\chi)$ be the number field generated by the values of $\chi_f$, $E(T,h)$ and $F^{Gal}$, i.e., the composition of the rationality field $\Q(\chi_f)$ of $\chi_f$, and $E(T,h) F^{Gal}$. If it is clear from the context, we will also omit the subscript ``$T$''. We may define a non-zero complex number $p(\chi,(T,h))$, called {\it CM-period}, as in Sect.\ $1$ of \cite{Harris93} and the appendix of \cite{harrisappendix}, to which we refer for details: It is defined as the ratio between a certain deRham-rational vector and a certain Betti-rational vector inside the cohomology of the Shimura variety with coefficients in a local system. As such, it is well-defined modulo $E_T(\chi)^{\times}$. Recall the $\sigma$-twisted Shimura datum, $({}^\sigma T,{}^\sigma h)$, $\sigma\in {\rm Aut}(\C)$, cf.\ \cite{milne}, II.4, \cite{langlands}. By taking ${\rm Aut}(\C)$-conjugates of the aforementioned rational vectors, we can define the family $\{p({}^\sigma\chi,({}^\sigma T,{}^\sigma h))\}_{\sigma\in {\rm Aut}(\C)}$, such that if $\sigma$ fixes $E_{T}(\chi)$ then $p({}^\sigma\chi,({}^\sigma T,{}^\sigma h))=p(\chi,(T,h))$, i.e., in view of Rem.\ \ref{rem:Ealg}, $\{p({}^\sigma\chi,({}^\sigma T,{}^\sigma h))\}_{\sigma\in {\rm Aut}(\C)}$ defines an element in $\C^{|J_{E_T(\chi)}|}\cong E_T(\chi)\otimes_\Q\C$. The following proposition holds ${\rm Aut}(\C)$-equivariantly as interpreted for the family $\{p({}^\sigma\chi,({}^\sigma T,{}^\sigma h))\}_{\sigma\in {\rm Aut}(\C)}$:\\

\begin{prop}\label{relation CM-period}
Let $T$ and $T'$ be two tori defined over $\Q$ both endowed with a special Shimura datum $(T,h)$ and $(T',h')$ and let $u:(T',h')\rightarrow (T,h)$ be a homomorphism between them. Let $\chi$ be an algebraic Hecke character of $T(\A_\Q)$ and put $\chi':=\chi\circ u$, which is an algebraic Hecke character of $T'(\A_\Q)$. Then we have:
\begin{equation}\nonumber
p(\chi,(T,h)) \sim_{E_T(\chi)} p(\chi',(T',h')).
\end{equation}
Interpreted as families, this relation is equivariant under the action of ${\rm Aut}(\C)$. 
\end{prop}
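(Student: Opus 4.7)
The plan is to reduce this to the functoriality of the construction of CM-periods under pullback along morphisms of Shimura data, as recorded in \cite{harrisappendix}. Recall that, by definition, $p(\chi,(T,h))$ measures the discrepancy between two rational structures on the one-dimensional space of automorphic forms on $Sh(T,h)$ transforming under $T(\A_\Q)$ by $\chi$: the automorphic/de Rham rational structure coming from the canonical model of the automorphic line bundle attached to $\chi$ over $Sh(T,h)$, and the Betti rational structure, both normalized over the field $E(\chi)$. The class of $p(\chi,(T,h))$ in $\CC^\times/E(\chi)^\times$ is characterized, up to $E(\chi)^\times$, by its compatibility with pullback of these structures under morphisms of Shimura data.

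First, I would observe that $u:(T',h')\to(T,h)$ induces a morphism $Sh(u):Sh(T',h')\to Sh(T,h)$ of Shimura varieties, defined over a number field contained in $E(T',h')\cdot E(T,h)$, and hence contained in $E(\chi)\cdot E(\chi')$. Second, pullback along $Sh(u)$ sends the automorphic line bundle $\mathcal{L}_\chi$ on $Sh(T,h)$ to $\mathcal{L}_{\chi\circ u}=\mathcal{L}_{\chi'}$ on $Sh(T',h')$, and it does so compatibly with both the de Rham/automorphic and Betti rational structures used to normalize the periods. Concretely, a section of $\mathcal{L}_\chi$ that is $E(\chi)$-rational with respect to the de Rham/automorphic structure pulls back to a section that is rational with respect to the corresponding structure on $\mathcal{L}_{\chi'}$, and similarly for the Betti structure.

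Since the CM-period is, by its very definition, the ratio of the two normalizations, this functoriality yields an equality
\begin{equation}\nonumber
p(\chi,(T,h)) \sim_{E(\chi)} p(\chi',(T',h'))
\end{equation}
in $\CC^\times/E(\chi)^\times$, noting that $E(\chi')\subseteq E(\chi)$ because the values of $\chi'_f$ lie in the field generated by the values of $\chi_f$ and $E(\chi')$ inherits its reflex part from $E(\chi)$ via $u$.

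The main obstacle, rather than being conceptual, is bookkeeping: one must match the precise choices of trivializations and rational structures in \cite{harrisappendix} on the source and target Shimura data, and verify that the pullback along $Sh(u)$ respects these choices modulo $E(\chi)^\times$. Once that identification is made explicit, the proposition follows immediately from the definition of the CM-period as a comparison isomorphism; no new geometric or cohomological input is needed beyond the functoriality of canonical models and of the automorphic line bundles attached to algebraic Hecke characters.
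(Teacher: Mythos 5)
Your proposal is correct and takes essentially the same route as the paper: both observe that the CM-period is the ratio of a de Rham/automorphic-rational structure to a Betti-rational structure, and that both structures are compatible with pullback along a morphism of Shimura data, so the period classes agree modulo $E(\chi)^\times$. The paper simply states this in one sentence and points to relation $(1.4.1)$ of \cite{Harris93} for the details you describe.
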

\begin{proof} 
This is due to the fact that both the Betti-structure and the deRham-structure commute with the pullback map on cohomology. We refer to \cite{Harris93}, in particular relation $(1.4.1)$ for details.
\end{proof}
If $\Psi$ a set of embeddings of $F$ into $\C$ such that $\Psi\cap \Psi^{c}=\emptyset$, one can define a special Shimura datum $(T_{F},h_{\Psi})$ where $T_{F}:=R_{F/\Q}(\mathbb{G}_{m})$ and $h_{\Psi}:R_{\C/\R}(\mathbb{G}_{m,\C}) \rightarrow T_{F,\R}$ is a homomorphism such that over $\imath\in J_{F}$, the Hodge structure induced by $h_{\Psi}$ is of type $(-1,0)$ if $\imath\in \Psi$, of type $(0,-1)$ if $\imath\in \Psi^{c}$, and of type $(0,0)$ otherwise. In this case, for $\chi$ an algebraic Hecke character of $F$, we write $p(\chi,\Psi)$ for $p(\chi, (T_{F},h_{\Psi}))$ and abbreviate $p(\chi, \imath):=p(\chi,\{\imath\})$. We also define the (finite) compositum of number fields $E_F(\chi):=\prod_\Psi E_{T_F}(\chi)$.

\begin{lem}\label{Lemma CM} 
Let $\imath\in \Sigma$ and let $\Psi$ and $\Psi'$ be disjoint sets of embeddings of $F$ into $\C$ such that $\Psi\cap \Psi^{c}=\emptyset= \Psi'\cap \Psi'^{c}$. Let $\chi$ and $\chi'$ be algebraic Hecke characters of $\GL_1(\A_F)$, whose archimedean component is not a power of the norm $\|\cdot\|_\infty$, and recall the algebraic Hecke character $\psi$ from Sect.\ \ref{sect:fields}. Then,
\begin{enumerate}[label=(\alph*)] 
\item $p(\chi, \Psi\sqcup \Psi') \sim_{E_F(\chi)} p(\chi,\Psi) \ p(\chi, \Psi')$
\item  $p(\chi\chi',\Psi) \sim_{E_F(\chi)E_F(\chi')} p(\chi,\Psi) \ p(\chi',\Psi) $
\item If $\chi$ is conjugate selfdual, then $p(\widecheck{\chi},\bar{\imath})\sim_{E_F(\chi)} p(\widecheck{\chi},\imath)^{-1}.$ 
\item $p(\widecheck{\psi},\bar{\imath})\sim_{E_F(\psi)} (2\pi i)p(\psi,\imath)^{-1}$.
\end{enumerate}
Interpreted as families, these relations are equivariant under the action of ${\rm Aut}(\C)$.
\end{lem}
\begin{proof}
The first two assertions are proved in \cite{grob_lin}, Prop.\ 4.4. For (c), observe that by Lem.\ $1.6$ of \cite{H97}, we have $p(\widecheck{\chi},\bar{\imath})\sim_{E_F(\chi)} p(\widecheck{\chi}^{c},\imath)$. Then Prop.\ $1.4$ of \cite{H97} and the fact that $\chi \chi^{c}$ is trivial imply
$$p(\widecheck{\chi}^{c},\imath)p(\widecheck{\chi},\imath)\sim_{E_F(\chi)}p(\widecheck{\chi \chi^{c}},\imath)\sim_{E_F(\chi)}1.$$ 
Similarly, for the last assertion, we have
$p(\widecheck{\psi},\bar{\imath})\sim_{E_F(\psi)} p(\widecheck{\psi}^{c},\imath)\sim_{E_F(\psi)} p(\widecheck{\psi}^{-1}\|\cdot \|^{-1},\imath)\sim_{E_F(\psi)}(2\pi i)p(\psi,\imath)^{-1}$ where the last step is due to the fact that $p(\|\cdot\|,\imath)\sim_{\Q}(2\pi i)^{-1}$ (cf. $1.10.9$ of \cite{H97}). 
\end{proof}

\subsubsection{Relation to critical Hecke $L$-values}
It is proved by Blasius that the CM-periods are related to Hecke $L$-values (cf.\  \cite{Blasius}, Thm.\ 7.4.1 and Thm.\ 9.2.1). We state Blasius's result in the form of Prop.\ $1.8.1$ of \cite{Harris93}, resp.\ \cite{harrisappendix}, Prop.\ A.10 (see also erratum on page $82$ of \cite{H97} and \S 3.6 therein):

\begin{thm}\label{CM}
Let $\chi$ be an algebraic Hecke character of $F$ with infinity type $z^{a_{v}}\bar{z}^{b_{v}}$ at $v\in S_\infty$, such that $a_{v}\neq b_{v}$ for all $v$. We define $\Psi_{\chi}:=\{\imath_v\mid a_{v}<b_{v}\}\cup \{\bar{\imath}_v\mid a_{v}>b_{v}\}$. Then, if $m$ is critical for $L(s,\chi)$, we have for any finite set of places $S$, containing $S_\infty$,
$$L^S(m, \chi)\sim_{E_F(\chi)} (2\pi i)^{dm}p(\widecheck{\chi},\Psi_{\chi}).$$
Interpreted as families, this relation is equivariant under the action of ${\rm Aut}(\C/F^{Gal})$. 
\end{thm}

\subsection{Arithmetic automorphic periods}\label{arithmetic automorphic period}
\subsubsection{A theorem of factorization}

In this paper we focus on cohomological conjugate self-dual, cuspidal automorphic representations $\Pi$ of $\GL_{n}(\Acm)$, which satisfy the following assumption: 

\begin{hyp}\label{descent} 
For each $I=(I_{\imath})_{\imath\in \Sigma}\in \{0,1,\cdots,n\}^{|\Sigma|}$ there is a unitary group $H_I$ over $F^{+}$ as in \S \ref{sect:alggrp} of signature $(n-I_\imath,I_\imath)$ at $v=(\imath,\bar\imath)\in S_\infty$ such that the global $L$-packet $\prod(H_I,\Pi)$ is non-empty. Moreover, if $\pi\in \prod(H_I,\Pi)$, then the packet also contains all the representations $\tau_\infty\otimes\pi_f$, $\tau_\infty$ running through the discrete series representation of $H_{I,\infty}$ of the same infinitesimal character of $\pi_\infty$.
\end{hyp}

\begin{rmk}\label{rmk:des}
This hypothesis is always satisfied, if $n$ is odd. For $n$ even it is also known to hold, if $\Pi_\infty$ is cohomological with respect to a regular representation and $\Pi_v$ is square-integrable at a non-archimedean place $v$ of $F^+$, which is split in $F$. Moreover, it is well-known that a cohomological conjugate self-dual, cuspidal automorphic representation of $\GL_{n}(\Acm)$ always descends to a cohomological cuspidal automorphic representation of the quasi-split unitary group $U^*_n$ of rank $n$ over $F^+$, cf.\ \cite{harris-labesse} and \cite{mok}, Cor.\ 2.5.9 (and the argument in \cite{grob_harris_lapid}, \S 6.1). In contrast to this positive result, there are also cohomological conjugate self-dual, cuspidal automorphic representations $\Pi$ of $\GL_{n}(\Acm)$, which do not satisfy Hyp.\ \ref{descent}: As the simplest counterexample, take an everywhere unramified Hilbert modular cusp form for a real quadratic field $F^+$ not of CM-type. The quadratic base change of the corresponding automorphic representation to a CM-quadratic extension $F$ does not descend to a unitary group of signature $(1,1)$ at one archimedean place and $(2,0)$ at the other.
\end{rmk}
If $\Pi$ satisfies Hyp.\ \ref{descent}, a family of \textit{arithmetic automorphic periods} $\{P^{({}^{\sigma}I)}({}^{\sigma}\Pi)\}_{\sigma\in {\rm Aut}(\C)}$ can then be defined as the Petersson inner products of an ${\rm Aut}(\C)$-equivariant family of arithmetic holomorphic automorphic forms as in (2.8.1) of \cite{H97}, or Definition 4.6.1 of \cite{jie-thesis}. The following result is proved in \cite{linfactorization}, Thm.\ 3.3.

\begin{thm}[Local arithmetic automorphic periods]\label{thm:artihap} 
Let $\Pi$ be a cohomological conjugate self-dual cuspidal automorphic representation of $\GL_{n}(\Acm)$, which satisfies Hyp.\ \ref{descent}. We assume that either $\Pi$ is $5$-regular, or $\Pi$ is regular and Conj.\ \ref{nonvan} below is true. Then there exists a number field $E(\Pi)\supseteq \Q(\Pi_f) F^{Gal}$ (see \S \ref{sect:EPi} below) and families of {\rm local} arithmetic automorphic periods $\{P^{(i)}({}^{\sigma}\Pi,\imath)\}_{\sigma\in {\rm Aut}(\C)}$, for $0\leq i \leq n$ and $\imath\in\Sigma$, which are unique up to multiplication by elements in $E(\Pi)^\times$ such that \begin{equation}\label{local end}
P^{(0)}(\Pi,\imath)\sim_{E(\Pi)} p(\widecheck{\xi}_{\Pi},\bar\imath) \quad\quad {\it and} \quad\quad P^{(n)}(\Pi,\imath)\sim_{E(\Pi)} p(\widecheck{\xi}_{\Pi},\imath),
\end{equation} 
where $\xi_{\Pi}$ denotes the central character of $\Pi$, and satisfy the relation
\begin{equation}\label{eq:splitting}
 P^{(I)}(\Pi) \sim_{E(\Pi)} \prod\limits_{\imath\in\Sigma}P^{(I_{\imath})}(\Pi,\imath).
 \end{equation}
 In particular, we have \begin{equation}\label{local end 2}
P^{(0)}(\Pi,\imath) P^{(n)}(\Pi,\imath)\sim_{E(\Pi)} 1.
\end{equation} 
Interpreted as families, all relations are equivariant under the action of ${\rm Aut}(\C/F^{Gal})$. 
\end{thm}

In the statement of Thm.\ \ref{thm:artihap}, if $\Pi$ is not $5$-regular, we made use of the following 

\begin{conj}\label{nonvan}
Let $S$ be a finite set of non-archimedean places of $F^+$ and for each $v\in S$ let $\alpha_v:  \GL_1(\mathcal O_{F^+,v})\ra \CC^{\times}$ be a given continuous character. Let $\alpha_{\infty}$ be a conjugate self-dual algebraic character of $\GL_{1}(F\otimes_{\Q}\R)$.
Let $\Pi$ be a cohomological conjugate self-dual cuspidal automorphic representation of $\GL_{n}(\Acm)$, which satisfies Hyp.\ \ref{descent}.  Then there exists a Hecke character $\chi$ of $ \GL_1(\A_F)$, such that $\chi_{\infty}=\alpha_\infty$, $\chi_{|_{ \GL_1(\mathcal O_{F^+,v})}} = \alpha_v$, $v\in S$, and
\begin{equation}\label{nonvan1} 
L^{S}(\tfrac{1}{2},\Pi\otimes \chi) \neq 0. 
\end{equation}
\end{conj}

\subsubsection{The field $E(\Pi)$}\label{sect:EPi} 
Let $\pi\in \prod(H_I,\Pi)$. It is easy to see that the {\it field of rationality} $\QQ(\pi_f)$ of $\pi_f$, which, as we recall, is defined as the fixed field in $\C$ of the subgroup of $\sigma \in {\rm Aut}(\C)$ such that ${}^\sigma\pi_f \cong \pi_f$, coincides with $\QQ(\Pi_f)$ if the local $L$-packets that base change to $\Pi$ are singletons, and are simple finite extensions of $\QQ(\Pi_f)$ otherwise, see Prop.\ \ref{contained}.   However, because of the presence of non-trivial Brauer obstructions it is not always possible to realize $\pi_f$ over $\QQ(\pi_f)$.  By {\it field of definition} we mean a field over which $\pi_f$ has a model. The field $E(\Pi)$ in the statement of Thm. \ref{thm:artihap} may be taken to be the compositum of $F^{Gal}$ with fields of definition of the descents $\pi\in \prod(H_I,\Pi)$, $I=(I_{\imath})_{\imath\in \Sigma}\in \{0,1,\cdots,n\}^{|\Sigma|}$, as in Hyp.\ \ref{descent}, cf.\ \cite{linfactorization}, Thm.\ 2.2.: It follows from \cite{gro-seb2}, Thm.\ A.2.4, that these (finitely many) fields of definition exist and are number fields. In fact, they can be taken to be finite abelian extension of the respective $\QQ(\pi_f)$ {\it From now on, $E(\Pi)$ will stand for (any fixed choice of) such a field.}

\subsection{Automorphic split indices}
Let $n$ and $n'$ be two integers. Let $\Pi$ (resp. $\Pi'$) be a cohomological conjugate self-dual cuspidal automorphic representation of $G_{n}(\Acm)$ (resp.\ $G_{n'}(\Acm)$) with infinity type $\{z^{a_{v,i}} \bar{z}^{-a_{v,i}}\}_{1\leq i\leq n}$ (resp.\ $\{z^{b_{v,j}} \bar{z}^{-b_{v,j}}\}_{1\leq j\leq n'}$) at $v\in S_\infty$.

\begin{defn}\label{split automorphic}
For $0\leq i\leq n$ and $\imath_v\in\Sigma$, we define the {\it automorphic split indices}, cf.\ \cite{jie-thesis,harris-lin},
$$sp(i,\Pi;\Pi',\imath_v):=\#\{1\leq j\leq n'\mid -a_{v,n+1-i}>b_{v,j}>-a_{v,n-i}\}$$
and
$$sp(i,\Pi;\Pi',\bar\imath_v):=\#\{1\leq j\leq n' \mid a_{v,i}>-b_{v,j}>a_{v,i+1}\}.$$ 
\noindent Here we put formally $a_{v,0}=+\infty$ and $a_{v,n+1}=-\infty$. It is easy to see that
\begin{equation}\label{sp relation}
sp(i,\Pi^{c};\Pi'^{c},\imath_v)=sp(i,\Pi;\Pi',\bar{\imath}_v)=sp(n-i,\Pi;\Pi',\imath_{v}).
\end{equation}
Similarly, for $0\leq j\leq n'$, we define $sp(j,\Pi';\Pi,\imath_v):=\#\{1\leq i\leq n\mid -b_{v,n'+1-j}>a_{v,i}>-b_{v,n'-j}\}$ and $sp(j,\Pi';\Pi,\bar\imath_v):=\#\{1\leq i\leq n\mid b_{v,j}>-a_{v,i}>b_{v,j+1}\}$.
\end{defn}

\subsection{Translating Deligne's conjecture for Rankin--Selberg $L$-functions}\label{tensorp}

We resume the notation and assumptions from the previous section and we suppose moreover that $a_{v,i}+b_{v,j}\neq 0$ for any $v\in S_\infty$, $1\leq i\leq n$ and $1\leq j\leq n'$. \\\\
Conjecturally, there are motives $M=M(\Pi)$ (resp. $M'=M(\Pi')$) over $F$ with coefficients in a finite extension $E$ of $\Q(\Pi_f)$ (resp. $E'$ of $\Q(\Pi'_f)$), satisfying $L(s,R_{\cm/\Q}(M\otimes M'))=L(s-\tfrac{n+n'-2}{2},\Pi_f\times \Pi'_f)$, which is a variant of \cite{clozel}, Conj.\ 4.5. To make sense of this statement, the right hand side of the equation must first be interpreted as a function with values in $EE'\otimes_\Q\C\cong \C^{|J_{EE'}|}$: Arguing as in \cite{grob_harris_lapid}, \S 4.3, or in \cite{clozel}, Lem.\ 4.6, one shows that at $v\notin S_\infty$, the local $L$-factor $L(s-\frac{n+n'-2}{2},\Pi_v\times \Pi'_v)=P_v(q^{-s})^{-1}$ for a polynomial $P_v(X)\in EE'[X]$, satisfying $P(0)=1$, and one deduces that $L(s-\frac{n+n'-2}{2},{}^\sigma\Pi_v\times {}^\sigma\Pi'_v)={}^\sigma P_v(q^{-s})^{-1}$, where $\sigma$ acts on $P_v$ by application to its coefficients in $EE'$. In particular, for any finite set $S$ of places of $F$ containing $S_\infty$, the family $\{\prod_{v\notin S} L(s-\frac{n+n'-2}{2},{}^\sigma\Pi_v\times {}^\sigma\Pi'_v)\}_{\sigma\in {\rm Aut}(\C)}$ only depends on the restriction of the individual $\sigma$ to $EE'$, whence we may apply Rem.\ \ref{rem:Ealg}, in order to view it as an element of $\C^{|J_{EE'}|}\cong EE'\otimes_\Q\C$. It is this way, in which we will interpret $L^S(s-\tfrac{n+n'-2}{2},\Pi\times \Pi')$ as a $|J_{EE'}|$-tuple.\\\\
In \S \ref{construction of motive}, we shall indeed construct such motives $M=M(\Pi)$ and $M'=M(\Pi')$ attached to a large family of representations $\Pi$ and $\Pi'$ (and hence verify Clozel's conjecture, \cite{clozel}, Conj.\ 4.5, for these representations). It will turn out that $M$ (resp. $M'$) is regular, pure of rank $n$ (resp. $n'$) and weight $w=n-1$ (resp. $w'=n'-1$) whose field of coefficients may be chosen to be a suitable finite extension $E$ of $E(\Pi)$, resp.\ $E'$ of $E(\Pi')$. Moreover, the above condition on the infinity type is equivalent to the condition that the $(\tfrac{w+w'}{2},\tfrac{w+w'}{2})$ Hodge component of $M\otimes M'$ is trivial. We can hence apply Prop.\ \ref{Deligne period motivic} and obtain a relation between Deligne's periods $c^\pm(R_{F/\Q}(M\otimes M'))$ and our motivic periods $Q^{(i)}(M,\imath)$ and $Q^{(j)}(M',\imath)$.\\\\
It is predicted by the Tate conjecture (see Conj.\ 2.8.3 and Cor.\ 2.8.5 of \cite{H97} and Sect.\ $4.4$ of \cite{harris-lin}), that one has the fundamental {\it Tate-relation}: 
\begin{equation}\label{eq:fundrel}
P^{(i)}(\Pi,\imath) \sim_{E} Q^{(i)}(M(\Pi),\imath).
\end{equation}
Again, as for the comparison of motivic and automorphic $L$-functions above, the left hand side of this relation should be read as an element of $E\otimes_\Q\C$ as explained in Rem.\ \ref{rem:Ealg}. Let us assume for a moment that \eqref{eq:fundrel} is valid (and that its left hand side is defined).\\\\
One easily checks that our so constructed motive $M$ (resp. $M'$) has Hodge type $(-a_{n-i}+w/2,a_{n-i}+w/2)_{1\leq i\leq n}$ (resp. $(-b_{n'-j}+w'/2,b_{n'-j}+w'/2)_{1\leq j\leq n'}$) at $\imath$. We now see immediately from Def.\ \ref{split motivic} and Def.\ \ref{split automorphic} that $sp(i,\Pi;\Pi',\imath)=sp(i,M;M',\imath)$. Whence, recalling that for any critical point $s_0\in \tfrac{n+n'}{2}+\Z$ of $L(s,\Pi\times \Pi')$, and any $v\notin S_\infty$, $L(s_0,\Pi_v\times \Pi'_v)$ is the inverse of a polynomial expression $P_v(q^{-s})\in EE'[q^{-s}]$ of an integral power $s_0-\tfrac{n+n'}{2}$ of $q$, and hence in $EE'$, and recollecting all of our previous observations, we finally deduce that Deligne's conjecture, Conj.\ \ref{conj:Deligne}, for $R_{\cm/\Q}(M\otimes M')$ may be rewritten in purely automorphic terms as follows:

\begin{conj}\label{main conjecture}
Let $\Pi$ (resp. $\Pi'$) be a cohomological conjugate self-dual cuspidal automorphic representation of $G_{n}(\Acm)$ (resp. $G_{n'}(\Acm)$), which satisfies Hyp.\ \ref{descent}. Let $s_{0}\in \Z+\tfrac{n+n'}{2}$ be a critical point of $L(s,\Pi\times \Pi')$, and let $S$ be a fixed finite set of places of $F$, containing $S_\infty$. Then, the arithmetic automorphic periods $P^{(I)}(\Pi)$ and $P^{(I)}(\Pi')$ admit a factorization as in \eqref{eq:splitting} and
\begin{equation}
L^S(s_{0},\Pi\otimes \Pi') \sim_{E(\Pi)E(\Pi')} (2\pi i)^{nn's_{0}} \prod\limits_{\imath \in \Sigma}[\prod\limits_{0\leq i\leq n}P^{(i)}(\Pi,\imath)^{sp(i,\Pi;\Pi',\imath)}\prod\limits_{0\leq j\leq n'}P^{(j)}(\Pi',\imath)^{sp(j,\Pi';\Pi,\imath)}].
\end{equation}
Interpreted as families, this relation is equivariant under action of ${\rm Aut}(\C/F^{Gal})$.
\end{conj}

\subsection{About the main goals of this paper and a remark on the strategy of proof}\label{sect:goals}

In this paper we will establish Conj.\ \ref{conj:Deligne} for the tensor products of motives $M(\Pi)\otimes M(\Pi')$ attached via Thm.\ \ref{thm:clozel} to a large family of cohomological conjugate self-dual cuspidal automorphic representations $\Pi$ and $\Pi'$ of $G_{n}(\Acm)$, resp.\ $G_{n'}(\Acm)$. To this end we will first prove Conj.\ \ref{main conjecture} for those $\Pi$ and $\Pi'$. In view of Prop.\ \ref{Deligne period motivic} this result will reduce a complete proof of Deligne's original conjecture, Conj.\ \ref{conj:Deligne}, for the motives attached to such $\Pi$ and $\Pi'$ (and with coefficients in a number field containing $F^{Gal}$) to a proof of the Tate relation \eqref{eq:fundrel}.\\\\ 
In fact, as our second main result, we will prove \eqref{eq:fundrel} by showing a refined decomposition of the local arithmetic automorphic periods $P^{(i)}(\Pi,\imath)$, which mirrors \eqref{eq:Qperiods}: Recall that the motivic periods on the right-hand-side of the Tate relation were defined as a product
$$Q^{(i)}(M(\Pi),\imath)=Q_{0}(M(\Pi),\imath)Q_{1}(M(\Pi),\imath)\cdots Q_{i}(M(\Pi),\imath).$$
As our second main result we will define factors $P_i(\Pi,\imath)$ in \S\ref{sect:thm}, which are attached to a certain (canonical) descent $\pi(i)$ of $\Pi$ to a (non-canonical) unitary group and show that, up to a scalar contained in an extension $E\supset F^{Gal}$ explicitly attached to $\Pi$ and the $\pi(i)$'s, we have
\begin{equation}\label{eq:factor}
P^{(i)}(\Pi,\imath) \sim_{E} P_{0}(\Pi,\imath)P_{1}(\Pi,\imath)\cdots P_{i}(\Pi,\imath).
\end{equation}
The $P_i$ is essentially (but not quite) the {\it automorphic $Q$-period} of  $\pi(i)$ introduced in \S \ref{Qperiod}, and the Tate relation then comes down to a rather simple comparison, established in and recorded as Thm.\ \ref{main factorization}.\\\\
It is important to notice that, when $n$ and $n'$ are of the same parity, Conj.\ \ref{main conjecture} is in fact known at the ``near central'' critical point $s_0=1$ by third named author's thesis, see Thm.\ 9.1.1.(2).(i) of \cite{jie-thesis}:

\begin{thm}\label{automorphic Deligne near central}
Let $\Pi$ (resp. $\Pi'$) be a cohomological conjugate self-dual cuspidal automorphic representation of $G_{n}(\Acm)$ (resp. $G_{n'}(\Acm)$), which satisfies Hyp.\ \ref{descent}. If $n$ and $n'$ have the same parity and if $s_{0}=1$, Conj.\ \ref{main conjecture} is true, provided that {\rm (i)} the isobaric sum $(\Pi\eta^n)\boxplus (\Pi'^{c}\eta^n)$ is $2$-regular and that {\rm (ii)} either $\Pi$ and $\Pi'$ are both $5$-regular or $\Pi$ and $\Pi'$ are both regular and satisfy Conj.\ \ref{nonvan}.
\end{thm}
Obviously, in view of Thm.\ \ref{automorphic Deligne near central}, a major obstacle that remains to be overcome in this regard, is (i) to extend Thm.\ \ref{automorphic Deligne near central} to all critical values $s_0$ and (ii) to treat the case of $n$ and $n'$ with different parity. We will solve this problem in \S\ref{central value}, see in particular Thm.\ \ref{automorphic Deligne central} and -- as a final summary -- Thm.\  \ref{automorphic Deligne general}. 

\section{Shimura varieties, coherent cohomology and a motive}

\subsection{Shimura varieties for unitary groups}\label{Sdata}
Let $V=V_n$ and $H=U(V)$ be as defined in \S \ref{sect:alggrp}. Let $\Ss = R_{\CC/\RR} \mathbb{G}_{m,\CC}$, so that $\Ss(\RR) = \CC^\times$, canonically.  In this paper we will use period invariants, attached to a Shimura datum $(H,Y_V)$, as in \cite[\S 2.2]{H21}. Explicitly, the base point $y_V  \in Y_V$ is given by 

\begin{equation}\label{yV} y_{V,v}(z) = \begin{pmatrix}  (z/\bar{z})I_{r_v} & 0 \\ 0 & I_{s_v} \end{pmatrix} \end{equation}
The following lemma is then obvious: We record it here in order to define parameters for automorphic vector bundles in the next sections.  

\begin{lem}\label{stab}  
Let $y \in Y_V$.   Its stabilizer $K_y=:K_{H,\infty}$ in $H_\infty$ is isomorphic to $\prod_{v \in S_\infty} U(r_v)\times U(s_v)$.
\end{lem}
Unlike the Shimura varieties attached to unitary similitude groups, the Shimura variety $Sh(H,Y_V)$ attached to $(U(V),Y_V)$ parametrizes Hodge structures of weight $0$ -- the homomorphisms $y \in Y_V$ are trivial on the subgroup $\RR^\times \subset \CC^\times$ -- and are thus of abelian type but not of Hodge type. The reflex field $E(H,Y_V)$ is the subfield of $F^{Gal}$ determined as the stabilizer of the cocharacter $\kappa_V$ with $v$-component  $\kappa_{V,v}(z) = \begin{pmatrix}  zI_{r_v} & 0 \\ 0 & I_{s_v} \end{pmatrix}$.  In particular, if there is $v_0 \in S_\infty$ such that $s_{v_0} > 0$ but $s_v = 0$ for $v \in S_\infty \setminus \{v_0\}$ -- the type of unitary groups that we will be mainly interested in later -- then $E(H,Y_V)$ is the subfield $\imath_{v_0}(F) \subset \CC$. \\\\ 
We will also fix the following notation: Let $V'\subset V$ be a non-degenerate subspace of $V$ of codimension 1. We write $V$ as the orthogonal direct sum $V' \oplus V'_1$ and consider the unitary groups $H':=U(V')$ and $H'':=U(V')\times U(V'_{1})$ over $F^{+}$. Obviously, there are natural inclusions $H'\subset H''\subset H$, and a homomorphism of Shimura data
\begin{equation}\label{inclusionmap} 
(H'',Y_{V'}\times Y_{V'_1}) \hookrightarrow (H,Y_V).
\end{equation}
It is not necessarily the case that $V'_1$, as introduced above, and $V_1$ from \S \ref{sect:alggrp} are isomorphic as hermitian spaces, but the attached unitary groups $U(V'_1)$ and $U(V_1)$ are isomorphic.

 \subsection{Rational structures and (cute) coherent cohomology}\label{sect:RatCute}
\subsubsection{A characterization of cute coherent cohomology}
At each $v\in S_\infty$, we write as usual $\h_{v,\C}= \k_{H,v,\C} \oplus \p^-_v \oplus \p^+_v$ for the Harish-Chandra decomposition of the complex reductive Lie algebra $\h_{v,\C}$, and let
$$\p^+:= \oplus_v  \p^+_v, \quad\quad\p^- := \oplus_v  \p^-_v  \quad\quad{\rm and}\quad\quad \q:=\k_{H,\infty,\C}\oplus \p^-$$
so that
$$\h_{\infty,\C} = \k_{H,\infty,\C} \oplus \p^-  \oplus \p^+=\q\oplus \p^+.$$
Here $\p^+$ and $\p^-$ identify naturally with the holomorphic and anti-holomorphic tangent spaces to $Y_V$ at the point $y$, chosen in order to fix our choice of $K_y=K_{H,\infty}$. The Lie algebra $\q=\q_y$ is a complex parabolic subalgebra of $\h_{\infty,\C}$ with Levi subalgebra $\k_{H,\infty,\C}$. We let $W^\q$ be the set of attached Kostant representatives in the Weyl group of $H_\infty$, cf.\ \cite{bowa}, III.1.4. \\\\
Let  $\lambda = (\lambda_v)_{v \in S_\infty}$ be the highest weight of an irreducible finite-dimensional representation of $H_\infty$ as in \S \ref{sect:coh}. For a $w\in W^\q$, we may form the highest weight $\Lambda(w,\lambda):= w(\lambda+\rho_n)-\rho_n$, $\rho_n$ the half-sum of positive absolute roots of $H_\infty$, of a uniquely determined irreducible, finite-dimensional representation $\W_{\Lambda(w,\lambda)}$ of $K_{H,\infty}$ and we recall that its contragredient $\W^{\sf v}_{\Lambda(w,\lambda)}\cong \W_{\Lambda(w',\lambda^{\sf v})}$ is again of the above form for a uniquely determined Kostant representative $w'\in W^\q$, cf.\ \cite{bowa}, V.1.4. We will henceforth suppress the dependence of $\Lambda$ on $w$ and $\lambda$ in notation.\\\\
Recall from \cite{H90}, \S 2.1, that the representation $\W^{\sf v}_{\Lambda}$ defines an automorphic vector bundle $[\W^{\sf v}_{\Lambda}]$ on the Shimura variety $Sh(H,Y_V)$. Algebraicity of $\lambda$ implies that the canonical and sub-canonical extensions of the $H(\A_{F^+,f})$-homogeneous vector bundle $[\W^{\sf v}_{\Lambda}]$ give rise to coherent cohomology theories which are both defined over a finite extension of the reflex field, see cf.\ \cite{H90}, Prop.\ 2.8. We let $E(\Lambda)$ denote a number field over which there is such a rational structure.  (In general, there is a Brauer obstruction to realizing $\W^{\sf v}_{\Lambda}$ over the fixed field of its stabilizer in Gal$(\Qbar/\QQ)$, and we can take and fix $E(\Lambda)$ to be some finite, even abelian extension of the latter.)\\\\
Following the notation of \cite{guer-lin}, we denote by $H_!^*([\W^{\sf v}_{\Lambda}])$ the interior cohomology of $[\W^{\sf v}_{\Lambda}]$, cf.\ \cite{H90} \S (3.5.6). This is in contrast to \cite{H14,H90}, where the notation $\bar{H}$ was used.  Interior cohomology, being the image of a rational map, has a natural rational structure over  $E(\Lambda)$. It is well-known that every class in $H_!^*([\W^{\sf v}_{\Lambda}])$ is representable by square-integrable automorphic forms (\cite{H90}, Thm.\ 5.3) and that the $(\q, K_{H,\infty})$-cohomology of the space of cuspidal automorphic forms injects into $H_!^*([\W^{\sf v}_{\Lambda}])$ (\cite{H90}, Prop.\ 3.6). Let $H^*_{cute}([\W^{\sf v}_{\Lambda}]) \subseteq H_!^*([\W^{\sf v}_{\Lambda}])$ denote the subspace of classes, represented by cuspidal automorphic forms, contained in {\bf cu}spidal representations that are {\bf te}mpered at all places of $F^+$, where $H$ is unramified. Similarly, let $\CA_{cute}(H)$ be the corresponding space of cuspidal automorphic forms on $H(\A_{F^+})$, which give rise to representations which are tempered at all places of $F^+$, where $H$ is unramified. So, $H^*(\q,K_{ H,\infty},\CA_{cute}(H)\otimes \W^{\sf v}_\Lambda))\cong H^*_{cute}([\W^{\sf v}_{\Lambda}])$.

\begin{prop}\label{h,K-coh}
For a cuspidal automorphic representation $\pi$ of $H(\A_{F^+})$ the following assertions are equivalent:
\begin{enumerate}
\item $\pi \subset \CA_{cute}(H)$ and contributes non-trivially to $H^{*}_{cute}([\W^{\sf v}_{\Lambda}])$ for some $\Lambda=\Lambda(w)$, $w\in W^\q$.
\item $\pi$ is cohomological and its base change $BC(\pi)$ is an isobaric sum $\Pi=\Pi_1\boxplus...\boxplus\Pi_r$ of conjugate self-dual cuspidal automorphic representations $\Pi_i$.
\item $\pi$ is cohomological and tempered.
\end{enumerate}
If $\pi$ satisifies any of the above equivalent conditions, then $\pi_\infty$ is in the discrete series and $\pi$ occurs with multiplicity one in $L^2(H(F^+)\R_+\backslash H(\A_{F^+}))$.
\end{prop}
\begin{proof}
$(1)\Rightarrow (2)$: Let $\pi \subset \CA_{cute}(H)$ denote a cuspidal automorphic representation of $ H(\A_{F^+})$ that contributes to $H^*_{cute}([\W^{\sf v}_{\Lambda}])$. As $\Lambda^{\sf v}=\Lambda(w',\lambda^{\sf v})$ for a (unique) Kostant representative $w'\in W^\q$, it follows from reading the proof of \cite{gro-seb}, Thm.\ A.1 backwards, that there is an isomorphism of vector spaces
$$H^*(\h_\infty, K_{H,\infty}, \pi_\infty \otimes \cF^{\sf v}_\lambda) \cong H^*(\q,K_{ H,\infty},\pi_\infty\otimes \W^{\sf v}_\Lambda).$$
Therefore, $\pi_\infty$ is cohomological. Its base change $\Pi=BC(\pi)$ hence exists, cf.\ \S \ref{bc} for our conventions, and is a cohomological isobaric sum $\Pi=\Pi_1\boxplus...\boxplus\Pi_r$ of conjugate self-dual square-integrable automorphic representations $\Pi_i$ of some $G_{n_i}(\A_F)$. As $\pi \subset \CA_{cute}(H)$, $\pi_v$ is unramified and tempered outside a finite set of places $S$ of $F^+$ and hence so is $\Pi$ outside a finite set of places of $F$: Indeed, if $v\notin S$ is split in $F$, then $\Pi_v$ is tempered as noted in \S \ref{bc}. If, however, $v\notin S$ is not split, then $H(F^+_v)\cong U^*_n(F^+_v)$ is the quasisplit unitary group of rank $n$ over $F^+_v$ and so $\pi_v$ has a bounded local Arthur-parameter in the sense of \cite{mok}, Thm.\ 2.5.1. It follows that the unramfied representation $\Pi_v\cong BC(\pi)_v\cong BC(\pi_v)$ of $G_n(F_v)$ has a bounded local Langlands-parameter, whence $\Pi_v$ is tempered. Now, the argument of the proof of \cite{clozel}, Lem.\ 1.5, carries over verbatim, showing that the automorphic representation $\Pi$ must be isomorphic to an isobaric sum of unitary cuspidal automorphic representations. By the classification of isobaric sums, cf.\ \cite{jacshal2}, Thm.\ 4.4, these are nothing else than the isobaric summands $\Pi_i$ from above.

$(2)\Rightarrow (3)$: This is the contents of Rem.\ \ref{rmk:temp}.

$(3)\Rightarrow (1)$: We refer again to \cite{gro-seb}, Thm.\ A.1, which shows that a cohomological tempered cuspidal automorphic representation has non-trivial $(\q,K_{H,\infty})$-cohomology with respect to a suitable coefficient module $\W^{\sf v}_{\Lambda}$, $\Lambda=\Lambda(w)$, $w\in W^\q$, from which the assertion is obvious. 

In order to prove the last assertions of the proposition, recall from \cite{vozu}, p.\ 58 that a tempered cohomological representation of $H_\infty$, must be in the discrete series. Finally, it follows from Rem.\ 1.7.2 and Thm.\ 5.0.5 in \cite{KMSW} (and the fact that the continuous $L^2$-spectrum does not contain any automorphic forms) that every $\pi$, which satisfies the equivalent conditions of the proposition, occurs with multiplicity one in $L^2(H(F^+)\R_+\backslash H(\A_{F^+}))$.
 \end{proof}
 
 This result has several consequences. Firstly, we note
 
 \begin{prop}\label{prop:rat}
 The subspace $H^*_{cute}([\W^{\sf v}_{\Lambda}])$ of $H_!^*([\W^{\sf v}_{\Lambda}])$  is rational over $E(\Lambda)$.
\end{prop}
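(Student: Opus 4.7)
The plan is to realize $H^*_{cute}([\tilde\W^\vee_{\Lambda}])$ as a Hecke-equivariant subspace of $\bar{H}^*([\tilde\W^\vee_{\Lambda}])$ cut out by two conditions — cuspidality and essential temperedness at unramified finite places — and to show both conditions are preserved under the action of ${\rm Aut}(\C/E(\Lambda))$. Since the rational structure on $\bar H^*([\tilde\W^\vee_\Lambda])$ from \S \ref{review} is Hecke-equivariant, Galois-stability of the cute subspace will yield its $E(\Lambda)$-rationality by Galois descent.

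First I would invoke the coherent-cohomology analogue of Matsushima's formula developed in \cite{H90} to decompose
\begin{equation}\nonumber
\bar H^*([\tilde\W^\vee_\Lambda]) \cong \bigoplus_{\tilde\pi} m(\tilde\pi)\cdot H^*(\p^-_h,K_{\tilde H,\infty};\tilde\pi_\infty \otimes \tilde\W^\vee_\Lambda) \otimes \tilde\pi_f,
\end{equation}
indexed by the (finitely many) automorphic representations $\tilde\pi$ of $\tilde H(\A_\Q)$ contributing to interior cohomology in the given coefficient system. Within this decomposition, $H^*_{cute}([\tilde\W^\vee_\Lambda])$ is by definition the subsum over cuspidal $\tilde\pi$ whose unramified local components are essentially tempered. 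Next I would recall the standard fact, a consequence of the existence of an $E(\Lambda)$-rational model of the automorphic vector bundle $[\tilde\W^\vee_\Lambda]$ and the $\Qbar$-rationality of the Hecke correspondences on $Sh(\tilde H,\tilde X)$, that for any $\sigma \in {\rm Aut}(\C/E(\Lambda))$, the induced action on $\bar H^*([\tilde\W^\vee_\Lambda])$ carries the $\tilde\pi_f$-isotypic summand to the ${}^\sigma\tilde\pi_f$-isotypic summand, where ${}^\sigma\tilde\pi_f$ denotes the Waldspurger--Clozel translate (see \cite{waldsp}, \cite{clozel}).

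It then remains to verify that both defining conditions of the cute subspace are stable under $\sigma$-translation. Cuspidality is characterized in Hecke-theoretic terms by the vanishing of constant terms along all proper parabolics, and its preservation under Aut$(\C)$-translation for cohomological representations contributing to interior cohomology is a standard argument combining \cite{clozel} with the description of the Eisenstein cohomology complement. For essential temperedness at unramified non-archimedean places: since we work inside the cuspidal part and the representations are cohomological, the cited results of \cite{clozel,HT,Shin,car} — applied through the base change $BC(\tilde\pi)$ and the classification recalled in \S \ref{bc} — deliver temperedness for every cuspidal cohomological $\tilde\pi$ in question. Because $\sigma$-translation preserves cohomologicality and cuspidality, it also preserves this local temperedness property, so ${}^\sigma\tilde\pi$ is again cute.

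The main obstacle will be the bookkeeping around cuspidality-preservation: strictly speaking, what the Matsushima-type decomposition affords is a Hecke-spectral decomposition of interior cohomology, and one must separate the cuspidal contribution from residual/Eisenstein contributions in a Galois-equivariant way. This is the point where one has to invoke the Jacquet--Langlands-style characterization of cuspidal representations via vanishing of constant terms, together with the rationality of Levi-restriction maps in coherent cohomology, to see that cuspidality is indeed a Hecke-theoretic and therefore Galois-stable condition. Once this step is granted, the essential temperedness condition is automatic by the theorems quoted above, and the proposition follows.
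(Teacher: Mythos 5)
Your proposal has a genuine gap, and it is exactly the gap the paper's proof is engineered to sidestep. You try to show that both defining conditions of the cute subspace — cuspidality and essential temperedness at unramified finite places — are separately stable under ${\rm Aut}(\C/E(\Lambda))$, and you identify cuspidality-preservation as ``the main obstacle.'' But the authors explicitly remark, right after Proposition \ref{prop:rat}, that \emph{it has not been proved in general that the cuspidal subspace of $\bar H^*([\tilde\W^\vee_\Lambda])$ is rational over $E(\Lambda)$}. Your appeal to vanishing of constant terms plus ``rationality of Levi-restriction maps in coherent cohomology'' does not close this: interior cohomology is not presented as a kernel of $E(\Lambda)$-rational boundary maps in a way that isolates the cuspidal part Hecke-theoretically, and separating residual from cuspidal contributions in a Galois-equivariant manner is precisely the open problem.

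There is also a secondary error: you assert that ``cuspidal cohomological $\tilde\pi$ in question'' is automatically essentially tempered at unramified places, citing \cite{clozel,HT,Shin,car} through $BC(\tilde\pi)$. Those results give temperedness only when the isobaric summands of the base change are themselves \emph{cuspidal}, which is not automatic from cuspidality of $\tilde\pi$ (e.g.\ theta-type constructions yield cuspidal, non-tempered $\tilde\pi$). Your claimed implication ``$\sigma$ preserves cuspidality $\Rightarrow$ $\sigma$ preserves temperedness'' therefore does not follow.

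The paper's actual argument inverts the logical order and avoids both problems. Set $H^*_t([\tilde\W^\vee_\Lambda])\subset \bar H^*([\tilde\W^\vee_\Lambda])$ to be the subspace spanned by forms essentially tempered at all unramified non-archimedean places — no cuspidality imposed. This condition translates into a statement about Frobenius eigenvalues (all being a common multiple of $q$-numbers of the same weight), which is visibly ${\rm Aut}(\C)$-equivariant; hence $H^*_t$ is $E(\Lambda)$-rational. Then one observes that interior cohomology is spanned by essentially \emph{square-integrable} automorphic forms, and by \cite{clozel2} Prop.\ 4.10 the residual (non-cuspidal square-integrable) constituents of $\bar H^*([\tilde\W^\vee_\Lambda])$ are never essentially tempered at unramified places. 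So $H^*_t=H^*_{cute}$, and cuspidality is \emph{forced} by the temperedness condition rather than being checked independently. You would need this second step to make your approach sound, and once you have it, the direct attack on cuspidality-stability becomes unnecessary.
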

\begin{proof}
Let $H^*_{t}([\W^{\sf v}_{\Lambda}]) \subset H_!^*([\W^{\sf v}_{\Lambda}])$ denote the subspace of interior cohomology, which is represented by forms that are tempered at all non-archimedean places, where the ambient representation is unramified. The condition of temperedness at such a place is equivalent to the condition that the eigenvalues of Frobenius all be $q$-numbers of the same weight, hence is equivariant under Aut$(\C)$. Therefore, $H^*_{t}([\W^{\sf v}_{\Lambda}])$ is an $E(\Lambda)$-rational subspace, and it suffices to show that it coincides with $H^*_{cute}([\W^{\sf v}_{\Lambda}])$. Obviously, by the third item of Prop.\ \ref{h,K-coh},  $H^*_{cute}([\W^{\sf v}_{\Lambda}])\subseteq H^*_{t}([\W^{\sf v}_{\Lambda}])$, so we may complete the proof by showing that any square-integrable automorphic representation $\pi$ of $H(\A_{F^+})$ that contributes to $H^*_{t}([\W^{\sf v}_{\Lambda}])$ contributes to $H^*_{cute}([\W^{\sf v}_{\Lambda}])$. By \cite{clozel2}, Prop.\ 4.10, any such $\pi$ must be cuspidal. Now, the argument of the step ``$(1)\Rightarrow (2)$'' of the proof of Prop.\ \ref{h,K-coh} transfers verbatim, and we obtain that any such $\pi$ satisfies condition (2) of Prop.\ \ref{h,K-coh}. Hence, again by Prop.\ \ref{h,K-coh}, $\pi \subset \CA_{cute}(H)$, which shows the claim.
\end{proof}

\begin{rem} 
As far as we know, it has not been proved in general that the cuspidal subspace $H^*(\q,K_{ H,\infty},\CA_{cusp}(H)\otimes \W^{\sf v}_\Lambda))\cong H^*_{cusp}([\W^{\sf v}_{\Lambda}])$ of $H_!^*([\W^{\sf v}_{\Lambda}])$ is rational over $E(\Lambda)$,
but it is known for that for sufficiently general $\Lambda$ the interior cohomology is entirely cuspidal. In particular, this holds under the regularity assumptions of our main results.
\end{rem}

As another consequence of Prop.\ \ref{h,K-coh} we obtain

\begin{cor}\label{cor:uniquedegree}
For each $\W^{\sf v}_{\Lambda}$ as in Prop.\ \ref{h,K-coh}, there is a single degree $q = q(\Lambda)=\sum_{v\in S_\infty}q(\Lambda_v)$, the $q(\Lambda_v)$ being uniquely determined, such that $H^{q(\Lambda)}_{cute}([\W^{\sf v}_{\Lambda}]) \neq 0$. 
\end{cor}
\begin{proof}
Let $v\in S_\infty$. By \cite{H13}, Thm.\ 2.10, there is a unique discrete series representation $\pi_{\Lambda_v}$ of $H(F_v)$ and a unique degree $q(\Lambda_v)$ such that $H^{q(\Lambda_v)}(\q_v,K_{H,v},\pi_v\otimes \W^{\sf v}_{\Lambda_v})\neq 0$. Moreover, the latter $(\q_v,K_{H,v})$-cohomology is one-dimensional. Hence, by Prop.\ \ref{h,K-coh}, there are the following isomorphisms for the graded vector space
\begin{equation}\label{dbarcoh}
\begin{aligned}
H^{*}_{cute}([\W^{\sf v}_{\Lambda}]) &\cong \bigoplus_{\pi \subset \CA_{cute}(H)} H^{*}(\q,K_{ H,\infty},\pi_\infty\otimes \W^{\sf v}_\Lambda)\otimes \pi_f \\
&\cong \bigoplus_{\substack{\pi \subset \CA_{cute}( H)\\ \\ \pi_v \simeq \pi_{\Lambda_v}}} \bigotimes_{v\in S_\infty} H^{q(\Lambda_v)}(\q_v,K_{H,v},\pi_v\otimes \W^{\sf v}_{\Lambda_v})\otimes \pi_f \\
&\cong \bigoplus_{\substack{\pi \subset \CA_{cute}( H)\\  \pi_v \simeq \pi_{\Lambda_v}}} \pi_f.
\end{aligned}
\end{equation}
for the unique degrees $q(\Lambda_v)$. So, $H^{q}_{cute}([\W^{\sf v}_{\Lambda}]) =0$ unless $q= q(\Lambda)=\sum_{v\in S_\infty}q(\Lambda_v)$, in which case $H^{q(\Lambda)}_{cute}([\W^{\sf v}_{\Lambda}]) $ is described by \eqref{dbarcoh}.
\end{proof}

\subsubsection{The field $E(\pi)$}\label{sect:E(pi)}
Let $\pi \subset \CA_{cute}(H)$ be as in the statement of Prop.\ \ref{h,K-coh}. Recall that the $(\h_\infty, K_{H,\infty},H(\A_{F^+,f}))$-module of smooth and $K_{H,\infty}$-finite vectors in $\pi$ may be defined over a number field $E(\pi)\supseteq\Q(\pi_f)$, cf.\ \cite{H13} Cor.\ 2.13 \& Prop.\ 3.17. (Here we use that $\pi$ has multiplicity one in the $L^2$-spectrum, cf.\ Prop.\ \ref{h,K-coh}, in order to verify the assumption of \cite{H13} Prop.\ 3.17. See also the erratum to \cite{H13}.) We choose $E(\pi)$ to contain the compositum $F^{Gal}E(\Lambda)$, and refer to these rational structures as the {\it deRham-rational structures} on $\pi$. A function inside this deRham-rational structure is said to be {\it deRham-rational}. We obtain

\begin{lem}\label{contained}
$\Q(BC(\pi)_f^{\sf v})=\Q(BC(\pi)_f)\subseteq E(\pi)$.
\end{lem}
\begin{proof}
Strong multiplicity one implies that $\Q(BC(\pi)_f)=\Q(BC(\pi)^{S})$, where $S$ is any finite set of places containing $S_\infty$ and the places where $BC(\pi)$ ramifies. Hence, $\Q(BC(\pi)_f)=\Q(BC(\pi)^S)=\Q(BC(\pi)^{S,{\sf v}})\subseteq E(\pi^S)$, where the last inclusion is due to \cite{gan-ragh}, Lem.\ 9.2, the definition of base change and the definition of $E(\pi)$. Invoking strong multiplicity one once more, $\Q(BC(\pi)_f^{\sf v})=\Q(BC(\pi)_f)\subseteq E(\pi)$.
\end{proof}

\begin{lem}\label{lem:unique}
Let $\pi \subset \CA_{cute}(H)$ be an irreducible representation, which contributes non-trivially to $H^{*}_{cute}([\W^{\sf v}_{\Lambda}])$. Then, for each $\sigma\in {\rm Aut}(\C/E(\Lambda))$, there is a unique cohomological tempered cuspidal automorphic representation ${}^\sigma\!\pi$ of $H(\A_{F^+})$, such that $({}^\sigma\!\pi)_f\cong {}^\sigma\!(\pi_f)$ and which contributes non-trivially to $H^{*}_{cute}([\W^{\sf v}_{\Lambda}])$.
\end{lem}
\begin{proof}
Existence follows from Prop.\ \ref{prop:rat}, Prop.\ \ref{h,K-coh} and \eqref{dbarcoh}, while uniqueness follows from \cite{H13}, Thm.\ 2.10, in combination with multiplicity one, see again Prop.\ \ref{h,K-coh}.
\end{proof}
Our definition of $E(\pi)$ leaves us some freedom to include in it any other appropriate choice of a number field. We will specify such an additional choice right before Conj.\ \ref{lvarch}, by adding a suitable number field, constructed and denoted $E_Y(\eta)$ in \cite{H13}, p.\ 2023, to $E(\pi)$. So far, any choice (subject to the above conditions) works.

\subsection{Construction of automorphic motives}\label{construction of motive}

Let $\Pi$ be a cohomological, conjugate self-dual, cuspidal automorphic representation of $\GL_{n}(\Acm)$, which satisfies Hyp.\ \ref{descent}. Choose  $I_0=(I_{\imath})_{\imath\in \Sigma} \in \{0,1,\cdots,n\}^{|\Sigma|}$ so that $I_{v_0} = 1$, for some fixed place $v_0$, and so that $I_v = 0$ for $v \neq v_0$. Then, the unitary group $H=H_{I_0}$ has local archimedean signature $(r_{v_0},s_{v_0}) = (n-1,1)$, and the attached group $H'$ from \S \ref{Sdata} above has signature $(r'_{v_0},s'_{v_0}) = (n-2,1)$, while for $v \neq v_0$ in $S_\infty$, the signatures are $(n,0)$ (resp. $(n-1,0)$). \\\\
By Hyp.\ \ref{descent}, $\Pi^{\sf v}$ descends to a cohomological tempered cuspidal automorphic representation $\pi$ of $H(\A_{F^+})$. Morover, for any $\pi = \pi_\infty \otimes \pi_f \in \prod(H,\Pi^{\sf v})$ the representation $\tau_\infty\otimes \pi_f$ belongs to $\prod(H,\Pi^{\sf v})$, whenever $\tau_\infty$ is a discrete series representation of $H_\infty$ with the same infinitesimal character as $\pi_\infty$. By Prop.\ \ref{h,K-coh}, each such $\tau_\infty\otimes \pi_f \in \prod(H,\Pi^{\sf v})$ has multiplicity one in $L^2(H(F^+)\R_+\backslash H(\A_{F^+}))$. (The duality is not a misprint; with the usual normalization it is needed in order to obtain the Galois representation attached to the original $\Pi$, rather than $\Pi^{{\sf v}}$.)\\\\
For a given $\pi_f$ that descends $\Pi_f$, the set of $\tau_\infty$ such that $\tau_\infty\otimes \pi_f \in \prod(H,\Pi^{\sf v})$ has cardinality $n$, cf.\ \S \ref{sect:finitereps}. In other words, let $(H,Y_V)$ be the Shimura datum defined in \S \ref{Sdata} and let $Sh(H,Y_V)$ be the corresponding Shimura variety.  There is a unique irreducible finite-dimensional representation $\cF_{\lambda}=\otimes_{v\in S_\infty}\cF_{\lambda_v}$ of $H_\infty$, as in \S \ref{sect:finitereps}, such that, for all $\tau_\infty$ as above, 
\begin{equation}\label{hn-1} 
\dim H^{n-1}(\h_\infty,K_{H,\infty},\tau_\infty\otimes  \cF^{\sf v}_{\lambda}) =1
\end{equation}
Combining this with our previous observations, this implies that 
\begin{equation}\label{motivedimension}  \dim \Hom_{H(\Atrf)}(\pi_f,H^{n-1}(Sh(H,Y_V),\tilde{\cF}^{{\sf v}}_\lambda)) = n
\end{equation}
where $\tilde{\cF}^{{\sf v}}_\lambda$ is the local system on $Sh(H,Y_V)$ attached to the representation $\cF^{{\sf v}}_\lambda$. 
\\\\
The representation $\cF_\lambda$ of $H(F^+)$ is defined over a number field $E(\lambda)$, which we may assume contains the reflex field $E(H,Y_V)=\imath_{v_0}(F)$ of the Shimura variety.  Thus, the cohomology space $H^{n-1}(Sh(H,Y_V),\tilde{\cF}^{{\sf v}}_\lambda)$ has a natural $E(\lambda)$-structure, the {\it Betti} cohomological structure. Letting $\CO(\lambda)$ denote the ring of integers of $E(\lambda)$, we can find a free $\CO(\lambda)$-submodule
$\mathcal M_\lambda \subset \cF_{\lambda}$ that generates the representation, and thus we have a local system in free $\CO(\lambda)$-modules
$$\tilde{\mathcal M}_\lambda^{{\sf v}} \subset \tilde{\cF}^{{\sf v}}_\lambda$$
over $Sh(H,Y_V)$.  
For any prime number $\ell$ and any divisor $\mathfrak{l}$ of $\ell$ in $\CO(\lambda)$ we let 
$$\tilde{\cF}^{{\sf v}}_{\lambda,\mathfrak{l}}:= \tilde{\mathcal M}^{{\sf v}}_\lambda\otimes_{\CO(\lambda)} E(\lambda)_\mathfrak{l} \cong \tilde{\mathcal M}^{{\sf v}}_{\lambda,\mathfrak{l}}\otimes_{\CO(\lambda)_\mathfrak{l}} E(\lambda)_\mathfrak{l}$$
denote the corresponding $\ell$-adic \'etale sheaf.
Then we have the \'etale comparison map
\begin{equation}\label{ladic}
H^{n-1}(Sh(H,Y_V),\tilde{\cF}^{{\sf v}}_{\lambda})\otimes_{E(\lambda)} E(\lambda)_\mathfrak{l} 
 \isoarrow H^{n-1}_{\textrm{{\it \'et}}}(Sh(H,Y_V),\tilde{\cF}^{{\sf v}}_{\lambda,\mathfrak{l}}).
\end{equation}
On the other hand, for any embedding $\imath:  E(\lambda) \hookrightarrow \C$, we have the de Rham comparison 
\begin{equation}\label{dR}
H^{n-1}(Sh(H,Y_V),\tilde{\cF}^{{\sf v}}_{\lambda})\otimes_{E(\lambda),\imath} \C 
 \isoarrow H^{n-1}_{dR}(Sh(H,Y_V),\tilde{\cF}^{{\sf v}}_{\lambda,dR})\otimes_{E(\lambda),\imath} \C.
\end{equation}
Here we let $\tilde{\cF}^{{\sf v}}_{\lambda,dR}$ denote the flat vector bundle over $Sh(H,Y_V)$ attached to the local system $\tilde{\cF}^{{\sf v}}_{\lambda}$ by the Riemann-Hilbert correspondence; the $E(\lambda)$ structure on $H^{n-1}_{dR}$ is derived from the canonical model of $Sh(H,Y_V)$ over $E(H,Y_V) \subset E(\lambda)$, and the rational structure on the flat vector bundle $\tilde{\cF}^{{\sf v}}_{\lambda,dR}$.\\\\  
It is well-known (cf., \cite{H97}, Prop.\ 2.2.7) that, for any $\lambda$, the Hodge filtration on the right hand side of \eqref{dR} has an associated graded composed of $n$ spaces of interior cohomology:
\begin{equation}\label{Hodge}
gr_F^{\bullet} H^{n-1}_{dR}(Sh(H,Y_V),\tilde{\cF}^{{\sf v}}_{\lambda,dR}) \cong\bigoplus_{q = 0}^{n-1} H_!^q([\W^{\sf v}_{\Lambda(q)}]),
\end{equation}
Here $\Lambda(q) = \HC(q) - \rho_n$, where $\HC(q)$ is the Harish-Chandra parameter of $\pi_{\lambda,q}$, cf.\ \S \ref{sect:coh}. For $q = 0, \dots, n-1$, we define $i(q) \in \Z$ by
$$H_!^q([\W^{\sf v}_{\Lambda(q)}])= gr_F^{i(q)}H^{n-1}_{dR}(Sh(H,Y_V),\tilde{\cF}^{{\sf v}}_{\lambda,dR}).$$
We now recall that the group $H(\Atrf)$ acts on the spaces in \eqref{ladic}, \eqref{dR}, and \eqref{Hodge} compatibly with the comparison isomorphisms.  Let $\pi_f$ be as before. It is defined over the number field $E(\pi) \supset E(\lambda)$, as introduced in \S \ref{sect:E(pi)}, and we define
$$M_{dR}(\pi_f) := \Hom_{H(\Atrf)}(\pi_f,H_{dR}^{n-1}(Sh(H,Y_V),\tilde{\cF}^{{\sf v}}_{\lambda,dR})\otimes E(\pi)).$$
$$M_B(\pi_f):= \Hom_{H(\Atrf)}(\pi_f,H^{n-1}(Sh(H,Y_V),\tilde{\cF}^{{\sf v}}_{\lambda})\otimes E(\pi)),$$
$$M_{\mathfrak{l}}(\pi_f) := \Hom_{H(\Atrf)}(\pi_f,H^{n-1}(Sh(H,Y_V),\tilde{\cF}^{{\sf v}}_{\lambda})\otimes E(\pi)_\mathfrak{l}),$$
Here we are abusing notation: The $\pi_f$ in each $\Hom$ space above is viewed as a vector space over the appropriate coefficient field by extension of scalars, namely $E(\pi)$, in the first two, and $E(\pi)_{\mathfrak{l}}$, in the third line. Clearly all three of the spaces $M_?(\pi_f)$ have the same dimension over their respective coefficient fields.  In fact, one may check (e.g., by the results recalled in part D of the proof of Prop.\ \ref{mult1} below), that $\dim_{E(\pi)} M_B(\pi_f) = n$.  More precisely, the Hodge filtration on $H^{n-1}_{dR}(Sh(H,Y_V),\tilde{\cF}^{{\sf v}}_{\lambda,dR})$ induces a decreasing filtration $F^{i}M_{dR}(\pi_f)$ on $M_{dR}(\pi_f)$, and the isomorphism $\eqref{Hodge}$ induces an isomorphism
$$gr_F^{\bullet}M_{dR}(\pi_f) = \bigoplus_{q=0}^{n-1}gr_F^{i(q)} M_{dR}(\pi_f)  \cong \bigoplus_{q = 0}^{n-1} \Hom_{H(\Atrf)}(\pi_f,H_!^q([\W^{\sf v}_{\Lambda(q)}])), $$
and each of the spaces $M^{i(q)}_{dR}(\pi_f):= gr_F^{i(q)} M_{dR}(\pi_f)$ is of dimension $1$ over $E(\pi)$. The following result now follows from our construction

\begin{thm}\label{thm:clozel}
The collection $(M_B(\pi_f), M_{dR}(\pi_f), \{M_{\mathfrak{l}}(\pi_f)\}_{\mathfrak l})$, together with the obvious comparison maps defines a regular, pure motive $M(\Pi)$ over $E(H,Y_V)$ with coefficients in the finite extension $E(\pi)$ of $\Q(\Pi_f)$.  More precisely, the data satisfy the conditions of Definition \ref{definitionmotive}, with the exception of (i) (the infinite Frobenius); see \ref{Finfty} below.

Moreover, if $\Pi'$ is another cohomological, conjugate self-dual, cuspidal automorphic representation of $\GL_{n'}(\Acm)$, which satisfies Hyp.\ \ref{descent}, then 
$$L(s,M(\Pi)\otimes M(\Pi'))=L(s-\tfrac{n+n'-2}{2},\Pi_f\times \Pi'_f),$$
interpreted as $E(\pi)E(\pi')\otimes_\Q\C$-valued functions as in \S\ref{tensorp}.
\end{thm}  

If we recall that $E(H,Y_V)\cong F$, putting $n'=1$ in Thm.\ \ref{thm:clozel} proves \cite{clozel}, Conj.\ 4.5, for the conjugate self-dual, cuspidal automorphic representations $\Pi$ at hand.\\\\ 
We remark, however, that the motive $M(\Pi)$ depends on the choice of the place $v_0$, as so does the Shimura variety $Sh(H,Y_V)$. In view of \S\ref{sect:alggrp} and \cite{milne-suh}, Thm.\ 1.3, replacing $v_0$ by a different choice $v_1$ means to descend to the unitary group ${}^\sigma H$ underlying the $\sigma$-twisted Shimura variety ${}^\sigma Sh(H,Y_V)$, where $\sigma$ is any complex automorphism such that $\sigma^{-1}\circ \imath_{v_0} = \imath_{v_1}$. Hence, upon applying restriction of scalars, one obtains a motive $R_{E(H,Y_V)/\Q}(M(\Pi))$ over $\Q$, which is in fact independent of the choice of $v_0$. \\\\
At least if $F^+ \neq \Q$, the motive $M(\Pi)$ can be identified with a direct summand in the cohomology of a certain abelian scheme over a locally symmetric space $S'(H)$ isomorphic over $\Qbar$ to $Sh(H,Y_V)$ -- but with algebraic structure inherited from an embedding in the PEL Shimura variety attached to a similitude group containing $H$. 

\subsubsection{The automorphic version of $F_\infty$}\label{Finfty}  It is most convenient to take complex conjugation of differential forms as a surrogate for
the operator $F_{B,\imath}$ of Definition \ref{definitionmotive}.  For the reason explained in \cite[Remark 3.5]{H21}, this is not quite right.  This is why the
automorphic $Q$-periods of \S \ref{Qperiod}, which arise naturally in the calculation of $L$-functions, do not quite correspond to the motivic periods of
\S \ref{motivic periods}.  We return to this point in \S \ref{Qperiod} and in \S \ref{local period definition}.

\section{Periods for unitary groups and the Ichino-Ikeda-Neal Harris conjecture}

\subsection{GGP-periods, pairings for unitary groups, and a recent theorem}\label{sect:GGPunit}

Let $V$, $V'$, $V'_1$, $H$, $H'$, $H''$ be as in \S \ref{Sdata}. The usual Ichino-Ikeda-N. Harris conjecture considers the inclusion $H'\subset H$. However, in view of \eqref{inclusionmap} it is sometimes more convenient to consider the inclusion $H''\subset H$ instead, see \cite{H13} and \cite{H14}, and we are going to use both points of view in this paper. In this section we take the opportunity to discuss the relations of the associated periods for the two inclusions $H'\subset H$ and $H''\subset H$. We warn the reader that our notation here differs slightly from \cite{H13} and \cite{H14}.\\\\
Let $\pi$ (resp. $\pi'$) be a cohomological tempered cuspidal automorphic representation of $H(\Atr)$ (resp.\ $H'(\Atr)$). Let $\xi$ be a Hecke character on $U(V_{1})(\Atr)$ (recall that $U(V_1)$ is independent of the hermitian structure on $V_1$, \S \ref{sect:alggrp}). We write $\pi'':=\pi' \otimes \xi$, which is a tempered cuspidal automorphic representation of $H''(\Atr)$. Moreover, we fix a Haar measure $dh:=\prod_{v} dh_v$ on $H(\Atr)$, normalized as in \cite{H13}, \S 4.2, adding the (compatible) convention that $vol_{dh}(U(V_1)(F^+)\backslash U(V_1)(\A_{F^+}))=1$. This defines measures on $H'(\Atr)$ and $H''(\Atr)$ accordingly.\\\\
For  $f_{1}, f_{2}\in \pi$ the {\it Petersson inner product} on $\pi$ is defined as usual as
$$ \<f_{1},f_{2}\>:=\int_{H(\tr)Z_{H}(\Atr)\backslash H(\Atr)} f_{1}(h)\overline{f_{2}(h)} \ dh.$$
Analogously, we may define the Petersson inner product on $\pi'$ and $\pi''$. Next, for $f\in \pi$, $f'\in \pi'$ we put
$$I^{can}(f,f'):=\int\limits_{H'(\tr)\backslash H'(\Atr)} f(h')f'(h') \ dh' .$$
Then it is easy to see that $I^{can} \in \Hom_{H'(\Atr)}(\pi\otimes \pi',\C)$. With this notation the {\it GGP-period} for the pair $(\pi,\pi')$ 
(called the Gross-Prasad period in \cite{H13}) is defined as 
$$\CP(f,f'):=\cfrac{|I^{can}(f,f')|^{2}}{\<f,f\>\ \<f',f'\>}.$$
We can similarly define a $H''(\Atr)$-invariant linear form on $\pi \otimes \pi''$, which we will also denote by $I^{can}$, as
$$I^{can}(f,f''):=\int\limits_{H''(\tr)\backslash H''(\Atr)} f(h'')f''(h'') \ dh'' \text{ for } f\in \pi, f''\in \pi'',$$
leading to a definition of the {\it GGP-period} for the pair $(\pi,\pi'')$ as 
$$\CP(f,f''):=\cfrac{|I^{can}(f,f'')|^{2}}{\<f,f\>\ \<f'',f''\>}.$$
Let $\xi_\pi$, resp. let $\xi_{\pi'}$, be the central character of $\pi$, resp.\ $\pi'$. We assume that 

\begin{equation}\label{central extra U(1)}
\xi_{\pi}^{-1}=\xi_{\pi'}\xi 
\end{equation} 
(resembling equation $(\Xi)$ on page 2039 of \cite{H13}). Then, without restriction of generality, we may write a $f'' \in \pi''$, as $f''=f'\cdot \xi$, and one verifies easily that with our normalizations
$$I^{can}(f,f'')=I^{can}(f,f''|_{H'(\Atr)})=I^{can}(f,f')$$ 
and 
$$\<f'',f''\>=\<f''|_{H'(\Atr)},f''|_{H'(\Atr)}\>= \<f',f'\>.$$ We conclude that:
\begin{lem}\label{add U1}
$$\CP(f,f'')=\CP(f,f''|_{H'(\Atr)})=\CP(f,f').$$
Moreover, one gets $\Hom_{H'(\Atr)}(\pi\otimes \pi',\C) \cong \Hom_{H''(\Atr)}(\pi\otimes \pi'',\C)$.
\end{lem}
We will also need a local version of the above pairings. To this end, choose $f \in \pi$, $f' \in \pi'$, and assume they are factorizable as
$f = \otimes f_v, f' = \otimes f'_v$ with respect to the restricted tensor product factorizations
\begin{equation}\label{422} 
\pi \cong \otimes'_v \pi_v, \quad\quad \pi' \cong \otimes'_v \pi'_v.  
\end{equation}
Outside a finite set $S\supset S_\infty$ of places of $F^+$, we assume $\pi_v$ and $\pi'_v$ are unramified, and $f_v$ and $f'_v$ are the normalized
spherical vectors, i.e., the unique spherical vector taking value $1$ at the identity element.  We choose inner products $\<\cdot,\cdot\>_{\pi_v}, \<\cdot,\cdot\>_{\pi'_v}$
on each of the unitary representations $\pi_v$ and $\pi'_v$ such that at an unramified place $v$, the local normalized spherical vector in $\pi_v$ or $\pi'_v$ has norm $1$. For each place $v$ of $F^+$, let
$$c_{f_v}(h_v) := \<\pi_v(h_v) f_v,f_v\>_{\pi_v} \quad\quad\quad c_{f'_v}(h'_v) := \<\pi'_v(h'_v) f'_v,f'_v\>_{\pi'_v},  ~h_v \in H_v, h'_v \in H'_v,$$
and define
$$I_v(f_v,f'_v) := \int_{H'_v} c_{f_v}(h'_v) c_{f'_v}(h'_v) dh'_v  \quad\quad\quad I^*_v(f_v,f'_v) := \frac{I_v(f_v,f'_v)}{c_{f_v}(1)c_{f'_v}(1)}.$$
Neal Harris proves that these integrals converge since $\pi$ and $\pi'$ are locally tempered at all places.\\\\ 
The GGP-periods and local pairings are interconnected by the Ichino-Ikeda-N.Harris conjecture, which is now a theorem: In order to state it, denote the base change of the cohomological  tempered cuspidal automorphic representation  $\pi\otimes\pi'$ of  $H(\A_{F^+})\times H'(\A_{F^+})$ to $G_n(\A_{F})\times G_{n-1}(\A_{F})$ by $\Pi\otimes\Pi'$. We define
\begin{equation}\label{421} \CL^S(\Pi,\Pi'): = \frac{L^S(\tfrac{1}{2},\Pi\otimes \Pi')}{L^S(1,\Pi,{\rm As}^{(-1)^n})L^S(1,\Pi',{\rm As}^{(-1)^{n-1}})},\end{equation}
where $L^S(1,\Pi,{\rm As}^\pm)$ denotes the partial Asai $L$-function of the appropriate sign and we let
$$\Delta_{H}:= L^{S}(1, \eta) \zeta(2) L(3,\eta) ....L(n, \eta^{n}).$$
The Ichino-Ikeda-N.Harris conjecture for unitary groups is now the following theorem

\begin{thm} \label{conjecture II} 
Let $f \in \pi$, $f' \in \pi'$ be factorizable vectors as above. Then there is an integer $\beta$ (depending on the Arthur-Vogan-packets containing $\pi$ and $\pi'$), such that
$$\CP(f,f') = 2^{\beta}\Delta_{H} \CL^S(\pi,\pi') \ \prod_{v \in S} I^*_v(f_v,f'_v) .$$
\end{thm}

\begin{rem}\label{rem:IIproved}
The conjecture has been proved in increasingly general versions in \cite{zhang, xue, BP3}, and finally the proof was completed in \cite{BLZZ19,BCZ20}.  For totally definite unitary groups it was also shown in \cite{grob_lin} up to a certain algebraic number. 
\end{rem}

\begin{rem}\label{Ivstar}  
Both sides of Thm.\ \ref{conjecture II} depend on the choice of factorizable vectors $f, f'$, but the dependence is invariant under scaling.  In particular, the statement is independent of the choice of factorizations \eqref{422}, and the assertions below on the nature of the local factors $I^*_v(f_v,f'_v)$ are meaningful.
\end{rem}

The algebraicity of local terms $I^*_v$ was proved in \cite{harrisANT} when $v$ is non-archimedean.  More precisely, we have the following:

\begin{lem}\label{localIv} 
Let $v$ be a non-archimedean place of $F^+$. Let $\pi$ and $\pi'$ be cohomological tempered cuspidal automorphic representations as above. Let $E$ be a number field over which $\pi_v$ and $\pi'_v$  both have rational models.  Then for any $E$-rational vectors $f_v \in \pi_v$, $f'_v \in \pi'_v$, we have
$$I^*_v(f_v,f'_v)  \in E.$$
\end{lem}
\begin{proof}  The algebraicity of the local zeta integrals $I_v(f_v,f'_v)$ is proved in \cite{harrisANT}, Lem.\ 4.1.9, when the local inner products $\<\cdot,\cdot\>_{\pi_v}$ and  $\<\cdot,\cdot\>_{\pi'_v}$ are taken to be rational over $E$. Since $f_v$ and $f'_v$ are $E$-rational vectors, this implies the assertion for the normalized integrals $I^*_v$ as well. 
\end{proof}

We will state an analogous result for the archimedean local factors as an expectation of ours in the next section.

\subsection{Review of the results of \cite{H14}}\label{reviewH14}

Let us fix a place $v_0\in S_\infty$ and let $H=H_{I_0}$, where $I_0$ is as in \S \ref{construction of motive}. Let $\cF_{\lambda}=\otimes_{v\in S_\infty}\cF_{\lambda_v}$ be some irreducible finite-dimensional representation of $H_\infty=H_{I_0,\infty}$ as in \S \ref{sect:finitereps}. Using that $H_\infty$ is compact at $v\neq v_0$ one sees as in \S \ref{sect:coh} that there are exactly $n$ inequivalent discrete series representations of $ H_{\infty}$, denoted $\pi_{\lambda,q}$, $0\leq q \leq n-1$, for which  $H^p(\h_\infty, K_{H,\infty},\pi_{\lambda,q}\otimes\cF^{\sf v}_{\lambda}) \neq 0$ for some degree $p$ (which necessarily equals $p=n-1$). Moreover, the representations $\pi_{\lambda,q}$, $0\leq q \leq n-1$, are distinguished by the property that,
$$\dim H^q(\q,K_{H,\infty},\pi_{\lambda,q}\otimes \W^{\sf v}_{\Lambda(q)}) = 1$$
for $\Lambda(q)=A(q)-\rho_n$, where $A(q)$ denotes the Harish-Chandra parameter of $\pi_{\lambda,q}$ and all other $H^*(\q,K_{H,\infty},\pi_{\lambda,q}\otimes \W)$ vanish as $\W$ runs over all irreducible representations of $K_{H,\infty}$. We can determine $\HC(q)$ explicitly: Let
$$\HC_{\lambda} = (\HC_{\lambda,v})_{ v \in S_\infty};  ~\HC_{\lambda,v} =(\HC_{v,1} > \dots > \HC_{v,n})$$
be the infinitesimal character of $\cF_{\lambda}$, as in \S 4.2 of \cite{H14}.  Then $\HC(q) = (\HC(q)_{v})_{ v \in S_\infty}$ where $\HC(q)_{v} = \HC_{\lambda,v}$ for $v \neq v_0$ and
$$\HC(q)_{v_0} = (\HC_{v_{0},1} > \dots > \widehat{\HC_{v_{0},q+1}}> \dots  > \HC_{v_{0},n}; \HC_{v_{0},q+1})$$
(the parameter marked by $\widehat{}$ is deleted from the list).  The following is obvious:

\begin{lem}\label{knownknown}  
For $0\leq q \leq n-2$ the parameter $\HC(q)$ satisfies Hyp.\ 4.8 of \cite{H14}. For $q=0$, the representation $\pi_{\lambda,q}$ is holomorphic.
\end{lem}

Now suppose that the highest weight $\lambda_{v_0}$ is regular. Equivalently, the Harish-Chandra parameter $\HC_{\lambda,v_0}$ satisfies the regularity condition $\HC_{v_0,i} - \HC_{v_0,i+1} \geq 2$ for $i = 1, \dots, n-1$. Then, for $0\leq q \leq n-2$ define a Harish-Chandra parameter $\HC'(q) = (\HC'(q)_v)_{ v \in S_\infty}$ by the formula (4.5) of \cite{H14}:
\begin{equation}\label{lambdaprime}  
\HC'(q)_{v_0} = (\HC_{v_{0},1} - \tfrac{1}{2} > \dots > \widehat{\HC_{v_{0},q+1}-\tfrac{1}{2}} > \dots  > \HC_{v_{0},n-1}  - \tfrac{1}{2}; \HC_{v_{0},q+1} +  \tfrac{1}{2}).
\end{equation}
For $v \neq v_0$, $\HC'(q)_v =  (\HC_{v,1} - \tfrac{1}{2}  > \dots  > \HC_{v,n-1}  - \tfrac{1}{2})$.\\\\
Since $\lambda_{v_0}$ is regular, \cite{H14}, Lem.\ 4.7, shows that $\HC'(q)$ is the Harish-Chandra parameter for a unique discrete series representation $\pi_{\HC'(q)}$ of $H'_{\infty}$. Indeed, the regularity of $\lambda_{v_0}$ is the version of Hyp.\ 4.6 of \cite{H14}, where the condition is imposed only at the place $v_0$ where the local unitary group is indefinite. Observe that there is no need for a regularity condition at the definite places: For $v \neq v_0$ the parameter $A'(q)_v$ is automatically the Harish-Chandra parameter of an irreducible representation. We can thus adapt Thm.\ 4.12 of \cite{H14} to the notation of the present paper:

\begin{thm}\label{GPknownknown}  
Suppose $\lambda_{v_0}$ is regular.  For $0\leq q \leq n-2$ let $ \pi(q)= \pi(A(q))$ and $ \pi'(q)= \pi'(A'(q))$ be a tempered cuspidal automorphic representations of $ H(\A_{F^+})$ and $ H'(\A_{F^+})$, respectively, with archimedean components $\pi_{\HC(q)}$ and $\pi^{\sf v}_{\HC'(q)}$. Let $ \xi$ be the Hecke character  $(\xi_{ \pi}\cdot\xi_{ \pi'})^{-1} $ of $U(V_{1})(\A_{F^+})$ and set $\pi''(q)=\pi'(q)\otimes\xi$. Then for any deRham-rational elements $f\in \pi(q)$, $f''\in \pi''(q)$
$$I^{can}(f,f'')\in E(\pi(q))  E(\pi''(q))=E(\pi(q))  E(\pi'(q)).$$ 
\end{thm}
The statement in \cite{H14} has two hypotheses:  the first one is the regularity of the highest weight, while the second one (Hyp.\ $4.8$ of \cite{H14}) follows as in Lem.\ \ref{knownknown} from the assumption that $q \neq n-1$. We remark that the assumption in  \textit{loc.cit} on the Gan-Gross-Prasad multiplicity one conjecture for real unitary groups has been proved by He in \cite{he}.\\\\
A cuspidal automorphic representation $ \pi$ of $ H(\A_{F^+})$ that satisfies the hypotheses of Thm.\ \ref{GPknownknown} contributes to interior cohomology of the corresponding Shimura variety $Sh(H, Y_V)$ with coefficients in the local system defined by the representation $\W^{\sf v}_{\Lambda(q)}$. This cohomology carries a (pure) Hodge structure of weight $n-1$, with Hodge types corresponding to the infinitesimal character of $\W^{\sf v}_{\Lambda(q)}$, which is given by 
$$A(q)^{{\sf v}}= (-A(q)_{v,n}>...> -A(q)_{v,1})_{v\in S_\infty}.$$
The Hodge numbers corresponding to the place $v_0$ are
\begin{equation}\label{hodge1} 
(p_i = -\HC_{v_0,n+1 -i}+ \tfrac{n-1}{2}, q_i = n-1-p_i);  \quad\quad (p_i^c = q_i, q_i^c = p_i). 
\end{equation}
Analogously, a cuspidal automorphic representation $\pi'$ of $ H'(\A_{F^+})$ as in Thm.\ \ref{GPknownknown} contributes to interior cohomology of the corresponding Shimura variety $Sh(H',Y_{V'})$  with coefficients in the local system defined by a representation $\W_{\Lambda'(q)}$, whose parameters are obtained from those of $\HC'(q)$ by placing them in decreasing order and substracting $\rho_{n-1}$.  In particular, it follows from \eqref{lambdaprime} that the infinitesimal character of $\W_{\Lambda'(q)}$ at $v_0$ is given by
$$(\HC_{v_0,1} - \tfrac{1}{2} > \dots > \HC_{v_0,q} - \tfrac{1}{2} > \HC_{v_0,q+1} + \tfrac{1}{2} > \HC_{v_0,q+2} - \tfrac{1}{2} > \dots > \HC_{v_0,n-1}  - \tfrac{1}{2}),$$
with strict inequalities due to the regularity of $A_{\lambda,v_0}$ and with corresponding Hodge numbers
\begin{equation}\label{hodge2} p_i' = \HC_{v_0,i} + \tfrac{n-3}{2}, \quad {\rm for}\quad i \neq q+1; \quad\quad p'_{q+1} = \HC_{v_0,q+1} + \tfrac{n-1}{2}
 \end{equation}
and $q'_i = n-2 - p'_i$, etc. Here is a consequence of the main result of \cite{H14}.

\begin{thm}\label{BS}  
Let ${\pi}(q)$ be as above. Then there exists a tempered cuspidal automorphic representation $\pi'(q)$ of $H'(\A_{F^+})$ with archimedean component $\pi_{\HC'(q)}^{{\sf v}}$, such that 
\begin{enumerate}
\item\label{BS1} $BC(\pi'(q))$ is cuspidal automorphic and supercuspidal at a non-archimedean place of $\tr$ which is split in $F$, and
\item\label{BS2} there are factorizable cuspidal automorphic forms $f \in \pi(q)$, $f' \in \pi'(q)$, so that $I^{can}(f,f') \neq 0$ with $f_v$ (resp.\ $f'_v$) in the minimal $K_{H,v}$- (resp.\ $K_{H',v}$-type) of $\pi(q)_v$  (resp.\ $\pi'(q)_v$) for all $v \in S_{\infty}$. 
\end{enumerate}
In particular, the GGP-period $\CP(f,f')$ does not vanish.
\end{thm}

\begin{proof}  
Although this is effectively the main result of \cite{H14}, it is unfortunately nowhere stated in that paper. So, let us explain why this is a consequence of the results proved there: First, we claim that the discrete series representation $\pi_{\HC'(q)}$ is isolated in the (classical) automorphic spectrum of $H'_\infty$, in the sense of \cite{BS}, see Cor.\ 1.3 of \cite{H14}. Admitting the claim, we note that Hyp.\ 4.6 of \cite{H14} is our regularity hypothesis, and Hyp.\ 4.8 is true by construction. The theorem then follows from the discussion following the proof of Thm.\ 4.12 of \cite{H14}. More precisely,  because $\pi_{\HC'(q)}$ is isolated in the automorphic spectrum, we can apply Cor.\ 1.3 (b) of \cite{H14}, which is a restatement of the main result of \cite{BS}. Finally, the isolation follows as in \cite{HLi}, Thm.\ 7.2.1, using the existence of base change from $H'(\A_{F^+})$ to $G_{n-1}(\A_F)$, as it was established for tempered cuspidal automorphic representations in \cite{KMSW}, Thm.\ 5.0.5.
\end{proof}

Recall the number field $E_Y(\eta)$ from \cite{H13}, p.\ 2023. It has been shown in Cor.\ 3.8 of \cite{H13} (and its correction in the Erratum to that paper) that the underlying Harish-Chandra modules of the discrete series representations $\pi(q)_v$ and $\pi(q)'_v$, $v\in S_\infty$, are defined over this number field $E_Y(\eta)$. {\it From now on, we will assume that the number field $E(\pi)$, defined for a cohomological tempered cuspidal representation of a unitary group over $F^+$ in \S \ref{sect:E(pi)}, contains $E_Y(\eta)$.} One sees that the cuspidal automorphic forms $f \in \pi(q)$, $f' \in \pi'(q)$ from Thm.\ \ref{BS}.(2) can be chosen so that, for all $v\in S_\infty$, $f_v$ (resp. $f'_v$) belongs to the $E(\pi(q))$- (resp. $E(\pi'(q))$-) rational subspace of the minimal $K_{H,v}$-type of $\pi(q)_v$ (resp. $K_{H',v}$-type of $\pi'(q)_v$), with respect to the $E(\pi(q))$- (resp. $E(\pi'(q))$-) deRham-rational structure defined in \S \ref{sect:E(pi)}. The following statement is then a conjectural, archimedean analog of Lem.\ \ref{localIv}:

\begin{conj}\label{lvarch}  
Let $f$ and $f'$ be as in Thm.\ \ref{BS} and assume that they are chosen so that, for all $v\in S_\infty$, $f_v$ (resp. $f'_v$) belongs to the $E(\pi(q))$- (resp. $E(\pi'(q))$-) rational subspace of the minimal $K_{H,v}$-type of $\pi(q)_v$ (resp. $K_{H',v}$-type of $\pi'(q)_v$). Then,
$$I^*_v(f_v,f'_v) \in (E(\pi(q))\cdot E(\pi'(q)))$$
for all $v\in S_\infty$.

\end{conj}

\subsection{Automorphic $Q$-periods}\label{Qperiod}
In order to prove the factorization of the local arithmetic automorphic periods, see \eqref{eq:factor}, we will need one last ingredient, namely {\it automorphic $Q$-periods}. To define them, let $\pi$ be a cohomological, tempered, cuspidal automorphic representation of $H(\A_{F^+})$ and let $\varphi_\pi\in \pi$ be deRham-rational, cf.\ \S \ref{sect:E(pi)}. For each $\sigma\in {\rm Aut}(\C/E(\Lambda))$, we choose a ${}^{\sigma}\! \varphi_\pi$, which generates the deRham-rational structure of the unique twist ${}^\sigma\!\pi$, cf.\ Lem.\ \ref{lem:unique}. We define
$$Q({}^{\sigma}\!\varphi_{\pi}):=\<{}^{\sigma}\!\varphi_{\pi},{}^{\sigma}\!\varphi_{\pi}\>.$$ 
By Lem.\ 3.19 and Lem.\ 3.20 of \cite{H13}, $Q({}^{\sigma}\!\varphi_{\pi})$ is well-defined up to multiplication by elements of the form $\sigma(t)$ with $t\in E(\pi)^{\times}$ and (within the respective quotient of algebras) independent of the choice of ${}^{\sigma}\!\varphi_{\pi}$. Therefore, the family of numbers $Q({}^{\sigma}\!\varphi_{\pi})$ gives rise to an element
$$Q(\pi)\in E(\pi)\otimes_\Q\C\cong \C^{|J_{E(\pi)}|},$$
called the {\it automorphic $Q$-period} attached to $\pi$.   

Since the Petersson inner product appears in the definition of the GGP-period it is convenient to use it to define the automorphic $Q$-periods.  However, it has already been mentioned that the complex conjugation used to define the Petersson inner product does not quite correspond to the operator $F_\infty$ used to define motivic $Q$-periods.  Thus $Q(\pi)$ differs from the $Q$-period attached to the motive $M(\Pi)$
by a factor corresponding to the central character of $\Pi$; this explains the normalization in \ref{local period definition}.

\section{Proof of the automorphic variant of Deligne's conjecture}\label{central value}

\subsection{Existence of certain endoscopic representations}\label{sect:endo}
In view of Thm.\ \ref{automorphic Deligne near central} we first concentrate on the case when $n$ and $n'$ have different parity. So, let $n$ be an odd integer and $n'$ be an even integer, and let $\Pi$ and $\Pi'$ denote cohomological conjugate self-dual cuspidal automorphic representations of $G_n(\A_F)$ and $G_{n'}(\A_F)$, respectively. Then there are non-trivial cohomological $L$-packets $\prod(U^*_n,\Pi)$ and $\prod(U^*_{n'},\Pi')$, cf.\ Rem.\ \ref{rmk:des}\\\\
We fix an odd integer $N \geq \max\{n,n'\}$. Let $\chi_i$, $1\leq i\leq N-n$, and $\chi'_j$, $1\leq j\leq N-1-n'$, denote conjugate self-dual algebraic Hecke characters of $\GL_1(\A_F)$. They are base changes from algebraic Hecke characters of $U^*_1(\A_{F^+})$. The isobaric sums
$$\tau:=\Pi \boxplus \chi_1\boxplus \cdots \boxplus \chi_{N-n}$$
on $G_N(\A_F)$, respectively,
$$\tau':=\Pi' \boxplus (\chi'_1\eta^{-1})\boxplus \cdots \boxplus (\chi'_{N-1-n'}\eta^{-1})$$ 
on $G_{N-1}(\A_F)$, are always algebraic and in fact by the unitarity of the inducing datum always fully induced. If at every $v\in S_\infty$, the exponents $a_{v,i}$ of $\Pi$, $1\leq i\leq n$, (resp. $b_{v,j}$ of $\Pi'$, $1\leq j\leq n'$) (see Sect.\ \ref{sect:coh}) and the corresponding exponents of each character $\chi_{i,v}$,  $1\leq i\leq N-n$, (resp. $\chi'_{j,v}\eta_v^{-1}$, $1\leq j\leq N-1-n'$) are different, then the isobaric sum is cohomological: In order to see this, one puts them in decreasing order and solves \eqref{hw and it} to obtain a highest weight $\mu_v$ (resp.\ $\mu'_v$).

\begin{prop}\label{mult1}  
Let $\Pi$, $\Pi'$, $\chi_i$ and $\chi'_j$ be as above and assume that $\tau$ and $\tau'$ are cohomological. Fix some finite set of places $S$ of $F$ containing $S_\infty$. We suppose the central critical value 
\begin{equation}\label{nonvanhyp} 
L^S(\tfrac{1}{2}, \tau \times \tau') \neq 0.
\end{equation}

Then, there exists a pair of non-degenerate $c$-hermitian spaces $V \supset V'$ over $F$, with $\dim V = N$, $\dim V' = N-1$, and a cohomological tempered cuspidal automorphic representation
$\sigma \otimes \sigma'$ of $U(V)(\A_{F^+})\times U(V')(\A_{F^+})$, such that
\begin{itemize}
\item[(1)]  $BC(\sigma) \otimes BC(\sigma')=\tau \otimes \tau'$ and
\item[(2)]  $\dim {\Hom}_{U(V')(\A_{F^+})}(\sigma \otimes \sigma', \CC) =1$, where $U(V')(\A_{F^+})$ is diagonally embedded into $U(V)(\A_{F^+})\times U(V')(\A_{F^+})$,
\end{itemize}
Integration over the diagonal gives a basis of ${\rm Hom}_{U(V')(\A_{F^+})}(\sigma \otimes \sigma', \CC)$. The pairs $(V,V')$ and $(\sigma,\sigma')$ are unique with respect to these properties.
\end{prop}

\begin{proof}   
We proceed in several steps:

{\bf A.} By construction $\tau$ defines a generic global Arthur-parameter $\phi$, as in \cite{KMSW}, \S 1.3.4. As the local components $\tau_v$ are tempered at all places $v$ of $F$, their attached local Langlands-parameters are all bounded, whence so are the local Arthur-parameters $\phi_v$, constructed in \cite{KMSW}, Prop.\ 1.3.3, at all $v$ of $F^+$. Following (the unconditional) \cite{KMSW}, Thm.\ 1.6.1.(5) and (6), to each local component $\tau_v$ of $\tau$ we may associate a local Vogan $L$-packet $[\tau_v]$, which consists by definition of equivalence classes of pairs $(V_v,\sigma_v)$ where $V_v$ is an $N$-dimensional hermitian space over $F_v$ and $\sigma_v$ is a tempered irreducible admissible representation of $U(V_v)$, and a component group
$$A(\tau_v) \isoarrow (\Z/2\Z)^{r_v},$$
where $r_v \geq N-n+1$ is the number of irreducible pieces in the local Langlands-parameter of the descent of $\tau_v$ to $U_N^*(F_v)$. If $v$ is non-archimedean the members of $[\tau_v]$ are in one-to-one correspondence with characters of $A(\tau_v)$; if $v$ is archimedean, the signature of $V_v$ is an additional invariant and the members of $[\tau_v]$ are cohomological, cf.\ \cite{clozelihes}, Lem.\ 3.8 and Lem.\ 3.9. We denote the character group $\hat{A}(\tau_v)$ and let  $\chi_{(V_v,\sigma_v)} \in \hat{A}(\tau_v)$ denote the character corresponding to the indicated pair. 

Analogously, attached to each local component $\tau'_v$ of $\tau'$ there is a local Vogan $L$-packet $[\tau'_v]$, consisting of equivalence classes of pairs $(V'_v,\sigma'_v)$, a component group $A(\tau'_v)$, and characters
$\chi_{(V'_v,\sigma'_v)} \in \hat{A}(\tau'_v)$.

For almost all $v$, $[\tau_v]$ contains a pair $(U^*_N(F_v),\sigma^*_v)$ where $U^*_N$ -- the quasi-split inner form, as before -- is unramified, and $\sigma^*_v$ is a spherical representation.   We call this the {\it unramified member} of $[\tau_v]$.  For such $v$, the character group $\hat{A}(\tau_v)$ is normalized so that the unramified pair $(U^*_N(F_v),\sigma^*_v)$ corresponds to the trivial character.

{\bf B.}   Any $a \in A(\tau_v)$ can be viewed as an involution in $\GL_N$, and thus has eigenvalues $+1$ and $-1$.  We let $N[a]$ denote the dimension of its fixed subspace.  As in \cite[\S 6]{GGP1}, $a$ determines an admissible irreducible tempered representation $\tau_v[a]$ of $\GL_{N[a]}(F_v)$ (see 
also the discussion around \eqref{signchar} below).  
Similarly, $a' \in A(\tau'_v)$ is an involution in $\GL_{N-1}$ and determines an admissible irreducible tempered representation $\tau'_v[a']$ of $\GL_{N[a']}(F_v)$.

{\bf C.}  Now the assumption \eqref{nonvanhyp} implies in particular that none of the factors in $L(s,\tau \times \tau')$ vanishes at $s = \tfrac{1}{2}$.  It follows in particular that the global sign of the functional equation is $+1$ for any product of factors of $L(s,\tau \times \tau')$.  In particular, for any $a \in A(\tau_v)$, $a' \in A(\tau'_v)$, we have
\begin{equation}\label{pointC}\begin{split}  \varepsilon(\tfrac{1}{2},\tau[a]\otimes \tau') =  \prod_v\varepsilon(\tfrac{1}{2},\tau_v[a]\otimes \tau'_v) = + 1; \\
 \varepsilon(\tfrac{1}{2},\tau\otimes \tau'[a']) =  \prod_v\varepsilon(\tfrac{1}{2},\tau_v\otimes \tau'_v[a']) = + 1.
\end{split}
\end{equation}

{\bf D.}  Arthur's multiplicity formula asserts the following: for each place $v$ of $F^+$ choose a pair $(\tilde{V}_v,\tilde{\sigma}_v) \in [\tau_v]$ and assume it is the unramified member for almost all $v$. Then there is a global $c$-hermitian space $V$ of dimension $N$ over $F$ and a square-integrable automorphic representation $\sigma$ of $U(V)(\A_{F^+})$, with $V_v \ira \tilde{V}_v$ and $\sigma_v \ira \tilde{\sigma}_v$ for all $v$ (hence, $\sigma$ is tempered everywhere; and so cuspidal), if and only if
$$\prod_v \chi_{(\tilde{V}_v,\tilde{\sigma}_v)} = 1.$$
Moreover, if the sign condition is satisfied, then $\sigma$ has multiplicity one in the cuspidal spectrum of $U(V)$. This formula has been proved by Mok for the quasi-split unitary group $U^*_N$ and, recalling from Rem.\ \ref{rmk:temp} that the generic global Arthur parameter $\phi=\tau$ is also elliptic, it follows from (the unconditional part of) Thm.\ 5.0.5 of \cite{KMSW} in the general case. Obviously, mutatis mutandis, there is a similar formula for $\tau'$.

{\bf E.}  The Gan-Gross-Prasad conjecture asserts that for each $v$, there exists a unique quadruple $(V_v,\sigma_v;V'_v,\sigma'_v) \in [\tau_v]\times [\tau'_v]$ such that $V'_v$ embeds in $V_v$ as a non-degenerate $c$-hermitian subspace, and such that
$$\Hom_{U(V'_v)}(\sigma_v\otimes \sigma'_v,\CC) \neq 0.$$
Moreover, $\dim \Hom_{U(V'_v)}(\sigma_v\otimes \sigma'_v,\CC) = 1$ for this quadruple.  Finally, the characters $\chi_{V_v,\sigma_v}$ and $\chi_{V'_v,\sigma'_v}$ are determined by the explicit formulas, cf.\ Thm.\ 6.2 and \S 17 of \cite{GGP1}:
\begin{equation}\label{signchar} \begin{split}
 \chi_{V_v,\sigma_v}(a) = \varepsilon(\tfrac{1}{2},\tau_v[a]\otimes \tau'_v)\cdot \xi_{\tau_v[a]}(-1)^{N-1}\cdot \xi_{\tau_v'}(-1)^{N[a]}; \\
  \chi_{V'_v,\sigma'_v}(a') = \varepsilon(\tfrac{1}{2},\tau_v\otimes \tau_v'[a'])\cdot \xi_{\tau_v}(-1)^{N[a']}\cdot \xi_{\tau_v'[a']}(-1)^{N}.
 \end{split}
\end{equation}
Here $\xi_{\tau_v[a]}$, $\xi_{\tau_v'}$, etc. denote the respective central characters.

The Gan-Gross-Prasad conjecture has been proved for unitary groups over local fields by Rapha\"el Beuzart-Plessis \cite{BP1}, including the conjecture for tempered representations of real groups. We remark that for discrete series representations, in any case, an explicit formula for the signs we need can be extracted, with some difficulty, from \S 2 of \cite{GGP2}, and this suffices for our application to the case where the unitary groups are totally definite.

{\bf F.} It follows from points D and E that the global datum $(V,\sigma)$ exists, provided
\begin{equation}\label{producta} \prod_v \varepsilon(\tfrac{1}{2},\tau_v[a]\otimes \tau'_v) = +1.\end{equation}
Here we could ignore the central characters because the global central characters are trivial on principal ad\`eles of $G_N$. However, the product in \eqref{producta} is just the global sign, which equals $+1$ by \eqref{pointC}.

{\bf G.} So, we have verified everything except that ${\rm Hom}_{U(V')(\A_{F^+})}(\sigma \otimes \sigma', \CC)$ is generated by integration over the diagonal.
But this is precisely the content of the Ichino--Ikeda--Neal-Harris conjecture, proved in \cite{BCZ20}, see also Thm.\ \ref{conjecture II}.

\end{proof}

\subsection{The ``piano position''}
Let $\tau \otimes \tau'$ be an isobaric automorphic representation of $G_N(\A_F) \times G_{N-1}(\A_F)$, as in Prop.\ \ref{mult1} and let us denote by $\mathcal{F}_{\lambda}$ (resp.\ by $\mathcal{F}_{\lambda'}$) the coefficient module, with respect to which $\sigma_\infty$ (resp.\ $\sigma'_\infty$) is cohomological. 

\begin{defn}  
We say $\tau \otimes \tau'$ is {\it in piano position}\footnote{The terminology comes from the image of the relative position of the weights of the descents $\sigma$ and $\sigma$  in the branching formula \eqref{branching law}, which remind the authors of a keyboard in which the parameters of the highest weight of $\sigma$ are the white keys and those of the highest weight of $\sigma'$ are the black keys.  This only works on a piano in which every successive pair of white keys is separated by a black key.  The authors do not know whether such a piano can be purchased, so we recommend the reader, who has a distinguished sense for conventionalism, to extrapolate from the series of keys from the notes F to B (which corresponds to the case $\GL_4 \times \GL_3$) using her/his imagination.}, if 
$$\Hom_{U(V')_{\infty}} (\mathcal{F}_{\lambda}\otimes \mathcal{F}_{\lambda'},\CC) \neq 0,$$
or, equivalent,
\begin{equation}\label{branching law}
\lambda_{v,1}\geq -\lambda'_{v,N-1}\geq \lambda_{v,2}\geq -\lambda'_{v,N-2}\cdots \geq -\lambda'_{v,1}\geq  \lambda_{v,N} 
\end{equation}
for each $v\in S_\infty$, cf.\ \cite{goodman-wall}, Thm.\ 8.1.1. 
\end{defn}

\begin{lem}\label{lem:piano}
Let $\tau \otimes \tau'$ be an isobaric automorphic representation of $G_N(\A_F) \times G_{N-1}(\A_F)$, as in Prop.\ \ref{mult1}. Then $\tau \otimes \tau'$ is in piano position if and only if the groups $U(V)$ and $U(V')$ in Prop.\ \ref{mult1} are both totally definite.
\end{lem}
\begin{proof}
It is easy to see that if the groups $U(V)$ and $U(V')$ in Prop.\ \ref{mult1} are both totally definite, $\Hom_{U(V')_{\infty}} (\sigma_\infty\otimes \sigma'_\infty,\CC) \cong \Hom_{U(V')_{\infty}} (\mathcal{F}_{\lambda}\otimes \mathcal{F}_{\lambda'},\CC)$, whence $\tau \otimes \tau'$ is in piano position by Prop.\ \ref{mult1}.(2). For the opposite direction, observe that $BC(\mathcal{F}^{\sf v}_{\lambda})\otimes BC(\mathcal{F}^{\sf v}_{\lambda'})\cong \tau_\infty\otimes\tau'_\infty$, where we view $\mathcal{F}^{\sf v}_{\lambda}$ (resp.\ $\mathcal{F}^{\sf v}_{\lambda'}$) as an irreducible tempered representation of the compact Lie group $U(N)^d$ (resp. $U(N-1)^d$). However, by construction also $BC(\sigma_\infty)\otimes BC(\sigma'_\infty)\cong \tau_\infty\otimes\tau'_\infty$ and $\Hom_{U(V')_\infty}(\sigma_\infty\otimes\sigma'_\infty,\C)\neq 0$. So, if $\tau \otimes \tau'$ is in piano position, i.e., if 
$$0\neq\Hom_{U(V')_{\infty}} (\mathcal{F}_{\lambda}\otimes \mathcal{F}_{\lambda'},\CC)\cong \Hom_{U(N-1)^d} (\mathcal{F}^{\sf v}_{\lambda}\otimes \mathcal{F}^{\sf v}_{\lambda'},\CC),$$ 
then by the archimedean uniqeness-results of \cite{BP2}, $U(V)_\infty\times U(V')_\infty\cong U(N)^d\times U(N-1)^d$ and $\sigma_\infty\otimes\sigma'_\infty\cong\mathcal{F}^{\sf v}_{\lambda}\otimes  \mathcal{F}^{\sf v}_{\lambda'}$.
\end{proof}

We may translate equation \eqref{branching law} in terms of the infinity type by equation \eqref{hw and it}. More precisely, we have

\begin{lem}[Branching condition]\label{branching condition}
Suppose $\tau$ has infinity type $\{z^{A_{v,i}} \bar{z}^{-A_{v,i}}\}_{1\leq i\leq N}$ and $\tau'$ has infinity type $\{z^{B_{v,j}} \bar{z}^{-B_{v,j}}\}_{1\leq i\leq N}$ at $v\in S_{\infty}$ with $A_{v,1}> A_{v,2} \cdots > A_{v,N}$ and $B_{v,1}> B_{v,2} \cdots > B_{v,N-1}$ respectively.  Then $\tau\otimes \tau'$ is in piano position if and only if $$A_{v,1}>-B_{v,N-1}>A_{v,2}>-B_{v,N-2}\cdots >-B_{v,1}>A_{v,N}.$$
\end{lem}

The following result is a direct generalisation of $(4.1.6.1)$ and $(4.1.6.2)$ of \cite{harrisANT} to the case of arbitrary totally real base fields $F^+$. We recall that when the unitary groups are totally definite, the period $\CP(f,f')$ is algebraic for certain choice of $f$ and $f'$ (cf.\  Cor.\ $2.5.5$ of \cite{harrisANT}).

\begin{prop}\label{good position central value}
Let $\tau \otimes \tau'$ be an isobaric automorphic representation of $G_N(\A_F) \times G_{N-1}(\A_F)$, as in Prop.\ \ref{mult1} and assume that $\tau\otimes \tau'$ is in piano position (so, in particular, $U(V)$ and $U(V')$ from Prop.\ \ref{mult1} are totally definite, see Lem.\ \ref{lem:piano} above). Then,
$$L^S(\tfrac{1}{2},\tau\times \tau') \sim_{E(\tau)E(\tau')} (2\pi i)^{-\tfrac{1}{2}dN(N+1)}L^S(1,\tau,{\rm As}^{(-1)^{N}})L^S(1,\tau',{\rm As}^{(-1)^{N-1}}).$$
Interpreted as families, this relation is equivariant under the action of ${\rm Aut}(\C/F^{Gal})$.
\end{prop}

\begin{rem}  In \cite{harrisANT} the corresponding result is proved when $F$ is imaginary quadratic, assuming the Ichino-Ikeda-N. Harris conjecture, which is now  Thm.\ \ref{conjecture II}.   When $\tau$ is cuspidal along with some mild technical conditions, Prop.\ \ref{good position central value} was proved in \cite{grob_lin} without using Thm.\ \ref{conjecture II}.
\end{rem}

\begin{prop}\label{existence of chi}  
Let $\Pi$ and $\Pi'$ cohomological conjugate self-dual cuspidal automorphic representations of $G_n(\A_F)$ and $G_{n'}(\A_F)$, respectively, and suppose that $L^S(\tfrac{1}{2}, \Pi \times \Pi') \neq 0$.  We assume that Conj.\ \ref{nonvan} holds.  Then, there exist an odd integer $N$ and characters $\chi_k$, $\chi'_l$, $1\leq k\leq N-n$, $1\leq l\leq N-1-n'$ so that $\tau$ and $\tau'$ satisfy the conditions of Prop.\ \ref{mult1}. Moreover, the product $\tau \otimes \tau'$ is in piano position. 
\end{prop}
\begin{proof}  
Let $\{z^{a_{v,i}} \bar{z}^{-a_{v,i}}\}_{1\leq i\leq n}$ (resp. $\{z^{b_{v,j}} \bar{z}^{-b_{v,j}}\}_{1\leq j\leq m}$) be the infinity type of $\Pi$ (resp. $\Pi'$) at $v\in S_{\infty}$. In particular, $a_{v,i}\in \Z$ and $b_{v,j}\in \Z+\tfrac{1}{2}$ for each $v$, $i$ and $j$. Let $C,C'\in \Z$ such that $C\leq t\leq C'$ when $t$ runs over $a_{v,i}$ and $b_{v,j}$. We may assume that $C'-C$ is even. We let $N:=C'-C+1$.

For each $v$, the set $\{C,C+1,\cdots, C' \} \backslash \{a_{v,i}\}_{1\leq i\leq n}$ has exactly $N-n$ integers and is denoted by $\{x_{k,v}\}_{1\leq k\leq N-n}$. Similarly, the set $\{C+\tfrac{1}{2},C+\frac{3}{2},\cdots, C'-\tfrac{1}{2}\}\backslash \{b_{v,j}\}_{1\leq j\leq n'}$ has exactly $N-n'-1$ half integers and is denoted by $\{y_{l,v}\}_{1\leq l\leq N-n'-1}$. 

For $1\leq k\leq N-n$ (resp. $1\leq l\leq N-1-n'$), let $\chi_{k}$ (resp. $\chi'_{l}$) be a Hecke character of $\GL_1(\Acm)$ with infinity type $z^{x_{k,v}}z^{-x_{k,v}}$ (resp. $z^{y_{l,v}-\tfrac{1}{2}}z^{-y_{l,v}+\tfrac{1}{2}}$) at $v$ (such Hecke characters exist by \cite{weil}). Let $\tau$ and $\tau'$ be as defined in \S \ref{sect:endo}. Then $\tau$ (resp. $\tau'$) has infinity type $\{z^{t}\overline{z}^{-t}\}_{C\leq t\leq C'}$ (resp. $\{z^{s+1/2}\overline{z}^{-s-1/2}\}_{C\leq s \leq C'-1}$) at each $v$. In particular, $\tau \otimes \tau'$ is cohomological. Our Conj.\ \ref{nonvan} now allows us to choose the $\chi_k$ and $\chi'_l$ so that the various central values do not vanish. Note that we also require that $L^S(\tfrac{1}{2},\chi_k\cdot \chi'_l) \neq 0$ for all $k, l$, but this is easy to arrange by varying the $\alpha_v$ in the statement of Conj.\ \ref{nonvan}. Hence, $L^S(\tfrac{1}{2}, \tau \times \tau') \neq 0$ and so $\tau$ and $\tau'$ satisfy the conditions of Prop.\ \ref{mult1}. Finally, $\tau \otimes \tau'$ is in piano position by Lem.\ \ref{branching condition}.
\end{proof}

\subsection{A proof of the automorphic variant of Deligne's conjecture at the central critical value: The case $n\nequiv n' \mod 2$}\label{sect:diffparity}
Complementary to Thm.\ \ref{automorphic Deligne near central} in this section we will finally prove Conj.\ \ref{main conjecture} for a large family for automorphic representations $\Pi$ and $\Pi'$ in the case when $n$ and $n'$ are of the same parity and when $s_0$ denotes the central critcial value, $s_0=\tfrac{1}{2}$. To this end we need to recall two more results from the literature:

\begin{thm}[\cite{H97}, Thm.\ 3.5.13; \cite{guer-lin}, Thm.\ 4.3.3, \cite{H21}, Thm.\ 7.1] \label{n*1}
Let $\Pi$ be a cohomological conjugate self-dual cuspidal automorphic representation of $G_n(\A_F)$, which satisfies Hyp.\ \ref{descent}. Let $\{ z^{a_{v,i}}\bar{z}^{-a_{v,i}}\}_{1\leq i\leq n}$ be its infinity type at $v\in S_\infty$. Let $\chi$ be an algebraic Hecke character of $\GL_1(\A_F)$ with infinity type $z^{a_{v}}\bar{z}^{b_{v}}$ at $v\in S_{\infty}$. If $2a_{v,i}+a_{v}-b_{v}\neq 0$ for all $i$ and $v$, we define $I_{\imath_{v}}=\#\{i\mid 2a_{v,i}+a_{v}-b_{v}<0\}$ and $I=I(\Pi,\chi):=(I_{\imath})_{\imath\in \Sigma}$. If $m$ is critical for $\Pi\otimes \chi$, then we have:
$$ L^{S}(m,\Pi\otimes \chi) \sim_{E(\Pi)E_F(\chi)} (2\pi i)^{mdn}P^{(I)}(\Pi)\prod\limits_{\imath \in\Sigma}p(\widecheck{\chi},\imath)^{I_{\imath}}p(\widecheck{\chi},\overline{\imath})^{n-I_{\imath}}.$$
Interpreted as families, this relation is equivariant under the action of ${\rm Aut}(\C/F^{Gal})$. 
\end{thm}

\begin{thm}\label{Asai thm}
Let $\Pi$ be a cohomological conjugate self-dual cuspidal automorphic representation of $G_n(\A_F)$, which satisfies Hyp.\ \ref{descent}. We assume that Conj.\ \ref{nonvan} holds and that $\Pi$ is $(n-1)$-regular. Then, one has
\begin{equation}\label{Asai L-value 1}
L^{S}(1,\Pi,{\rm As}^{(-1)^{n}})\sim_{E(\Pi)}(2\pi i)^{dn(n+1)/2}\prod\limits_{\imath \in \Sigma}\prod\limits_{0\leq i\leq n}P^{(i)}(\Pi,\imath).
\end{equation}
Interpreted as families, this relation is equivariant under the action of ${\rm Aut}(\C/F^{Gal})$.
\end{thm}

\begin{proof}
By \cite{grob_lin}, Thm.\ 1.42 \& Thm.\ 4.17, we know that there is a certain Whittaker-period $p(\Pi)$ attached to $\Pi$ (cf.\ \cite{grob_lin}, Cor.\ 1.22 \& \S1.5.3), such that  
\begin{equation}\label{eq:q1}
L^{S}(1,\Pi,{\rm As}^{(-1)^{n}})\ \sim_{E(\Pi)} (2\pi i)^{dn} \ p(\Pi).
\end{equation}
If $\Pi$ is not at least $2$-regular, then here we use that Conj.\ \ref{nonvan} implies Conj.\ 4.31 from \cite{grob_lin}. Combining Thm.\ \ref{thm:artihap} and \cite{jie-thesis}, Cor.\ $7.5.1$, there exists an archimedean factor $Z(\Pi_{\infty})$ such that 
\begin{equation}\label{eq:q2}
p(\Pi)\sim_{E(\Pi)} Z(\Pi_{\infty})\prod\limits_{\imath\in \Sigma}\prod\limits_{1\leq i\leq n-1}P^{(i)}(\Pi,\imath)\sim_{E(\Pi)} Z(\Pi_{\infty})\prod\limits_{\imath\in \Sigma}\prod\limits_{0\leq i\leq n}P^{(i)}(\Pi,\imath)
\end{equation}
where the last equation follows from equation \eqref{local end 2}. Now, let $\Pi^{\#}$ be any cohomological conjugate self-dual cuspidal automorphic representation of $G_{n-1}(\Acm)$, which satisfies Hyp.\ \ref{descent} and assume that the coefficient modules in cohomology attached to the pair $(\Pi,\Pi^{\#})$ satisfy the analogue of the branching-law \eqref{branching law}. If $\Pi$ is at least $2$-regular, then we may choose $\Pi^{\#}$ such that the inequalities are all strict and so $\Pi^\#$ is then also at least $2$-regular. Then, it follows from \S\ref{sect:critRS}, that there is a critical point $s_0=\tfrac{1}{2}+m$ of $L(s,\Pi\times\Pi^{\#})$ with $m\neq 0$. By Prop.\ $9.4.1$ of \cite{jie-thesis}, for any such critical point, we have
$$p(m,\Pi_{\infty},\Pi_{\infty}^{\#})Z(\Pi_\infty)Z(\Pi^{\#}_\infty)\sim_{E(\Pi)E(\Pi)^{\#}} (2\pi i)^{d(m+\tfrac{1}{2})n(n-1)}$$
where $p(m,\Pi_{\infty},\Pi_{\infty}^{\#}))$ is the bottom-degree archimedean Whittaker period attached to $s_0=\tfrac{1}{2}+m$, defined in Cor.\ 1.17 of \cite{grob_lin} and which is calculated in Cor.\ $4.30$ of the \textit{loc.cit}. More precisely, we have 
$$p(m,\Pi_{\infty},\Pi_{\infty}^{\#})\sim_{E(\Pi)E(\Pi^{\#})} (2\pi i)^{mdn(n-1)-\tfrac{1}{2}d(n-1)(n-2)}.
$$
We then deduce that $$Z(\Pi_\infty)Z(\Pi^{\#}_\infty)\sim_{E(\Pi)E(\Pi^{\#})} (2\pi i)^{\tfrac{1}{2}dn(n-1)+\tfrac{1}{2}d(n-1)(n-2)}.$$
By induction on $n$, we obtain that $Z(\Pi_\infty)\sim_{E(\Pi)} (2\pi i)^{\tfrac{1}{2}dn(n-1)}$. Recalling that every relation above is equivariant under the action of ${\rm Aut}(\C/F^{Gal})$, the result follows combining \eqref{eq:q1} and \eqref{eq:q2}. Finally, if $\Pi$ is not at least $2$-regular, then we invoke Conj.\ \ref{nonvan} under which Prop.\ $9.4.1$ of \cite{jie-thesis} still holds for $p(0,\Pi_{\infty},\Pi_{\infty}^{\#})$ and we conclude as in the case of $s_0\neq \tfrac{1}{2}$.
\end{proof}

\begin{rmk}
The above theorem in fact proves an automorphic variant of Deligne's conjecture for the Asai motive attached to $\Pi$. We refer to Cor.\ 1.3.5 of \cite{harrisANT} and Sect.\ 3 of \cite{harris-lin} for the calculation of Deligne's period in this case. We remark that the condition that $\Pi$ is $(n-1)$-regular can be replaced by a milder regularity condition by considering $\Pi$ of auxiliary type.
\end{rmk}

\begin{thm} \label{automorphic Deligne central}
Let $\Pi$ (resp. $\Pi'$) be a cohomological conjugate self-dual cuspidal automorphic representation of $G_{n}(\Acm)$ (resp. $G_{n'}(\Acm)$) which satisfies Hyp.\ \ref{descent}. We assume that Conj.\ \ref{nonvan} holds. If $n\nequiv n' \mod 2$ and if $\Pi_\infty$ is $(n-1)$-regular and $\Pi'_\infty$ is $(n'-1)$-regular, then
\begin{equation}\label{eq:mainthm1}
L^S(\tfrac{1}{2},\Pi\times \Pi') \sim_{E(\Pi)E(\Pi')} (2\pi i)^{dnn'/2} \prod\limits_{\imath \in \Sigma}\left(\prod\limits_{i=0}^{n}P^{(i)}(\Pi,\imath)^{sp(i,\Pi;\Pi',\imath)}\prod\limits_{j=0}^{n'}P^{(j)}(\Pi',\imath)^{sp(j,\Pi';\Pi,\imath)}\right).
\end{equation}
Interpreted as families, this relation is equivariant under the action of ${\rm Aut}(\C/F^{Gal})$.
\end{thm}
\begin{proof}
The assertion is trivially true, if $L^S(\tfrac{1}{2}, \Pi \times \Pi')= 0$. So, let us suppose that $L^S(\tfrac{1}{2}, \Pi \times \Pi') \neq 0$. Then, by Prop.\ \ref{existence of chi}, there exists an odd integer $N$ and algebraic Hecke characters $\chi_{i}$, $\chi'_{j}$, such that the isobaric sums $\tau$ and $\tau'$, as defined in \S \ref{sect:endo}, satisfy the conditions of Prop.\ \ref{mult1}, and such that $\tau\otimes\tau'$ is in piano position. We may hence apply Prop.\ \ref{good position central value} to this non-vanishing central value:  
\begin{equation}\label{JS1}
L^S(\tfrac{1}{2},\tau\times \tau') \sim_{E(\tau)E(\tau')} (2\pi i)^{-dN(N+1)/2}L^S(1,\tau,{\rm As}^{(-1)^{N}})L^S(1,\tau',{\rm As}^{(-1)^{N-1}}).
\end{equation}
In what follows, we will use $i$ to indicate an integer $0\leq i\leq n$, $j$ to indicate an integer $0\leq j\leq n'$, $k$ to indicate an integer $1\leq k\leq N-n$ and $l$ to indicate an integer $1\leq l\leq N-1-n'$. Recall the algebraic Hecke character $\psi$ from \S\ref{sect:fields}. Moreover, we abbreviate the compositum of all the number fields $E_{F}(\chi_{i})$, $E_{F}(\chi'_{k})$ and $E_{F}(\psi)$ as defined in \S\ref{sect:CM} by $E_{\sf char}$.\\\\
The left hand side of equation \ref{JS1}  then equals
\begin{eqnarray}\label{JS2} 
&&L^S(\tfrac{1}{2},\Pi \times \Pi')\prod\limits_{l=1}^{N-1-n'}L^S(\tfrac{1}{2},\Pi\otimes \chi_{l}^{\prime}\eta^{-1})\prod\limits_{k=1}^{N-n}L^S(\tfrac{1}{2},\Pi'\otimes \chi_{k})\prod\limits_{k=1}^{N-n}\prod\limits_{l=1}^{N-1-n'}L^S(\tfrac{1}{2},\chi_{k} \chi_{l}^{\prime}\eta^{-1})\nonumber\\
&=& L^S(\tfrac{1}{2},\Pi \times \Pi')\prod\limits_{l=1}^{N-1-n'}
L^S(1,\Pi\otimes \chi_{l}^{\prime}\psi^{-1})\prod\limits_{k=1}^{N-n}
L^S(\tfrac{1}{2},\Pi'\otimes \chi_{k})\prod\limits_{k=1}^{N-n}\prod\limits_{l=1}^{N-1-n'}L^S(1,\chi_{k} \chi_{l}^{\prime}\psi^{-1}).\nonumber
\end{eqnarray}
One can verify easily that all values, which appear above, are critical values of the respective $L$-function. By Thm.\ \ref{CM}, Thm.\ \ref{n*1} and Thm.\ \ref{thm:artihap}, we obtain that
 $L^S(\tfrac{1}{2},\tau\times \tau') $ is hence in relation to a product of the following form
\begin{equation}
(2\pi i)^{C}L^S(\tfrac{1}{2},\Pi \times \Pi')\prod\limits_{\imath\in \Sigma}  \Big[
\prod\limits_{i=0}^{n}
P^{(i)}(\Pi,\imath)^{S_{i,\imath}} \prod\limits_{j=0}^{n'}
P^{(j)}(\Pi',\imath)^{T_{j,\imath}} \label{left hand side}
\prod\limits_{k=0}^{N-n}p(\widecheck{\chi_{k}},\imath)^{D_{k,\imath}}\prod\limits_{l=0}^{N-1-n'} p(\widecheck{\chi'_{l}},\imath)^{E_{l,\imath}} p(\widecheck{\psi},\imath)^{F_{\imath}} \Big]
\end{equation}
for certain exponents $C$, $S_{i,\imath}$, $T_{j,\imath}$, $D_{k,\imath}$, $E_{l,\imath}$ and $F_{\imath}$. Here we already factored the CM-periods and replaced those at conjugate embeddings $\overline\imath$ using Lem.\ \ref{Lemma CM}. On the other hand, by Lem.\ $3.3$ of \cite{grob_lin}, 
\begin{eqnarray}
&&L^S(1,\tau,{\rm As}^{(-1)^{N}})\nonumber\\
&\sim_{E(\Pi)E_{\sf char}} & L^S(1,\Pi,{\rm As}^{(-1)^{n}}) \prod_{k=1}^{N-n}L^S(1,\chi_{k},{\rm As}^{-}) \prod_{k=1}^{N-n}L^S(1,\Pi\otimes \chi_{k}^{c})\prod\limits_{1\leq k<k'\leq N-n}L^S(1,\chi_{k}\chi_{k'}^{c})\nonumber\\
&\sim_{E(\Pi)E_{\sf char}} & (2\pi i)^{d(N-n)} L^S(1,\Pi,{\rm As}^{(-1)^{n}})  \prod_{k=1}^{N-n}L^S(1,\Pi\otimes \chi_{k}^{-1})\prod\limits_{1\leq k<k'\leq N-n}L^S(1,\chi_{k}\chi_{k'}^{-1}).\nonumber
\end{eqnarray}
The last equation is due to the fact that $L^S(1,\chi_{k},{\rm As}^-)=L^S(1,\varepsilon_{F/F^{+}})\sim_{\cm^{Gal}}(2\pi i)$ for each $k$, as it follows from the proof of Lem.\ 1.30 of \cite{grob_lin}. Similarly, 
\begin{eqnarray}
&&L^S(1,\tau',{\rm As}^{(-1)^{N-1}})\nonumber\\
&\sim_{E(\Pi')E_{\sf char}} & L^S(1,\Pi',{\rm As}^{(-1)^{n'}}) \prod_{l=1}^{N-1-n'}L^S(1,\chi'_{l},{\rm As}^{-1})
\prod_{l=1}^{N-1-n'}L^S(1,\Pi'\otimes (\chi'_{l}\eta^{-1})^{c}) \nonumber\\
& & \times\prod\limits_{1\leq l<l'\leq N-1-n'} L^S(1,\chi_{l}^{\prime}\chi_{l'}^{\prime,c}) \nonumber\\
&\sim_{E(\Pi')E_{\sf char}} & (2\pi i)^{d(N-n'-1)} L^S(1,\Pi',{\rm As}^{(-1)^{n'}})\prod_{l=1}^{N-1-n'}L^S(1,\Pi'\otimes (\chi'_{l})^{-1}\eta) \prod\limits_{1\leq l<l'\leq N-1-n'} L^S(1,\chi_{l}^{\prime}(\chi'_{l'})^{-1})\nonumber\\
&\sim_{E(\Pi')E_{\sf char}} & (2\pi i)^{d(N-n'-1)} L^S(1,\Pi',{\rm As}^{(-1)^{n'}})\prod_{l=1}^{N-1-n'}L^S(\tfrac{1}{2},\Pi'\otimes (\chi'_{l})^{-1}\psi) \prod\limits_{1\leq l<l'\leq N-1-n'} L^S(1,\chi_{l}^{\prime}(\chi'_{l'})^{-1}).\nonumber
\end{eqnarray}
By Thm.\ \ref{Asai thm}, Thm.\ \ref{CM}, Thm.\ \ref{n*1}, Thm.\ \ref{thm:artihap}, and Lem.\ \ref{Lemma CM}
$$(2\pi i)^{-dN(N+1)/2}L^S(1,\tau,{\rm As}^{(-1)^{N}})L^S(1,\tau',{\rm As}^{(-1)^{N-1}})$$ is hence in relation to a product of the following form\begin{equation}\label{right hand side}
(2\pi i)^{c}\prod\limits_{\imath\in \Sigma}  \Big[
\prod\limits_{i=0}^{n}
P^{(i)}(\Pi,\imath)^{s_{i,\imath}} \prod\limits_{j=0}^{n'}
P^{(j)}(\Pi',\imath)^{t_{j,\imath}}
\prod\limits_{k=0}^{N-n}p(\widecheck{\chi_{k}},\imath)^{d_{k,\imath}}\prod\limits_{l=0}^{N-1-n'} p(\widecheck{\chi'_{l}},\imath)^{e_{l,\imath}} p(\widecheck{\psi},\imath)^{f_{\imath}} \Big]
\end{equation}
for certain exponents $c$, $s_{i,\imath}$, $t_{j,\imath}$, $d_{k,\imath}$, $e_{l,\imath}$ and $f_{\imath}$.\\\\ 
It therefore remains to show that
\begin{eqnarray}\label{compare 1}
c-C&=&\frac{dnn'}{2} \\\label{compare 2}
s_{i,\imath}-S_{i,\imath}&=& sp(i,\Pi;\Pi',\imath) \text{ for all } i,\imath\\\label{compare 3}
t_{j,\imath}-T_{j,\imath}&=& sp(j,\Pi';\Pi,\imath) \text{ for all } j,\imath\\\label{compare 4}
d_{k,\imath}-D_{k,\imath}&=&0 \text{ for all } k,\imath\\\label{compare 5}
e_{l,\imath}-E_{l,\imath}&=&0 \text{ for all } l,\imath\\ \label{compare 6}
f_{\imath}-F_{\imath}&=&0 \text{ for all } \imath 
\end{eqnarray}
We now calculate these exponents explicitly.\\\\ 
We first deal with the exponents in (\ref{left hand side}). For each $1\leq l\leq N-1-n'$, by Thm.\ \ref{CM}, Thm.\ \ref{n*1}, Thm.\ \ref{thm:artihap}, and Lem.\ \ref{Lemma CM} we have:
\begin{eqnarray}\nonumber
L^S(1,\Pi\otimes \chi_{l}^{\prime}\psi^{-1})&\sim_{E(\Pi)E_{\sf char}}& (2\pi i)^{dn}\prod\limits_{\imath \in \Sigma}
\big[P^{(I(\Pi,\chi'_{l}\psi^{-1})_{\imath})}(\Pi,\imath)
p(\widecheck{\chi'_{l}\psi^{-1}},\imath)^{I(\Pi,\chi'_{l}\psi^{-1})_{\imath}}p(\widecheck{\chi'_{l}\psi^{-1}},\overline{\imath})^{n-I(\Pi,\chi'_{l}\psi^{-1})_{\imath}}\big]\\
&\sim_{E(\Pi)E_{\sf char}}& (2\pi i)^{dn}\prod\limits_{\imath \in \Sigma}
\big[P^{(I(\Pi,\chi'_{l}\psi^{-1})_{\imath})}(\Pi,\imath)
p(\widecheck{\chi'_{l}},\imath)^{I(\Pi,\chi'_{l}\psi^{-1})_{\imath}}p(\widecheck{\chi'_{l}},\overline{\imath})^{n-I(\Pi,\chi'_{l}\psi^{-1})_{\imath}}\times \nonumber\\
&&p(\widecheck{\psi},\imath)^{-I(\Pi,\chi'_{l}\psi^{-1})_{\imath}}p(\widecheck{\psi},\overline{\imath})^{-n+I(\Pi,\chi'_{l}\psi^{-1})_{\imath}}\big]
\nonumber\\
&\sim_{E(\Pi)E_{\sf char}}& (2\pi i)^{dn}\prod\limits_{\imath \in \Sigma}
\big[P^{(I(\Pi,\chi'_{l}\psi^{-1})_{\imath})}(\Pi,\imath)
p(\widecheck{\chi'_{l}},\imath)^{2I(\Pi,\chi'_{l}\psi^{-1})_{\imath}-n}\times \nonumber\\
&&
p(\widecheck{\psi},\imath)^{-2I(\Pi,\chi'_{l}\psi^{-1})_{\imath}+n}(2\pi i)^{-n+I(\Pi,\chi'_{l}\psi^{-1})_{\imath}}\big]
\nonumber\\
&\sim_{E(\Pi)E_{\sf char}}& (2\pi i)^{dn/2-\sum\limits_{\imath\in \Sigma}(-2I(\Pi,\chi'_{l}\psi^{-1})_{\imath}+n)/2}\prod\limits_{\imath \in \Sigma}
\big[P^{(I(\Pi,\chi'_{l}\psi^{-1})_{\imath})}(\Pi,\imath)
p(\widecheck{\chi'_{l}},\imath)^{2I(\Pi,\chi'_{l}\psi^{-1})_{\imath}-n}\times\nonumber\\
&&
p(\widecheck{\psi},\imath)^{-2I(\Pi,\chi'_{l}\psi^{-1})_{\imath}+n}\big]
\nonumber.
\end{eqnarray}
Similarly, for each $1\leq k\leq N-n$, 
\begin{eqnarray}\nonumber
L^S(\tfrac{1}{2},\Pi'\otimes \chi_{k})&\sim_{E(\Pi')E_{\sf char}}& (2\pi i)^{dn'/2}\prod\limits_{\imath \in \Sigma}
\big[P^{(I(\Pi',\chi_{k})_{\imath})}(\Pi',\imath)
p(\widecheck{\chi_{k}},\imath)^{I(\Pi',\chi_{k})_{\imath}}p(\widecheck{\chi_{k}},\overline{\imath})^{n'-I(\Pi',\chi_{k})_{\imath}}\big]\\
&\sim_{E(\Pi')E_{\sf char}}& (2\pi i)^{dn'/2}\prod\limits_{\imath \in \Sigma}
\big[P^{(I(\Pi',\chi_{k})_{\imath})}(\Pi',\imath)
p(\widecheck{\chi_{k}},\imath)^{2I(\Pi',\chi_{k})_{\imath}-n'}\big]
\end{eqnarray}
Moreover, applying Thm.\ \ref{CM} and Lem.\ \ref{Lemma CM}, for each $1\leq k\leq N-n$ and $1\leq l\leq N-1-n'$ we get:
\begin{eqnarray}
L^S(1,\chi_{k} \chi_{l}^{\prime}\psi^{-1})&\sim_{E_{\sf char}}& (2\pi i)^{d}p(\widecheck{\chi_{k}\chi_{l}^{\prime}\psi^{-1}},\Psi_{\chi_{k} \chi_{l}^{\prime}\psi^{-1}})\nonumber\\
&\sim_{E_{\sf char}}&(2\pi i)^{d}p(\widecheck{\chi_{k}}, \Psi_{\chi_{k} \chi_{l}^{\prime}\psi^{-1}})p(\widecheck{\chi'_{l}},\Psi_{\chi_{k} \chi_{l}^{\prime}\psi^{-1}})p(\widecheck{\psi}, \Psi_{\chi_{k} \chi_{l}^{\prime}\psi^{-1}})^{-1}\nonumber\\
&\sim_{E_{\sf char}}& (2\pi i)^{d}\prod\limits_{\imath\in\Sigma}\big[
p(\widecheck{\chi_{k}}, \imath)^{\epsilon_{k,l,\imath}}p(\widecheck{\chi'_{l}},\imath)^{\epsilon_{k,l,\imath}}p(\widecheck{\psi}, \imath)^{-\epsilon_{k,l,\imath}}(2\pi i)^{(\epsilon_{k,l,\imath}-1)/2}\big]\nonumber\\
&\sim_{E_{\sf char}}& (2\pi i)^{d/2-(\sum\limits_{\imath\in \Sigma}-\epsilon_{k,l,\imath}/2)}\prod\limits_{\imath\in\Sigma}\big[
p(\widecheck{\chi_{k}}, \imath)^{\epsilon_{k,l,\imath}}p(\widecheck{\chi'_{l}},\imath)^{\epsilon_{k,l,\imath}}p(\widecheck{\psi}, \imath)^{-\epsilon_{k,l,\imath}}\big]
\nonumber
\end{eqnarray}
where $\epsilon_{k,l,\imath}=1$ if $\imath\in \Psi_{\chi_{k} \chi_{l}^{\prime}\psi^{-1}}$ and $\epsilon_{k,l,\imath}=-1$ otherwise. We observe immediately that 
\begin{eqnarray}\label{U observe}
F_{\imath}&=&-\sum\limits_{l=1}^{N-1-n'}E_{l,\imath},\label{U_imath}\\
C&=&(N-1-n')\frac{dn}{2}+(N-n)\frac{dn'}{2}+(N-n)(N-1-n')\frac{d}{2}-\sum\limits_{\imath\in \Sigma}\frac{F_{\imath}}{2}\nonumber\\
&=& \frac{dN(N-1)}{2}-\frac{dnn'}{2}-\sum\limits_{\imath\in \Sigma}\frac{F_{\imath}}{2}.\label{Cconstant}
\end{eqnarray}
We also have:
\begin{eqnarray}
S_{i,\imath}&=&\#\{l\mid I(\Pi,\chi_{l}'\psi^{-1})_{\imath}=i\}\\
T_{j,\imath}&=&\#\{k\mid I(\Pi',\chi_{k})_{\imath}=j\}\\
D_{k,\imath}&=& 2I(\Pi',\chi_{k})_{\imath}-n' +\sum\limits_{1\leq l\leq N-1-n'}\epsilon_{k,l,\imath}\\
E_{l,\imath}
 &=&2I(\Pi,\chi'_{l}\psi^{-1})_{\imath}-n+\sum\limits_{1\leq k\leq N-n}\epsilon_{k,l,\imath}
\end{eqnarray}
We now deal with the exponents in (\ref{right hand side}). By Thm.\ \ref{Asai thm}, 
\begin{eqnarray}
L^S(1,\Pi,{\rm As}^{(-1)^{n}})&\sim_{E(\Pi)}&(2\pi i)^{dn(n+1)/2}\prod\limits_{\imath \in \Sigma}\prod\limits_{0\leq i\leq n}P^{(i)}(\Pi,\imath);\nonumber\\
L^S(1,\Pi',{\rm As}^{(-1)^{n'}})&\sim_{E(\Pi')}&(2\pi i)^{dn'(n'+1)/2}\prod\limits_{\imath \in \Sigma}\prod\limits_{0\leq j\leq n'}P^{(j)}(\Pi',\imath)\nonumber.
\end{eqnarray}
Again by Thm.\ \ref{CM}, Thm.\ \ref{n*1}, Thm.\ \ref{thm:artihap}, and Lem.\ \ref{Lemma CM} we have

\begin{eqnarray}
L^S(1,\Pi\otimes \chi_{k}^{-1})&\sim_{E(\Pi)E_{\sf char}}& (2\pi i)^{dn}\prod\limits_{\imath\in \Sigma}
\Big[ P^{I(\Pi,\chi_{k}^{-1})_{\imath}}(\Pi,\imath)p(\widecheck{\chi_{k}},\imath)^{-I(\Pi,\chi_{k}^{-1})_{\imath}}p(\widecheck{\chi_{k}},\bar{\imath})^{I(\Pi,\chi_{k}^{-1})_{\imath}-n} \Big]\nonumber\\
&\sim_{E(\Pi)E_{\sf char}}& (2\pi i)^{dn}\prod\limits_{\imath\in \Sigma}
\Big[ P^{I(\Pi,\chi_{k}^{-1})_{\imath}}(\Pi,\imath)p(\widecheck{\chi_{k}},\imath)^{-2I(\Pi,\chi_{k}^{-1})_{\imath}+n} \Big]\nonumber
\end{eqnarray}

\begin{eqnarray}
L^S(\tfrac{1}{2},\Pi'\otimes (\chi'_{l})^{-1}\psi) &\sim_{E(\Pi')E_{\sf char}} & (2\pi i)^{dn'/2} 
\prod\limits_{\imath\in \Sigma}
\Big[ P^{I(\Pi',(\chi'_{l})^{-1}\psi)_{\imath}}(\Pi',\imath)p(\widecheck{\chi'_{l}},\imath)^{-I(\Pi',(\chi'_{l})^{-1}\psi)_{\imath}}\times \nonumber\\
&&p(\widecheck{\chi'_{l}},\bar{\imath})^{I(\Pi',(\chi'_{l})^{-1}\psi)_{\imath}-n'}p(\widecheck{\psi},\imath)^{I(\Pi',(\chi'_{l})^{-1}\psi)_{\imath}}p(\widecheck{\psi},\bar{\imath})^{n'-I(\Pi',(\chi'_{l})^{-1}\psi)_{\imath}}
 \Big]\nonumber\\
 &\sim_{E(\Pi')E_{\sf char}} & (2\pi i)^{dn'/2} 
\prod\limits_{\imath\in \Sigma}
\Big[ P^{I(\Pi',(\chi'_{l})^{-1}\psi)_{\imath}}(\Pi',\imath)p(\widecheck{\chi'_{l}},\imath)^{-2I(\Pi',(\chi'_{l})^{-1}\psi)_{\imath}+n'}\nonumber\times \\
&&p(\widecheck{\psi},\imath)^{2I(\Pi',(\chi'_{l})^{-1}\psi)_{\imath}-n'}(2\pi i)^{n'-I(\Pi',(\chi'_{l})^{-1}\psi)_{\imath}}
 \Big]\nonumber \\
 &\sim_{E(\Pi')E_{\sf char}} & (2\pi i)^{dn'-\sum\limits_{\imath\in\Sigma}(2I(\Pi',(\chi'_{l})^{-1}\psi)_{\imath}-n')/2} 
\prod\limits_{\imath\in \Sigma}
\Big[ P^{I(\Pi',(\chi'_{l})^{-1}\psi)_{\imath}}(\Pi',\imath)\nonumber\times \\
&&p(\widecheck{\chi'_{l}},\imath)^{-2I(\Pi',(\chi'_{l})^{-1}\psi)_{\imath}+n'}p(\widecheck{\psi},\imath)^{2I(\Pi',(\chi'_{l})^{-1}\psi)_{\imath}-n'}
 \Big]\nonumber 
 \end{eqnarray}
 
\begin{eqnarray}
 L^S(1,\chi_{k}\chi_{k'}^{-1})&\sim_{E_{\sf char}}& (2\pi i)^{d}p(\widecheck{\chi_{k}},\Psi_{\chi_{k}\chi_{k'}^{-1}}) p(\widecheck{\chi_{k'}},\Psi_{\chi_{k}\chi_{k'}^{-1}}) ^{-1} \nonumber\\
 &\sim_{E_{\sf char}}& (2\pi i)^{d}\prod\limits_{\imath\in \Sigma}
 p(\widecheck{\chi_{k}},\imath)^{\eta_{k,k',\imath}} p(\widecheck{\chi_{k'}},\imath) ^{-\eta_{k,k',\imath}} \nonumber\\\nonumber
  L^S(1,\chi'_{l}(\chi'_{l})^{-1})&\sim_{E_{\sf char}}& (2\pi i)^{d}p(\widecheck{\chi'_{l}},\Psi_{\chi'_{l}(\chi'_{l'})^{-1}}) p(\widecheck{\chi_{l'}},\Psi_{\chi'_{l}(\chi'_{l'})^{-1}}) ^{-1}\\&\sim_{E_{\sf char}}& (2\pi i)^{d}\prod\limits_{\imath\in \Sigma}p(\widecheck{\chi'_{l}},\imath)^{\xi_{l,l',\imath}} p(\widecheck{\chi_{l'}},\Psi_{\chi'_{l}(\chi'_{l'})^{-1}}) ^{-\xi_{l,l',\imath}} .\nonumber
\end{eqnarray}
where $\eta_{k,k',\imath}=1$, if $\imath\in \Psi_{\chi_{k}\chi_{k'}^{-1}}$ and $\eta_{k,k',\imath}=-1$ otherwise;  $\xi_{l,l',\imath}=1$, if $\imath\in \Psi_{\chi'_{l}(\chi'_{l'})^{-1}}$ and $\xi_{l,l',\imath}=-1$ otherwise. This gives
\begin{eqnarray}\label{u observe}
f_{\imath}&=&-\sum\limits_{l=1}^{N-1-n'}e_{l,\imath},\label{fff}\\
c&=& -\frac{dN(N+1)}{2}+d(N-n)+\frac{dn(n+1)}{2}+(N-n)dn+\frac{(N-n)(N-n-1)d}{2}+\nonumber\\
&&d(N-n'-1)+\frac{dn'(n'+1)}{2}+(N-m-1)dn'+\frac{(N-1-n')(N-2-n')d}{2}-\sum\limits_{\imath\in \Sigma}\frac{f_{\imath}}{2}\nonumber\\
&=& \frac{dN(N-1)}{2}-\sum\limits_{\imath\in \Sigma}\frac{f_{\imath}}{2}\label{cconstant}
\end{eqnarray}
We also obtain
\begin{eqnarray}
s_{i}&=& 1+\#\{k\mid I(\Pi,\chi_{k}^{-1})_{\imath}=i\} \\
t_{j}&=& 1+\#\{l\mid I(\Pi',(\chi'_{l})^{-1}\psi)_{\imath}=j\}\\
 d_{k,\imath}&=&-2I(\Pi,\chi_{k}^{-1})_{\imath}+n+\sum\limits_{k'>k}\eta_{k,k',\imath}-\sum\limits_{k'<k}\eta_{k',k,\imath}\\
 e_{l,\imath}&=&-2I(\Pi',(\chi'_{l})^{-1}\psi)_{\imath}+m+\sum\limits_{l'>l}\xi_{l,l',\imath}-\sum\limits_{l'<l}\xi_{l',l,\imath}.
\end{eqnarray}
Comparing (\ref{Cconstant}) with (\ref{cconstant}) and (\ref{fff}) with (\ref{U observe}), we see that Eq.\ (\ref{compare 5}) implies equations (\ref{compare 6}) and (\ref{compare 1}). Hence, it remains to prove the identities (\ref{compare 2}) -- (\ref{compare 5}), which are all local.\\\\
We hence fix an $\imath=\imath_{v}\in\Sigma$ and drop the subscript $\imath$ for simplicity. We write the infinity type of $\Pi$ (resp. $\Pi'$) at $v$ as $\{z^{a_{i}} \bar{z}^{-a_{i}}\}_{1\leq i\leq n}$ (resp. $\{z^{b_{j}} \bar{z}^{-b_{j}}\}_{1\leq j\leq n'}$) with $b_{j}$ strictly decreasing. For each $k$ (resp $l$), We write the infinity type of $\chi_{k}$ (resp. $\chi'_{l}$) at $v$ as $z^{x_{k}}\bar{z}^{-x_{k}}$ (resp. $z^{y_{l}}\bar{z}^{-y_{l}}$). \\\\
Then, the infinity type of $\tau$ (resp.\ $\tau'$) at $v$ is $\{z^{a_{i}} \bar{z}^{-a_{i}}\}_{1\leq i\leq n}\cup \{z^{x_{k}}\bar{z}^{-x_{k}}\}_{1\leq k\leq N-n}$ (resp.\ $\{z^{b_{j}} \bar{z}^{-b_{j}}\}_{1\leq j\leq n'}\cup \{z^{y_{l}-\tfrac{1}{2}}\bar{z}^{-y_{l}+\tfrac{1}{2}}\}_{1\leq l\leq N-1-n'}$). We define 
$$\mathcal{A}:=\{a_{i},x_{k}\mid 1\leq i\leq n, 1\leq k\leq N-n\}$$$$\text{and }\mathcal{B}:=\{b_{j},y_{l}-\tfrac{1}{2}\mid 1\leq j\leq n', 1\leq l\leq N-1-n'\}.$$
For each $l$, we note that $I(\Pi,\chi'_{l}\psi^{-1})=\#\{i \mid a_{i}+y_{l}-\tfrac{1}{2}<0\}=\#\{i \mid a_{i}<-y_{l}+\tfrac{1}{2}\}$. Consequently, for each $0\leq i\leq n$, 
 $I(\Pi,\chi'_{l}\psi^{-1})=n-i$ if and only if $a_{i}>-y_{l}+\tfrac{1}{2}>a_{i+1}$. Hence 
 \begin{equation}\label{S_n-j}S_{n-i}=\#\{l\mid a_{i}>-y_{l}+\tfrac{1}{2}>a_{i+1}\}.
 \end{equation}
Moreover, by definition of $\Psi_{\chi_{k} \chi_{l}^{\prime}\psi^{-1}}$ in Theorem \ref{CM} we have 

 $\epsilon_{k,l,\imath}=1$ if $x_{k}+y_{l}-\tfrac{1}{2}<0$, $=-1$ otherwise. Hence the coefficient

 \begin{eqnarray} E_{l}\nonumber
 &=&2I(\Pi,\chi'_{l}\psi^{-1})-n+\sum\limits_{1\leq k\leq N-n}\epsilon_{k,l}\\
 &=&2\#\{i \mid a_{i}<-y_{l}+\tfrac{1}{2}\}-n +\#\{k\mid x_{k}<-y_{l}+\tfrac{1}{2}\}-\#\{k\mid x_{k}>-y_{l}+\tfrac{1}{2}\}\nonumber\\
  &=&\#\{i \mid a_{i}<-y_{l}+\tfrac{1}{2}\}-\#\{i \mid a_{i}>-y_{l}+\tfrac{1}{2}\} +\#\{k\mid x_{k}<-y_{l}+\tfrac{1}{2}\}-\#\{k\mid x_{k}>-y_{l}+\tfrac{1}{2}\}\nonumber\\
  &=&\#\{A\in \mathcal{A} \mid A<-y_{l}+\tfrac{1}{2}\}-\#\{A\in \mathcal{A}  \mid A>-y_{l}+\tfrac{1}{2}\}. \label{F_l}
  \end{eqnarray}
 Similarly, we get 
 
\begin{eqnarray}
T_{n'-j}&=&\#\{k\mid b_{j}>-x_{k}>b_{j+1}\}; \label{T} \label{T_n'-j}\\
D_{k}&=&\#\{B\in \mathcal{B}\mid -B>x_{k}\}-\#\{B \in \mathcal{B} \mid -B<x_{k}\} .\label{D_k}
\label{D}
\end{eqnarray}
For each $k$, $I(\Pi,\chi_{k}^{-1})=\#\{i\mid a_{i}<x_{k}\}$. In particular, for each $0\leq i\leq n$, $I(\Pi,\chi_{k}^{-1})=n-i$ if and only if $a_{i}>x_{k}>a_{i+1}$. We get $s_{n-i}=1+\#\{k\mid a_{i}>x_{k}>a_{i+1}\}$. Hence,

\begin{eqnarray}
s_{n-i}-S_{n-i}&=&1+\#\{k\mid a_{i}>x_{k}>a_{i+1}\}-\#\{l\mid a_{i}>-y_{l}+\tfrac{1}{2}>a_{i+1}\}\nonumber\\
&=& 1+\#\{A\in \mathcal{A}\mid a_{i}>A>a_{i+1}\}-\#\{l\mid a_{i}>-y_{l}+\tfrac{1}{2}>a_{i+1}\}.
\end{eqnarray}
Since $\tau\otimes \tau'$ is in piano position, we know the pair $(\mathcal{A},\mathcal{B})$ gives rise to two strings of numbers, which satisfy the branching condition (cf.\ Lem.\ \ref{branching condition}). In particular, $1+\#\{A\in \mathcal{A}\mid a_{i}>A>a_{i+1}\}= \# \{ B\in \mathcal{B}\mid a_{i}>-B>a_{i+1}\}$. Therefore,

\begin{eqnarray}
s_{n-i}-S_{n-i}&=& \# \{ B\in \mathcal{B}\mid a_{i}>-B>a_{i+1}\}-\#\{l\mid a_{i}>-y_{l}+\tfrac{1}{2}>a_{i+1}\}\nonumber\\
&=& \#\{j\mid a_{i}>-b_{j}>a_{i+1}\}\nonumber\\
&=& sp(n-i,\Pi;\Pi')
\end{eqnarray}
by Definition \ref{split automorphic}.\\\\
The constant $d_{k}=-2I(\Pi,\chi_{k}^{-1})+n+\sum\limits_{k'>k}\eta_{k,k'}-\sum\limits_{k'<k}\eta_{k',k}$. Note that for $k< k'$, by definition $\eta_{k,k'}=1$ if and only if $\imath\in \Psi_{\chi_{k}\chi_{k'}^{-1}}$ which is equivalent to $x_{k}<x_{k'}$. In particular, 

\begin{eqnarray}&&\sum\limits_{k'>k}\eta_{k,k'}-\sum\limits_{k'<k}\eta_{k',k}\nonumber\\
&=&\#\{k'>k\mid x_{k}<x_{k'}\}-\#\{k'>k\mid x_{k}>x_{k'}\}-\#\{k'<k\mid x_{k}>x_{k'}\}+\#\{k'<k\mid x_{k}<x_{k'}\}\nonumber\\
&=&\#\{k'\neq k\mid x_{k}<x_{k'}\}-\#\{k'\neq k\mid x_{k}>x_{k'}\}\nonumber
\end{eqnarray}
Hence, 

\begin{eqnarray}d_{k}&=&-2\#\{i\mid a_{i}<x_{k}\}+n+\#\{k'\neq k\mid x_{k}<x_{k'}\}-\#\{k'\neq k\mid x_{k}>x_{k'}\}\nonumber\\
 &=&-\#\{i\mid a_{i}<x_{k}\}+\#\{i\mid a_{i}>x_{k}\}+\#\{k'\neq k\mid x_{k}<x_{k'}\}-\#\{k'\neq k\mid x_{k}>x_{k'}\}\nonumber\\
 &=&-\#\{ A\in \mathcal{A}\mid A<x_{k}\}+\#\{ A\in \mathcal{A}\mid A>x_{k}\}. \label{d_k}
 \end{eqnarray}
 Again by the branching-law, we know $\#\{ A\in \mathcal{A}\mid A<x_{k}\}=\#\{ B\in \mathcal{B}\mid -B<x_{k}\}$ and $\#\{ A\in \mathcal{A}\mid A>x_{k}\}=\#\{ B\in \mathcal{B}\mid -B>x_{k}\}$. Comparing with (\ref{D_k}) we obtain that $d_{k}=D_{k}$.\\\\
The proof that $t_{n-j}-T_{n-j}=sp(j,\Pi';\Pi,\imath)$ and $e_{l}=E_{l}$ for each $j$ and $l$ is in complete analogy to the above and left to the reader.\\\\
This shows the identities (\ref{compare 2}) -- (\ref{compare 5}) and hence, as we already observed, also (\ref{compare 6}) and (\ref{compare 1}):
\begin{equation*}
c-C=\left(\frac{dN(N-1)}{2}-\sum\limits_{\imath\in \Sigma}\frac{f_{\imath}}{2}\right)-\left(\frac{dN(N-1)}{2}-\frac{dnn'}{2}-\sum\limits_{\imath\in \Sigma}\frac{F_{\imath}}{2}\right)=\frac{dnn'}{2}.
\end{equation*}
Finally, comparing (\ref{left hand side}) and (\ref{right hand side}) we obtain that 
 \begin{equation*}
L^{S}(\tfrac{1}{2},\Pi\otimes \Pi') \sim_{E(\Pi)E(\Pi')E_{\sf char}} (2\pi i)^{dnn'/2} \prod\limits_{\imath \in \Sigma}\left(\prod\limits_{i=0}^{n}P^{(i)}(\Pi,\imath)^{sp(i,\Pi;\Pi',\imath)}\prod\limits_{j=0}^{n'}P^{(j)}(\Pi',\imath)^{sp(j,\Pi';\Pi,\imath)}\right).
\end{equation*}
Observe that, interpreted as families of complex numbers, both sides of this relation only depend on the embeddings of $E(\Pi)$ and $E(\Pi')$, so we can remove $E_{\sf char}$ from the relation by Lem.\ 1.34 in \cite{grob_lin}. This finishes the proof.

\end{proof}

\subsection{The automorphic variant of Deligne's conjecture: General $n$, $n'$ and $s_0$.}

We may now complete the proof of Conj.\ \ref{main conjecture} for the respective families of automorphic representations $\Pi$ and $\Pi'$, treated in Thm.\ \ref{automorphic Deligne near central} ($n\equiv n'$ mod $2$) and Thm.\ \ref{automorphic Deligne central} ($n\notequiv n'$ mod $2$), by extending the statements of the aforementioned two results to all critical values $s_0$ of $L(s,\Pi\times\Pi')$. This will be a direct application of the following theorem of Raghuram, cf.\ \cite{ragh19}, Thm.\ 110.(ii), which extends the results for totally real fields, dealt with in \cite{harder-ragh}, to general totally imaginary fields. We remark that the special case of $n'=n-1$ and $\Pi\otimes\Pi'$ in piano position has already been established by \cite{grob_lin}, Thm.\ 5.5, whereas for general even $n$ and odd $n'$, $n>n'$, the result may be found (under some additional hypotheses on $\Pi\otimes\Pi'$) as Cor.\ 4.4 in \cite{gro-sach}:

\begin{thm}\label{CM:ragh}
Let $n,n'\geq 1$ be integers and let $\Pi$ (resp.\ $\Pi'$) be a cohomological conjugate self-dual cuspidal automorphic representation of $GL_n(\A_F)$ (resp. $GL_{n'}(\A_F)$). The ratio of any consecutive critical values $s_0,s_0+1$ of $L(s,\Pi\times\Pi')$, such that $L^S(s_0+1,\Pi\times\Pi')\neq 0$, satisfies
$$\frac{L^S(s_0,\Pi\times\Pi')}{L^S(s_0+1,\Pi\times\Pi')}\sim_{E(\Pi) E(\Pi')} (2\pi i)^{dnn'}.$$
Interpreted as families, this relation is equivariant under the action of ${\rm Aut}(\C/F^{Gal})$. 
\end{thm}
This result enables us to extend Thm.\ \ref{automorphic Deligne near central} and Thm.\ \ref{automorphic Deligne central}, where the central (resp.\ near-central) critical point of $L(s,\Pi\times\Pi')$ was considered, to all critical points. We summarize this as our first main theorem:

\begin{thm} \label{automorphic Deligne general} 
Let $n,n'\geq 1$ be integers and let $\Pi$ (resp. $\Pi'$) be a cohomological conjugate self-dual cuspidal automorphic representation of $G_{n}(\Acm)$ (resp. $G_{n'}(\Acm)$), which satisfies Hyp.\ \ref{descent}. If $n\equiv n' \mod 2$, we assume that $\Pi$ and $\Pi'$ satisfy the conditions of Thm.\ \ref{automorphic Deligne near central}, i.e., that the isobaric sum $(\Pi\eta^n)\boxplus (\Pi'^{c}\eta^n)$ is $2$-regular and that either $\Pi$ and $\Pi'$ are both $5$-regular or $\Pi$ and $\Pi'$ are both regular and satisfy Conj.\ \ref{nonvan}. Whereas if $n\nequiv n' \mod 2$, we assume that $\Pi$ and $\Pi'$ satisfy the conditions of Thm.\ \ref{automorphic Deligne central}, i.e., we assume Conj.\ \ref{nonvan} and suppose that $\Pi_\infty$ is $(n-1)$-regular and $\Pi'_\infty$ is $(n'-1)$-regular.

Then the automorphic version of Deligne's conjecture, cf.\ Conj.\ \ref{main conjecture}, is true.
\end{thm}

\section{Proof of the factorization}\label{sect:proofThm61}

\subsection{Statement of the main theorem on factorization}\label{sect:thm}
We shall resume the notation from \S\ref{reviewH14}. In particular, we assume to have fixed a real embedding $\imath_{v_0}$ of $F^+$ and denote by $H=H_{I_0}$ the attached unitary group. Given a highest weight $\lambda$, we obtained $n$ cohomological discrete series representations $\pi_{\lambda,q}$, $0 \leq q \leq n-1$ of $H_\infty$, which were distinguished by the property that their $(\q,K_{H,\infty})$-cohomology is concentrated in degree $q$.\\\\
Now, let $\Pi$ be a cohomological conjugate self-dual cuspidal automorphic representation of $G_n(\A_F)$, which satisfies Hyp.\ \ref{descent}. For the same reason as in \S\ref{construction of motive}, we shall descend $\Pi^{{\sf v}}$ instead of $\Pi$. So, for each $q$ as above, we are given a cohomological tempered cuspidal automorphic representation $\pi(q)\in \prod(H,\Pi^{\sf v})$ with archimedean component $\pi_{\lambda,q}$. By Prop.\ \ref{h,K-coh} it has multiplicity one in the square-integrable automorphic spectrum. Finally, recall the number field $E(\pi(q))\supseteq E_Y(\eta)$ from \S\ref{sect:E(pi)} and let us abbreviate $E_q(\Pi):=E(\Pi) E(\pi(q))$. We are now ready to state our second main theorem:

\begin{thm}\label{auto-facto} 
Let $\Pi$ be a cohomological conjugate self-dual cuspidal automorphic representation of $G_n(\A_F)$, which satisfies Hyp.\ \ref{descent} and let ${\xi}_{\Pi}$ be its central character. We assume that $\Pi_\infty$ is $(n-1)$-regular. Let $\pi(q)\in \prod(H,\Pi^{\sf v})$ be a cohomological tempered cuspidal automorphic representation with archimedean component $\pi_{\lambda,q}$. Moreover we suppose that Conj.\ \ref{nonvan} (non-vanishing of twisted central critical values) and Conj.\ \ref{lvarch} (rationality of archimedean integrals) are valid. Then, for each $0\leq q\leq n-2$, 
$$Q(\pi(q)) \sim_{E_q(\Pi)}  p(\widecheck{\xi}_{\Pi},\Sigma)^{-1} \cfrac{P^{(q+1)}(\Pi,\imath_{v_0}) }{ P^{(q)}(\Pi,\imath_{v_0})}.$$
Interpreted as families, this relation is equivariant under the action of ${\rm Aut}(\C/F^{Gal})$. 
\end{thm}
Before we give a proof of Thm.\ \ref{auto-facto}, let us make several remarks and derive an important consequence.\\\ 
Firstly, this theorem establishes a version of the factorization of periods which was conjectured in \cite{H97}, see Conj.\ 2.8.3 and Cor.\ 2.8.5 {\it loc.\ cit.}. A proof of this conjecture (up to an unspecified product of archimedean factors) when $\cm=\mathcal K$ is imaginary quadratic was obtained in \cite{H07}, based on an elaborate argument involving the theta correspondence and under a certain regularity hypothesis. The more general argument, which we will give here, is much shorter and more efficient (but evidently depends on the hypotheses of Thm.\ \ref{auto-facto}).\\\\ 
Secondly, our theorem will imply the desired factorization, cf.\ \eqref{eq:factor}, of the local arithmetic automorphic periods $P^{(i)}(\Pi,\imath)$ as follows:

\begin{defn}\label{local period definition}
Let $\Pi$ and $\pi(q)$ be as in the previous theorem. We define
$$ P_{i}(\Pi,\imath):= \left\{ \begin{array}{rcl}
         P^{(0)}(\Pi,\imath)& \mbox{if} & i=0; \\
         Q(\pi(i-1)) \ p(\widecheck{\xi}_{\Pi},\Sigma) & \mbox{if}
         & 1\leq i\leq n-1; \\  
P^{(n)}(\Pi,\imath)\prod\limits_{i=0}^{n-1}P_{i}(\Pi,\imath)^{-1} & \mbox{if} & i=n.                \end{array}\right.$$
\end{defn}
Moreover, for any $0\leq i\leq n$, let $E^{(i)}(\Pi)$ be the compositum of the number fields $E_0(\Pi)$ and $E_q(\Pi)$, $q\leq i-1$. Then, 

\begin{thm}\label{main factorization}  
Under the hypotheses of Thm.\ \ref{auto-facto}, the Tate relation \eqref{eq:fundrel} is true. More previsely, we obtain the following factorization
\begin{equation}\label{Pfac}
P^{(i)}(\Pi,\imath) \sim_{E^{(i)}(\Pi)} P_{0}(\Pi,\imath)P_{1}(\Pi,\imath)\cdots P_{i}(\Pi,\imath)
\end{equation}
and in addition for each $i$ and $\imath$
\begin{equation}\label{eq:Tate}
P_{i}(\Pi,\imath)\sim_{E_{i}(\Pi)}Q_{i}(M(\Pi),\imath),
\end{equation}
for the motive $M(\Pi)$ attached to $\Pi$ as constructed in Thm.\ \ref{thm:clozel}. Interpreted as families, both relations are equivariant under the action of ${\rm Aut}(\C/F^{Gal})$. 
\end{thm}
\begin{proof}
Given Thm.\ \ref{auto-facto}, the factorization \eqref{Pfac} follows directly. We now prove \eqref{eq:Tate}: For $i=0$, by equation (\ref{local end}) we have $P^{(0)}(\Pi,\imath)\sim_{E(\Pi)} p(\widecheck{\xi}_{\Pi},\bar\imath) \sim_{E(\Pi)} p(\widecheck{\xi_{\Pi}^{c}},\imath)$. By Def.\ $3.1$ of \cite{harris-lin}, Eq.\ (2.12) of \cite{linorsay} and Eq.\ (6.13) of \cite{jie-thesis}, we know $$Q_{0}(M(\Pi),\imath) \sim_{E(\Pi)} (2\pi i)^{n(n-1)/2}\delta(M(\Pi),\imath)\sim_{E(\Pi)}\delta(M(\xi_{\Pi}),\imath)\sim_{E(\Pi)}  p(\widecheck{\xi_{\Pi}^{c}},\imath)$$ as expected.

For each $1\leq i\leq n-1$, by Rem.\ $3.5$ of \cite{H21} (see also Rem.\ \ref{Finfty}), we know 
$$Q_{i}(M(\Pi),\imath) \sim_{E_{i}(\Pi)} Q(\pi(i-1)) q(M(\Pi)),$$ 
where $q(M(\Pi))$ is the period defined in Lem.\ $4.9$ of \cite{grob-harr}. We see immediately from this lemma that $q(M(\Pi))\sim_{E(\Pi)}\prod\limits_{\imath\in \Sigma}((2\pi i)^{n(n-1)/2}\delta(M(\Pi),\imath))^{-1}$. Similarly as above we have 
$$ (2\pi i)^{n(n-1)/2}\delta(M(\Pi),\imath)\sim_{E(\Pi)}  p(\widecheck{\xi_{\Pi}^{c}},\imath).$$ 
Hence $q(M(\Pi))\sim_{E(\Pi)} \prod\limits_{\imath\in \Sigma} p(\widecheck{\xi_{\Pi}^{c}},\imath)^{-1}\sim_{E(\Pi)}  p(\widecheck{\xi_{\Pi}},\Sigma)$ and $Q_{i}(M(\Pi),\imath) \sim_{E_{i}(\Pi)} P_{i}(\Pi,\imath)$ as expected.

It remains to show that $\prod\limits_{i=0}^{n}Q_{i}(M(\Pi),\imath)\sim_{E(\Pi)} P^{(n)}(\Pi,\imath)$.  By Lem.\ $1.2.7$ of \cite{harrisANT} we have $\prod\limits_{i=1}^{n}Q_{i}(M(\Pi),\imath)\sim_{E(\Pi)}((2\pi i)^{n(n-1)/2}\delta(M(\Pi),\imath))^{-2}$. Hence, 
$$\prod\limits_{i=0}^{n}Q_{i}(M(\Pi),\imath)\sim_{E(\Pi)}((2\pi i)^{n(n-1)/2}\delta(M(\Pi),\imath))^{-1}\sim_{E(\Pi)}  p(\widecheck{\xi_{\Pi}^{c}},\imath)^{-1}\sim_{E(\Pi)} p(\widecheck{\xi_{\Pi}},\imath),$$ 
which is equivalent to $P^{(n)}(\Pi,\imath)$ by \eqref{local end}.

\end{proof}


\subsection{Proof of Thm.\ \ref{auto-facto}}
Our proof will proceed in several steps.

\subsubsection*{Step 1:}
Let us start off with the following 

\begin{obsv}\label{q=0}
Recall that $\pi_{\lambda,q}$ is holomorphic when $q=0$, cf.\ Lem.\ \ref{knownknown}. It thus follows directly from the definition that we have $Q(\pi(0))\sim_{E(\Pi) E(\pi(0))} P^{(I_{0})}(\Pi)$. Hence, by Thm.\ \ref{thm:artihap}, and Lem.\ \ref{Lemma CM},
\begin{eqnarray}
Q(\pi(0))&\sim_{E(\Pi) E(\pi(0))}& \left( \prod\limits_{\imath_v\neq \imath_{v_0}}P^{(0)}(\Pi,\imath_v)\right) P^{(1)}(\Pi,\imath_{v_0}) \nonumber \\
&\sim_{E(\Pi) E(\pi(0)) E_F(\widecheck{\xi}_{\Pi})}& \left( \prod\limits_{\imath_v\neq \imath_{v_0}}p(\widecheck{\xi}_{\Pi},\imath_v)^{-1}\right) P^{(1)}(\Pi,\imath_{v_0}) \nonumber\\
&\sim_{E(\Pi) E(\pi(0)) E_F(\widecheck{\xi}_{\Pi})}&\left( \prod\limits_{\imath_v\in \Sigma}p(\widecheck{\xi}_{\Pi},\imath_v)^{-1}\right) P^{(1)}(\Pi,\imath_{v_0})p(\widecheck{\xi},\bar{\imath}_{v_0})^{-1}\nonumber\\
&\sim_{E(\Pi) E(\pi(0)) E_F(\widecheck{\xi}_{\Pi})}& p(\widecheck{\xi}_{\Pi},\Sigma)^{-1}\cfrac{P^{(1)}(\Pi,\imath_{v_0})}{P^{(0)}(\Pi,\imath_{v_0})}.
\end{eqnarray}
However, by Lem.\ 1.34 in \cite{grob_lin}, we may reduce this relation to the smallest field containing $F^{Gal}$, on which all the quantities on both sides depend and remain well-defined. But this field is $E_0(\Pi)=E(\Pi) E(\pi(0))$. Therefore, Thm.\ \ref{auto-facto} is true when $q=0$. This is going to be used as the first step in our inductive argument.
\end{obsv}
Now, let $q$ be arbitrary. Then, in the notation of \S\ref{reviewH14}, the infinity type of $\Pi$ at $v$ is $\{z^{a_{v,i}} \bar{z}^{-a_{v,i}}\}_{1\leq i\leq n}$ where $a_{v,i}=-A_{v,n+1-i}$ (recall that we descend from $\Pi^{\sf v}$ rather than from $\Pi$ now). Next, let $\pi'(q)$ be the cohomological tempered cuspidal automorphic representation of $H'(\A_{F^+})$, constrcuted in Thm.\ \ref{BS}. By a direct calculation one gets that the $(\q',K_{H',\infty})$-cohomology of $\pi'(q)$ is non-vanishing only in degree $q':=n-q-2$.\\\\ 
Let $\Pi'=BC(\pi'(q)^{{\sf v}})$ be the base change of the contragredient of $\pi'(q)$. Then, the infinity type of $\Pi'$ at $v\in S_\infty$ is $\{z^{b_{v,j}} \bar{z}^{-b_{v,j}}\}_{1\leq j\leq n-1}$, with $b_{v,j}=A_{v,j}-\tfrac{1}{2}$ if either $v\neq v_{0}$, or $v=v_{0}$ and $j\neq q+1$, whereas $b_{v_{0},q+1}=A_{v_0,q+1}+\tfrac{1}{2}$. Hence, if we calculate the automorphic split indices of the pair $(\Pi,\Pi')$, cf.\ Def.\ \ref{split automorphic}, then we obtain

$$sp(i,\Pi;\Pi',\imath_v) =     \left\{ \begin{array}{rcl}
         1 & \mbox{if}
         & 1\leq i\leq n-1
 \\ 0  & \mbox{if} & i=0\text{ or }n
                \end{array}\right.;
 sp(j,\Pi';\Pi,\imath_v) =     \begin{array}{rcl}
         1 & \mbox{if}
         & 0\leq j\leq n-1
                \end{array}.
 $$
 at $v\neq v_{0}$, and
 $$sp(i,\Pi;\Pi',\imath_{v_0}) =     \left\{ \begin{array}{rcl}
         1 & \mbox{if}
         & 1\leq i\leq n-1,i\neq q, i\neq q+1
 \\ 0  & \mbox{if} & i=0, q+1 \text{ or }n\\
 2 & \mbox{if} & i=q
                \end{array}\right.;$$
$$sp(j,\Pi';\Pi,\imath_{v_0}) =   \left\{  \begin{array}{rcl}
         1 & \mbox{if}
         & 0\leq j\leq n-1, j\neq n-q-1, j\neq n-q-2\\
2 & \mbox{if}
         & j=n-q-2\\
0 & \mbox{if}
         & j=n-q-1
                \end{array}.\right.
 $$
at $v=v_{0}$. \\\\
We want to insert them into the formula provided by Thm.\ \ref{automorphic Deligne central}: As a first and obvious observation, it is clear by construction that, since $\Pi_\infty$ is $(n-1)$-regular, $\Pi'_\infty$ is $(n-2)$-regular. Combining Thm.\ \ref{BS}.(1) with Rem.\ \ref{rmk:des}, we also see that $\Pi'$ is a cuspidal automorphic representation, which satisfies Hyp.\ \ref{descent}. Therefore, $\Pi'$ satisfies the assumptions of Thm.\ \ref{automorphic Deligne central}, whence, inserting the values of the automorphic split indices from above into \eqref{eq:mainthm1}, we obtain

\begin{eqnarray}\label{RS L-value}
\nonumber
L^S(\tfrac{1}{2},\Pi\otimes \Pi')& \sim_{E(\Pi)E(\Pi')} & (2\pi i)^{dn(n-1)/2} \prod\limits_{\imath_v\in \Sigma}\left(\prod\limits_{1\leq i\leq n-1}P^{(i)}(\Pi,\imath_v) \prod\limits_{0\leq j\leq n-1}P^{(j)}(\Pi',\imath_v)\right)\times   \\ &&\cfrac{  P^{(q)}(\Pi,\imath_{v_0})P^{(n-q-2)}(\Pi',\imath_{v_0})    }{  P^{(q+1)}(\Pi,\imath_{v_0}) P^{(n-q-1)}(\Pi',\imath_{v_0})}.
 \end{eqnarray}
 The following observation is crucial for what follows: 
 
\begin{obsv}\label{q1}
$L^S(\tfrac{1}{2},\Pi\otimes \Pi')\neq 0$.
\end{obsv}
In order to see this, recall that by Thm.\ \ref{BS}.(2) there are factorizable cuspidal automorphic forms $f \in \pi(q)$, $f' \in \pi'(q)$, whose attached GGP-period does not vanish $\CP(f,f')\neq 0$. Hence, as all the local pairings $I^*_v(f_v,f'_v)$, cf.\ \S\ref{sect:GGPunit}, are convergent by the temperedness of $\pi(q)_v$ and $\pi'(q)_v$, it follows from the Ichino-Ikeda-N.Harris formula, Thm.\ \ref{conjecture II}, that necessarily
$$L^S(\tfrac{1}{2},BC(\pi(q))\otimes BC(\pi'(q)))\neq 0.$$
But since both $\Pi$ and $\Pi'$ are conjugate self-dual, we have
\begin{equation}\label{eq:l}
L^S(\tfrac{1}{2},BC(\pi(q))\otimes BC(\pi'(q)))=L^S(\tfrac{1}{2},\Pi^{{\sf v}}\otimes \Pi'^{{\sf v}}) =L^S(\tfrac{1}{2},\Pi^{c}\otimes \Pi'^{c}) = L^S(\tfrac{1}{2},\Pi\otimes \Pi').
\end{equation}
Therefore, indeed
$$L^S(\tfrac{1}{2},\Pi\otimes \Pi')\neq 0.$$

\subsubsection*{Step 2:}\label{analysis} 
We resume the notation from Step 1. Recall from the discussion below Thm.\ \ref{BS} that the factorizable cuspidal automorphic forms $f \in \pi(q)$, $f' \in \pi'(q)$ may be chosen such that, for all $v\in S_\infty$, $f_v$ (resp. $f'_v$) belongs to the $E(\pi(q))$- (resp. $E(\pi'(q))$-) rational subspaces of the minimal $K_{H,v}$-type of $\pi(q)_v$ (resp. $K_{H',v}$-type of $\pi'(q)_v$).\\\\
As in \S\ref{sect:GGPunit}, let $\xi$ be the Hecke character of $U(V_{1})(\A_{F^+})$ given by $\xi=(\xi_{ \pi'(q)} \xi_{\pi(q)})^{-1}$ and write $\pi''(q)=\pi'(q)\otimes \xi$. Let $f_{0}$ be a deRham-rational element of $\xi$. We define $f''=f'\otimes f_{0}$, a deRham-rational element in $\pi''(q)$.  Then, by Lem.\ \ref{add U1}, the GGP-period 
$$\CP(f,f'')=\cfrac{|I^{can}(f,f'')|^2}{\<f,f\>\ \<f'',f''\>}$$
satisfies
$$\CP(f,f')=\CP(f,f'').$$
Furthermore, by Thm.\ \ref{BS} and Thm.\ \ref{GPknownknown}, $I^{can}(f,f'')$ is a non-zero element of $E(\pi(q))E( \pi'(q))$. So, by our choice of $f$ and $f'$, Thm.\ \ref{conjecture II} and the very definition of the automorphic $Q$-periods attached to $\pi(q)$ and $\pi''(q)$, cf.\ \S\ref{Qperiod}, imply that
\begin{equation}\label{II1}
\begin{aligned}
\frac{1}{Q(\pi(q))Q(\pi''(q))} &\sim_{E(\pi(q)) E(\pi'(q))}   \Delta_{H} \frac{L^S(\tfrac{1}{2},\Pi^{\sf v}\otimes \Pi'^{\sf v})}{L^S(1,\Pi^{\sf v},{\rm As}^{(-1)^n})L^S(1,\Pi'^{\sf v},{\rm As}^{(-1)^{n-1}})} \ \prod_{v \in S_\infty} I^*_v(f_v,f'_v)  \\
&\sim_{E(\pi(q)) E(\pi'(q))}   (2\pi i)^{dn(n+1)/2} \frac{L^S(\tfrac{1}{2},\Pi\otimes \Pi')}{L^S(1,\Pi,{\rm As}^{(-1)^n})L^S(1,\Pi',{\rm As}^{(-1)^{n-1}})} \ \prod_{v \in S_\infty} I^*_v(f_v,f'_v).
\end{aligned}
\end{equation}
Here, we could remove the contragredient in the second line, as both $\Pi$ and $\Pi'$ are conjugate self-dual, whereas the replacement of $\Delta_H$ by a power of $2 \pi i$ is a consequence of (1.37) and (1.38) in \cite{grob_lin}, and the elimination of the local factors $I^*_v(f_v,f'_v)$ at the non-archimedean places follows from Lem.\ \ref{localIv}. At the archimedean places we make the following observation:

\begin{prop}\label{arch}  
Under the hypotheses of Thm.\ \ref{BS}, the local factors $I^*_v(f_v,f'_v) \neq 0$ for $v \in S_{\infty}$.
\end{prop}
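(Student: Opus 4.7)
The plan is to deduce the non-vanishing at archimedean places from the Ichino--Ikeda--N.~Harris formula, applied to the very vectors produced by Theorem~\ref{BS}. By construction (part (3) of Theorem~\ref{BS}) the vectors $f \in \pi(q)$, $f' \in \pi'(q)$ are factorizable in the sense required by Conjecture~\ref{conjecture II}, and satisfy $I^{can}(f,f') \neq 0$; in particular the Gross--Prasad period $\CP(f,f')$ is non-zero. On the other hand, since $\pi(q)$ and $\pi'(q)$ are tempered everywhere and $BC(\pi'(q))$ is cuspidal (and similarly for $\pi(q)$), all denominators appearing in $\CL^S(\pi,\pi')$ -- namely the Asai $L$-values $L^S(1,\Pi,As^{(-1)^n})$ and $L^S(1,\Pi',As^{(-1)^{n-1}})$ -- are finite and non-zero by the results of Shahidi.

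Under the hypotheses of Theorem~\ref{auto-facto} we are assuming Conjecture~\ref{conjecture II} for the pair $(\pi(q),\pi'(q))$, so we may write
\[
0 \neq \CP(f,f') \;=\; 2^{\beta}\,\Delta_{H}\,\CL^S(\pi,\pi')\,\prod_{v \in S} I^*_v(f_v,f'_v),
\]
where $S$ is chosen to contain $S_\infty$ together with all the ramified non-archimedean places. The archimedean factor $\Delta_H$ is a non-zero product of values of Hecke $L$-functions at positive integers. Consequently every factor appearing on the right-hand side must be non-zero. This already forces
\[
L^S(\tfrac{1}{2},\Pi \otimes \Pi') \neq 0 \quad\text{and}\quad I^*_v(f_v,f'_v) \neq 0 \text{ for every } v \in S.
\]
In particular the archimedean factors $I^*_v(f_v,f'_v)$ for $v \in S_\infty$ are all non-zero, as asserted.

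The main obstacle in carrying this out is verifying that one is genuinely entitled to apply Conjecture~\ref{conjecture II} to the pair $(\pi(q),\pi'(q))$: by Remark~\ref{rem:IIcompact} this is guaranteed once $\pi'(q)$ has a supercuspidal local component at some non-archimedean split place of $F^+$, and this is precisely the content of part (2) of Theorem~\ref{BS}. With that input secured, the argument is a clean combination of the non-vanishing of $I^{can}(f,f')$ from Theorem~\ref{BS}(3), the non-vanishing of the Asai $L$-values appearing in the denominator of $\CL^S(\pi,\pi')$, and the fact that a non-zero product of complex numbers has every factor non-zero.
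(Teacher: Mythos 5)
Your argument is correct and reaches the desired conclusion, but the route is worth comparing with what the paper most likely intends. You deduce the local non-vanishing from $\CP(f,f')\neq 0$ by \emph{applying the IINH formula} (Conjecture~\ref{conjecture II}) and observing that the other factors on its right-hand side --- $2^\beta$, $\Delta_H$, and the adjoint $L$-values $L^S(1,\pi,Ad)$, $L^S(1,\pi',Ad)$ --- are finite and non-zero, so each $I^*_v$ must be non-zero as well. This is a valid deduction and is harmless in context, since Conjecture~\ref{conjecture II} is anyway an assumption of Theorem~\ref{auto-facto}. However, the proposition as stated carries only the hypotheses of Theorem~\ref{BS} (i.e.\ Hypothesis~\ref{regw}), which do \emph{not} include the IINH conjecture; with your argument the proposition silently becomes conditional on Conjecture~\ref{conjecture II}.

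The standard unconditional route, which matches the paper's description of the deduction as ``immediate,'' avoids the global formula entirely: by local multiplicity one for the Gan--Gross--Prasad period (Aizenbud--Gourevitch--Rallis--Schiffmann at finite places, Sun--Zhu at archimedean places), one has $\Hom_{H'(\A_{F^+})}(\pi\otimes\pi',\C)\cong\bigotimes_v\Hom_{H'_v}(\pi_v\otimes\pi'_v,\C)$ with each local factor at most one-dimensional, so $I^{can}$ factors as $\otimes_v\ell_v$; then $I^{can}(f,f')\neq 0$ forces $\ell_v(f_v\otimes f'_v)\neq 0$ for every $v$. N.~Harris's local theory for tempered pairs (a Plancherel-type computation, which is a theorem, not part of the global conjecture) gives $I_v(f_v,f'_v)=c_v\,\lvert\ell_v(f_v\otimes f'_v)\rvert^2$ with $c_v>0$, hence $I^*_v(f_v,f'_v)\neq 0$. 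This keeps Proposition~\ref{arch} unconditional. Your version trades that for using a hypothesis that is in any case assumed in the main theorem; both give the stated conclusion, but you should be aware of the distinction and either cite the factorization argument or adjust the hypotheses of the proposition accordingly.
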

\begin{proof}  This is an immediate consequence of the non-vanishing of the global period $\CP(f,f')$.
\end{proof}
Hence, as we are admitting Conj.\ \ref{lvarch}, we obtain
\begin{equation}\label{II1b}
\begin{aligned}
\frac{1}{Q(\pi(q))Q(\pi''(q))} &\sim_{E(\pi(q)) E(\pi'(q))}   (2\pi i)^{dn(n+1)/2} \frac{L^S(\tfrac{1}{2},\Pi\otimes \Pi')}{L^S(1,\Pi,{\rm As}^{(-1)^n})L^S(1,\Pi',{\rm As}^{(-1)^{n-1}})}.\end{aligned}
\end{equation}

\subsubsection*{Step 3:}
We recall from Step 1 above that $\Pi'$ is a cohomological conjugate self-dual cuspidal automorphic representation of $G_{n-1}(\A_F)$, which satisfies Hyp.\ \ref{descent} and is $(n-2)$-regular. Hence, both $\Pi$ and $\Pi'$ satisfy the conditions of Thm.\ \ref{thm:artihap} and Thm.\ \ref{Asai thm}. As a consequence, combining the relations \eqref{local end 2}, \eqref{Asai L-value 1} and \eqref{RS L-value}, one gets 

\begin{equation}\label{II2}
(2\pi i)^{dn(n+1)/2}\frac{L^S(\tfrac{1}{2},\Pi\otimes \Pi')}{L^S(1,\Pi,{\rm As}^{(-1)^n})L^S(1,\Pi',{\rm As}^{(-1)^{n-1}})} \sim_{E(\Pi)E(\Pi')}\cfrac{  P^{(q)}(\Pi,\imath_{v_0})P^{(n-q-2)}(\Pi',\imath_{v_0})    }{  P^{(q+1)}(\Pi,\imath_{v_0}) P^{(n-q-1)}(\Pi',\imath_{v_0})}.
\end{equation}
Recall that $L^S(\tfrac{1}{2},\Pi\otimes \Pi')\neq 0$, cf.\ Obs.\ \ref{q1}. This allows us to combine \eqref{II1b} with \eqref{II2}, and so, using the fact that $Q(\pi''(q)) \sim_{E(\pi'(q))} Q(\pi'(q)) \cdot Q(\xi)$, we arrive at the following conclusion:

\begin{equation}\label{II5}
\frac{1}{Q(\pi(q))Q(\pi'(q))Q(\xi)} \sim_{E_q(\Pi) E_{q'}(\Pi')}   \  \cfrac{  P^{(q)}(\Pi,\imath_{v_0})P^{(n-q-2)}(\Pi',\imath_{v_0})    }{  P^{(q+1)}(\Pi,\imath_{v_0}) P^{(n-q-1)}(\Pi',\imath_{v_0})}
\end{equation}

\subsubsection*{Step 4:}

We need one last ingredient before we can complete the proof of Thm.\ \ref{auto-facto} by induction on the $F$-rank $n$:

\begin{lem} The following relation $$Q(\xi) \sim_{E_F(\xi)E_F(\widecheck{\xi}_{\Pi})E_F(\widecheck{\xi}_{\Pi'})}    p(\widecheck{\xi}_{\Pi},\Sigma)p(\widecheck{\xi}_{\Pi'}, \Sigma)$$
holds. Interpreted as families of complex numbers it is equivariant under ${\rm Aut}(\C)$.
\end{lem}

\begin{proof}

Recall that $U(V_{1})$ is the one-dimensional unitary group of signature $(1,0)$ at each $\imath\in \Sigma$. Let $T_1:=R_{F/\Q}(U(V_1))$. By definition of the CM-periods we have $Q(\xi)\sim_{E_F(\xi)} p(\xi, (T_1,h_{1}))$ where $h_{1}: R_{\C/\R}(\mathbb{G}_{m,\C})\rightarrow T_{1,\R}$ is the map, which sends $z$ to $z/\overline{z}$ at each $\imath\in\Sigma$. \\\\
We define a map $h_{\tilde{\Sigma}}: R_{\C/\R}(\mathbb{G}_{m,\C})\rightarrow T_{F,\R}$, where $T_{F}=R_{F/\Q}(\mathbb{G}_{m})$, by sending $z$ to $z/\overline{z}$ at each $\imath\in \Sigma$. The pair $(T_{F},h_{\tilde{\Sigma}})$ is then a Shimura datum. We extend $\xi$ to a character of $\Acm^{\times}$, still denoted by $\xi$. The natural inclusion $T_1\hookrightarrow T_{F}$ induces a map from the Shimura datum $(T_1,h_1)$ to $(T_{F},h_{\tilde{\Sigma}})$. By Prop.\ \ref{relation CM-period}, we have $p(\xi, (T_1,h_1))\sim_{E_F(\xi)} p(\xi, (T_{F},h_{\tilde{\Sigma}}))$.\\\\
Let $(T_{F},h_{\Sigma})$ and $(T_{F},h_{\overline{\Sigma}})$ be as in \S\ref{CM-periods}. Multiplication defines a map from $(T_{F}, h_{\tilde{\Sigma}})\times (T_{F},h_{\overline{\Sigma}})$ to $(T_{F}, h_{\Sigma})$. It follows from Prop.\ \ref{relation CM-period} (see also Prop.\ $1.4$ and Cor.\ $1.5$ of \cite{Harris93}), that we have 
$$
p(\xi, (T_{F},h_{\tilde{\Sigma}}))\sim_{E_F(\xi)} p(\xi,(T_{F},h_{\Sigma}))p(\xi,(T_{F},h_{\overline{\Sigma}}))^{-1}=p(\xi,\Sigma)p(\xi, \overline{\Sigma})^{-1}.$$
By Lem.\ \ref{Lemma CM},
$p(\xi,\Sigma)p(\xi, \overline{\Sigma})^{-1}\sim_{E_F(\xi)} p(\xi,\Sigma)p(\xi^{c,-1}, \Sigma)\sim_{E_F(\xi)} p(\xi/\xi^{c},\Sigma)$. Note that $\xi/\xi^{c}$ is the base change of the original $\xi$. Recall that $\Pi^{c}\cong \Pi^{{\sf v}}$ is the base change of $\pi(q)$. Hence $\widecheck{\xi}_{\Pi}$ is the base change of $\xi_{\pi(q)}^{-1}$. Similarly, $\widecheck{\xi}_{\Pi'}$ is the base change of $\xi_{\pi'(q)}^{-1}$. Therefore, $\xi/\xi^{c}=\widecheck{\xi}_{\Pi}\widecheck{\xi}_{\Pi}$. Consequently, recollecting all relations from above and invoking Lem.\ \ref{Lemma CM} once more, we get
\begin{equation}
Q(\xi)\sim_{E_F(\xi)} p(\widecheck{\xi}_{\Pi}\widecheck{\xi}_{\Pi'},\Sigma)\sim_{E_F(\xi)E_F(\widecheck{\xi}_{\Pi})E_F(\widecheck{\xi}_{\Pi'})} p(\widecheck{\xi}_{\Pi},\Sigma)p(\widecheck{\xi}_{\Pi'},\Sigma).
\end{equation}
\end{proof}
The previous Lem.\ and equation (\ref{II5}) now implies

\begin{equation}\label{II6}
Q(\pi(q))Q(\pi'(q)) \sim_{E_q(\Pi) E_{q'}(\Pi')}   \   \left( p(\widecheck{\xi}_{\Pi},\Sigma)^{-1}\cfrac{ P^{(q+1)}(\Pi,\imath_{v_0})}{P^{(q)}(\Pi,\imath_{v_0})}\right) \times  \left( p(\widecheck{\xi}_{\Pi'},\Sigma)^{-1} \cfrac{P^{(n-q-1)}(\Pi',\imath_{v_0})    }{   P^{(n-q-2)}(\Pi',\imath_{v_0})}\right)
\end{equation}
Here, we could remove the number field  $E_F(\xi)E_F(\widecheck{\xi}_{\Pi})E_F(\widecheck{\xi}_{\Pi'})$ from the relation using \cite{grob_lin}, Lem.\ 1.19.\\\\
We may finish the proof of Thm.\ \ref{auto-facto} by induction on $n$. When $n=2$, the integer $q$ is necessarily $0$. The theorem is then clear by Observation \ref{q=0}. We assume that the theorem is true for $n-1\geq 2$. Again, if $q=0$, then the theorem follows from Observation \ref{q=0}. So, let $1\leq q\leq n-2$. Recall that our representation $\pi'(q)$ from above is an element in $ \prod(H',\Pi'^{{\sf v}})$ whose $(\q',K_{H',\infty})$-cohomology in concentrated in degree $n-q-2\leq n-3$. Moreover, we have verified above that $\Pi'$ is $(n-2)$-regular and satisifes Hyp.\ \ref{descent}, whence $\Pi'$ and $\pi'(q)$ satisfy the conditions of Thm.\ \ref{auto-facto}. Hence $Q(\pi'(q)) \sim_{E_{q'}(\Pi')}   \    p(\widecheck{\xi}_{\Pi'},\Sigma)^{-1} \cfrac{P^{(n-q-1)}(\Pi',\imath_{v_0})    }{   P^{(n-q-2)}(\Pi',\imath_{v_0})}$. The theorem then follows from equation (\ref{II6}) and \cite{grob_lin}, Lem.\ 1.19.

\vskip 10pt

\footnotesize
{\sc Harald Grobner: Fakult\"at f\"ur Mathematik, University of Vienna\\ Oskar--Morgenstern--Platz 1\\ A-1090 Vienna, Austria.}
\\ {\it E-mail address:} {\tt harald.grobner@univie.ac.at}

\vskip 10pt

{\sc Michael Harris: Department of
Mathematics, Columbia University, New York, NY  10027, USA. }
\\ {\it E-mail address:} {\tt harris@math.columbia.edu}

\vskip 10pt

{\sc Jie Lin: Universit\"at Duisburg-Essen, Fakult\"at f\"ur Mathematik, \\ Mathematikcarr\'ee, Thea-Leymann-Strasse 9 \\
45127 Essen, Germany}
\\ {\it E-mail address:} {\tt jie.lin@uni-due.de}

\bigskip

\normalsize

\end{document}